\theoremstyle{plain}
\newtheorem{thm}{Theorem}[section]
\newtheorem{prop}[thm]{Proposition}
\theoremstyle{definition}
\newtheorem{defn}[thm]{Definition}
\newtheorem{rem}[thm]{Remark}
\newtheorem{ex}[thm]{Example}
\mathchardef\semic="303B
\newcommand{\swp}{\text{{\rm WP}}\,}
\newcommand{\diag}{\text{{\rm diag}}\,}
\newcommand{\emm}{\text{{\rm em}}\,}
\newcommand{\bdy}{\partial}
\newcommand{\dirac}{{\mathbf D}}
\newcommand{\del}{\delta}
\newcommand{\wedg}{\mathbin{\scriptstyle{\wedge}}}
\newcommand{\clp}{\mathbin{\scriptscriptstyle{\triangle}}}
\newcommand{\lctr}{\mathbin{\lrcorner}}
\newcommand{\inv}[1]{{\widehat{#1}}}
\newcommand{\rev}[1]{\overline{#1}}
\newcommand{\R}{{\mathbf R}}
\newcommand{\C}{{\mathbf C}}
\newcommand{\mH}{{\mathcal H}}
\DeclareMathOperator{\re}{Re}
\newcommand{\im}{\text{{\rm Im}}\,}
\newcommand{\sett}[2]{ \{ #1 \, \semic \, #2 \} }
\newcommand{\scl}[2]{\langle #1,#2 \rangle}
\newcommand{\clos}[1]{\overline{#1}}
\newcommand{\barint}{\mbox{$ave \int$}}
\newcommand{\divv}{{\text{{\rm div}}}}
\newcommand{\curl}{{\text{{\rm curl}}}}
\newcommand{\ta}{{\scriptstyle\top}}
\newcommand{\no}{{\scriptstyle\perp}}
\newcommand{\pd}{\partial}
\newcommand{\pv}{\text{{\rm p.v.\!}}}
\newcommand{\nindex}{{\overline n}}
\def\barint_#1{\mathchoice
            {\mathop{\vrule width 6pt
height 3 pt depth -2.5pt
                    \kern -8.8pt
\intop}\nolimits_{#1}}%
            {\mathop{\vrule width 5pt height
3 pt depth -2.6pt
                    \kern -6.5pt
\intop}\nolimits_{#1}}%
            {\mathop{\vrule width 5pt height
3 pt depth -2.6pt
                    \kern -6pt
\intop}\nolimits_{#1}}%
            {\mathop{\vrule width 5pt height
3 pt depth -2.6pt
          \kern -6pt \intop}\nolimits_{#1}}}
\definecolor{gr}{rgb}   {0.,   0.8,   0. }
\definecolor{bl}{rgb}   {0.,   0.5,   1. }
\definecolor{mg}{rgb}   {0.7,  0.,    0.7}
\begin{document}

\title[Dirac integral equations for dielectric and plasmonic scattering]
{Dirac integral equations for dielectric and plasmonic scattering}

\author[Johan Helsing]{Johan Helsing$\,^1$}
\author[Andreas Ros\'en]{Andreas Ros\'en$\,^2$}
\thanks{$^1\,$Centre for Mathematical Sciences, Lund University,
         Box 118, 221 00 Lund, Sweden. ORCID: 0000-0003-1434-9222}
\thanks{$^2\,$Mathematical Sciences, Chalmers University of Technology and University of Gothenburg, 
SE-412 96 G{\"o}teborg, Sweden. ORCID: 0000-0003-2393-1804.
Corresponding author: andreas.rosen@chalmers.se}




\begin{abstract}
  A new integral equation formulation is presented for the Maxwell
  transmission problem in Lipschitz domains. It builds on the Cauchy
  integral for the Dirac equation, is free from false eigenwavenumbers
  for  a wider range of permittivities than other known formulations, 
  can be used for magnetic
  materials, is applicable in both two and three dimensions, and does
  not suffer from any low-frequency breakdown. Numerical results for
  the two-dimensional version of the formulation, including examples
  featuring surface plasmon waves, demonstrate competitiveness
  relative to state-of-the-art integral formulations that are
  constrained to two dimensions.   However, our Dirac integral equation
  performs equally well in three dimensions, as demonstrated in
  a companion paper. 
\end{abstract}

\keywords{Maxwell scattering, Boundary integral equation, Spurious resonances,
Clifford-Cauchy integral, Surface plasmon wave, Non-smooth object, Nystr\"om discretization}
\subjclass[2010]{45E05, 78M15, 15A66, 65R20}

\maketitle

%
%
%
\section{Introduction}

This paper introduces a new boundary integral equation (BIE)
formulation for solving the time-harmonic Maxwell transmission problem
across interfaces between domains of constant and isotropic, but
otherwise general complex-valued, permittivities and permeabilities.
 The main novelty of our BIE is that it performs well for a very wide range
of materials, including metamaterials with negative permittivities.
We provide not only numerical evidence of such performance,
   but also rigorous proofs for general Lipschitz interfaces. 

We refer to our new formulation as a Dirac integral equation, since
 our point of departure  is to embed Maxwell's equations into a
Dirac-type equation by relaxing the constraints $\divv E=0$ and $\divv
B=0$ on the electric and magnetic fields $E$ and $B$ and introducing
two auxiliary Dirac variables, which make the partial differential
equation (PDE) elliptic also without these divergence-free
constraints. This means that our integral equation uses eight unknown
scalar densities in three dimensions (3D) and four unknown scalar
densities in two dimensions (2D), where eight and four, respectively,
are the dimensions of the Clifford algebra which is the algebraic
structure behind the Dirac equation and which we make use of in the analysis. 
 This idea to write Maxwell's equations as a Dirac equation
is old, and appears in the works of M. Riesz and D. Hestenes.
Even Maxwell himself formulated his equations using Hamilton's quaternions. 
Picard~\cite{PicardDir:84,PicardDir:85} elliptifies Maxwell's equations 
by embedding these into a Dirac equation, 
without using 
Clifford algebra but writing it as a div-curl-grad matrix system.  
This Picard's extended Maxwell system is the basis for more recent 
works \cite{TaskinenYlaOijala:06,TaskinenVanska:07,SchultzHiptmair:20}.

 In harmonic analysis of non-smooth boundary value problems for 
Maxwell's equations, BIEs based on the Dirac
equation were developed 
by M. Mitrea, McIntosh et al., see 
\cite {McIntoshMitrea:99, AxGrognardHoganMcIntosh:00}.
Building on this, the second author of the present paper further developed
the theory of Dirac integral equations for solving Maxwell's equations
 in his PhD work \cite{AxPhD:03,AxThesisPub1:04,AxThesisPub2:03,AxMcIntosh:04, AxThesisPub4:06} with
Alan McIntosh.
Applications of these methods to scattering from perfect electric conductors (PEC) are found in \cite{RosenSpin:17,RosenBoosting:19}, and the spin integral equations 
there are precursors of the Dirac integral equations presented here.
More recent results on Dirac equations for Maxwell scattering problems with
Lipschitz interfaces are also \cite{MarmMitreaShi:12,HernHerr:19}, which deal with
the $L_p$ boundary topology, but only treat the case of equal wave numbers
in the two domains.

The focus in the present paper is on BIEs which are numerically good
for a very wide range of material combinations, including such where
permittivity ratios are negative real numbers. In this case, the
relevant function space is the trace space for the $L_2$ topology in
the domains, since this is where surface plasmons waves may appear, as
we approach purely negative permittivity ratios. In particular, we seek
BIEs without false eigenwavenumbers in this plasmonic regime. In the
simpler cases of scattering from PEC objects or from dielectric
objects with less permissible restrictions on their permittivities,
and also on the genus of the domains, there are many good formulations
available. See, for example,~\cite{GaneshHawkinsVolkov:14} for a BIE
with six scalar unknowns containing only weakly singular integral
operators and extending results from the pioneering
work~\cite{TaskinenYlaOijala:06}. However, for metamaterials in
plasmonic scattering, we are only aware of two BIEs without false
eigenwavenumbers, namely the Dirac integral equations presented here
and the BIE in~\cite{HelsingKarlsson:20}. Both these works seem to
indicate that eight is the number of scalar densities needed to
construct a Maxwell BIE in 3D that is free from false eigenwavenumbers
for all complex-valued choices of material constants.  

Turning to the description of the present work, the main objective is to design the
integral equations so that not only the operator has good Fredholm
properties, but also so that no false eigenwavenumbers appear. This
problem may be best explained by the relation
\begin{displaymath}
  \text{PDE}\circ\text{Ansatz}=\text{BIE}\,.
\end{displaymath}
The PDE problem, Helmholtz or Maxwell, is to invert a map
$(F^+,F^-)\mapsto g$, where $g$ is boundary datum, and $F^\pm$ are the
solutions, the sought fields in the interior domain $\Omega^+$ and
exterior domain $\Omega^-$, respectively. To obtain a BIE we need to
choose an appropriate ansatz, that is an integral representation
$h\mapsto (F^+,F^-)$ of the fields in terms of a density function $h$
on the interface $\bdy\Omega$ between $\Omega^+$ and
$\Omega^-$. From this we obtain a BIE as the composed map $h\mapsto g$,
via $(F^+,F^-)$. A main objective is to use an invertible ansatz with
as good condition number as possible, so that the BIE is invertible
whenever the PDE problem is well posed.

Our main results, the Dirac integral equations for solving Maxwell's
equations in $\R^2$ as well as in $\R^3$, are formulated in
Section~\ref{sec:matrixform} and make use of ansatzes which are
invertible for all choices of material constants. At a more technical
level we make the following comments, where $k_\pm$ are wavenumbers in
$\Omega^\pm$, $\im k_\pm\ge 0$, and $\hat k=k_+/k_-$.

\begin{itemize}
\item Propositions~\ref{prop:helminj} and \ref{prop:maxwinj} show
  sufficient conditions for the Helmholtz and Maxwell transmission
  problems to have  uniqueness of  solutions. For non-magnetic materials, our
  result is the hexagonal region shown in Figure~\ref{fig:hexagon},
  which strictly contains the regions earlier obtained for the
  Helmholtz transmission problem in~\cite{KresRoac78,KleiMart88}. See,
  however, the proof of~\cite[Prop.~3.1]{HelsingKarlsson:20} for a comment on a minor flaw
  in~\cite{KresRoac78}.
\begin{figure}[h!]
\centering
\includegraphics[height=58mm]{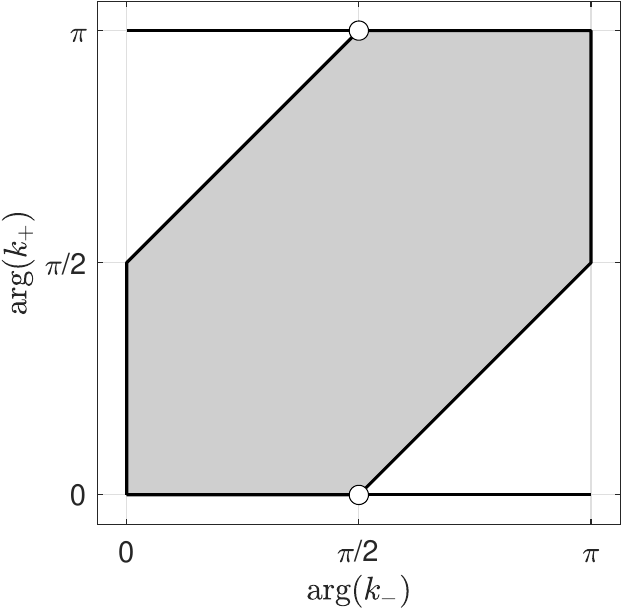}
\caption{\sf  The hexagonal region of $(\arg(k_-),\arg(k_+))$ where
the uniqueness condition from  Theorem~\ref{prop:helminj} holds,
in the case of non-magnetic materials.
For such, the condition reads $\hat k\in \swp(k_-,k_+)$,
with notation from Definition~\ref{defn:hexagon}. }
\label{fig:hexagon}
\end{figure}
When also the PDE problem defines a Fredholm map, something that can
fail in the hexagon only when $|\arg(k_+)-\arg(k_-)|=\pi/2$, then we
also have existence of solutions. Outside this region of existence and
uniqueness, and for a given $k_-$, the uniqueness of solutions fails
only for a discrete set of $k_+$.
  
\item The Dirac BIE, presented in Section~\ref{sec:matrixform}, is
  only one possible choice from a family of Dirac BIEs derived in
  Sections~\ref{sec:diracint2d} and \ref{sec:3DDirac}. In $\R^2$,
  these BIEs depend on four Dirac parameters $r,\beta,\alpha',\beta'$.
  The choice for $r$ is critical for numerical performance, whereas
  the precise choices for the other parameters seem to be of lesser
  importance as long as they are chosen so that no false
  eigenwavenumbers appear. In $\R^3$, there are two additional
  parameters $\gamma,\gamma'$.
  
  The Dirac BIE is constructed in Propositions~\ref{prop:diracintwp}
  and \ref{prop:diracintwp3d} using a dual PDE problem as ansatz. Our
  choice of dual PDE problem giving the Dirac BIEs in
  Section~\ref{sec:matrixform}, has to effect that these BIEs are
  invertible whenever the PDE problem is well posed.
  
\item Propositions~\ref{prop:helmexist} and \ref{prop:maxwexist} show
  that the PDE problem defines a Fredholm map provided that the
  quotient $\hat\epsilon$ of the permittivities is not in an interval
  $[-C, -1/C]$ for some $C\ge 1$ depending on the Lipschitz regularity
  of the interface. These estimates use Hodge potentials for the
  fields and concern the physical energy norm, corresponding to a
  boundary function space, $\mH_2$ or $\mH_3$, which roughly speaking
  is a suitable mix
  of the function space $H^{1/2}(\bdy\Omega)\subset L_2(\bdy\Omega)$
  of traces to Sobolev $H^1(\Omega)$ functions and its dual space
  $H^{-1/2}(\bdy\Omega)\supset L_2(\bdy\Omega)$.
   In the case of pure metamaterials $\hat\epsilon<0$, the PDE problem
  to be solved may fail to be a Fredholm map, and hence no 
  integral equation for solving it can be Fredholm. 
  However, when the PDE problem has unique solution we can
  solve it by an injective Dirac integral equation, and when Fredholmness 
  fails we solve the PDE problem numerically for smooth boundary data by
  taking a limit from $\im\hat\epsilon>0$.  
  Numerical experiments indicate the existence of this limit.
  Theoretically it should be possible to prove its existence 
  by analyzing the problem in
  slightly larger fractional Sobolev spaces. 
  
  More precisely, in  $\R^n$, $n\ge 2$,  one can show that the PDE problem
   \eqref{eq:Helmtransmprobl}  defines
  a Fredholm map when $(\hat\epsilon+1)/(\hat\epsilon-1)$ is
  outside the essential spectrum of the double layer potential
  operator
  \begin{equation}  
     K_{\rm d} f(x):=\int_{\bdy\Omega}\scl{\nu(y)}{(\nabla\Phi)(y-x)}f(y)d\sigma(y),
     \qquad x\in\bdy\Omega,
  \label{eq:doublelayer}
  \end{equation}
  where $\Phi$ is the Laplace fundamental solution and $\nu$ is the
  outward unit normal. 
   Another basic operator for the  Maxwell problem \eqref{eq:maxwtransprclassi}    
  in $\R^3$ is the magnetic dipole
  operator 
  \begin{equation}
     K_{\rm m} f(x):=
         \nu(x)\times\int_{\bdy\Omega}(\nabla\Phi)(y-x)\times f(y)d\sigma(y),
     \qquad x\in\bdy\Omega,
  \end{equation}
  acting on tangential vector fields $f$. 
  Compact perturbations of the operators $K_{\rm d}$ and $K_{\rm m}$,
  and their adjoints $K_{\rm d}^*$ and $K_{\rm m}^*$
   with respect to the standard $L_2$ pairing on $\bdy \Omega$, appear along the diagonals of our basic Cauchy
  integral operators \eqref{eq:Ek2D} and \eqref{eq:Ek3D}.  In
  particular, the (3:4,3:4) and (7:8,7:8) size
  $2\times 2$ diagonal blocks in \eqref{eq:Ek3D} are compact perturbations
  of $-K_{\rm m}^*$. 
   
  It is important to note that in the energy norm, the essential
  spectra of $K_{\rm d}$ and $K_{\rm m}$ are subsets of $(-1,1)$. If
  considering instead the $L_2(\bdy \Omega)$ norm, then on domains in $\R^2$
  with one corner, the essential spectrum of $K_{\rm d}$ is a lying
  ``figure eight'', centered around $0$. See Fabes, Jodeit and
  Lewis~\cite{FabesJodeitLewis:77} and the final comments and
  \eqref{eq:eight} in the present paper.
  
\item Although our main concern is wavenumbers $k_\pm\ne 0$, it is
  important to have a good behaviour of the BIEs as frequency
  $\omega\to 0$. Typical problems which can occur are dense-mesh
  low-frequency breakdown and topological low-frequency breakdown. We
  refer to \cite{VicGreFer18,EpsteinGreengardONeil:19} for a more
  detailed discussion and for BIEs that are immune to such
  low-frequency breakdown, and remark that this immunity is shared by
  the Dirac BIEs presented in Section~\ref{sec:matrixform}.
  One reason
  for this is that our four densities include the gradient of the
  field in $\R^2$ and our eight densities include the electric and the
  magnetic fields in $\R^3$, so that no numerical differentiation is
  needed. Moreover, if $k_\pm\to 0$ with $\hat k$ constant in
  \eqref{eq:2DDiracequ} or \eqref{eq:3DDiracequ}, then $P,P',N,N'$ are
  all constant whereas the Cauchy singular integral operators
  $E_{k_\pm}$ converge to $E_0$ in operator norm. 
   That the limit Dirac BIE does not have any false eigenwavenumbers,
  for fixed $0<|\hat k|<\infty$ as $k_\pm\to 0$, is shown in 
  \cite[Sec. 7]{HelsingKarlssonRosen:20}. 
  
   We remark that BIEs derived from the Picard system
   \cite{TaskinenYlaOijala:06,TaskinenVanska:07} do not suffer from
   dense-mesh low-frequency breakdown, but are known to have
   false/near-false eigenwavenumbers. See \cite[p.~163]{VicGreFer18}
   for an excellent survey of such ``charge-current formulations''. 
   Note that the Picard equation~\cite[eq.~(45)]{TaskinenVanska:07} is
   the Dirac equation~\eqref{eq:dirac} written in matrix form, and that 
   the BIEs \cite[eqs.~(43),(44)]{TaskinenVanska:07} are $8\times 8$ systems
   of the second kind with single and double potential type operator blocks. 
   Thus there is much formal resemblance with the Dirac BIE, but the
   real difference lies in the precise choice of parameters made in
   the present paper. 
\end{itemize}

The paper ends with Section~\ref{sec:numerical}, where the numerical
performance of the Dirac integral equations is studied in $\R^2$.
Among other things, the performance is compared to that of a
state-of-the-art system of integral equations of direct (Green's
theorem method) type~\cite{KleiMart88,HelsingKarlsson:20} which is
only applicable in $\R^2$, which involves only two unknown scalar
densities, and which for certain $k_\pm$ coincides with a 2D version
of the classic M{\"u}ller system\cite[p. 319]{Muller69}. Not
surprisingly this special-purpose system performs best, but the new
$4\times 4$ Dirac system is not far behind.
In particular it performs well under computationally challenging plasmonic conditions,
that is when $\hat\epsilon$ is negative and real.

 In the companion paper \cite{HelsingKarlssonRosen:20} by the authors 
joint with A. Karlsson,  we test the new $8\times 8$ Dirac system for
the Maxwell transmission problem in $\R^3$ using the numerical
techniques in~\cite{HelsingKarlsson:16}. All BIE systems that we are
aware of in the literature, with size less than $8\times 8$, exhibit
false eigenwavenumbers -- primarily at the left middle corner point in
Figure~\ref{fig:hexagon}. 
Our Theorem~\ref{thm:3Ddiracint} gives very weak sufficient conditions for our Dirac integral
equation \eqref{eq:3DDiracequ} not to have false eigenwavenumbers. When these conditions fail,
notably at the bottom middle corner point in
Figure~\ref{fig:hexagon}, 
it is still possible to avoid false
eigenwavenumbers with other choices of Dirac parameters in 
Section~\ref{sec:3DDirac}.

\section{Matrix Dirac integrals}   \label{sec:matrixform}

This section states, in classical vector and matrix notation, the
integral equations which we propose for solving the Maxwell
transmission problems. The derivation of these equations in later
sections makes use of multivector algebra. We consider a bounded
interior domain $\Omega^+$, separated by the interface $\bdy \Omega$
from the exterior domain $\Omega^-:= \R^n\setminus \clos{\Omega^+}$. Fix
a large $R<\infty$ and define
$\Omega^-_R:=\sett{x\in\Omega^-}{|x|<R}$. For simplicity we assume
throughout this paper that $\Omega^-$ is connected, although many
results do not need this assumption.

The parameters that we use for problem description in the transmission
problems \eqref{eq:Helmtransmprobl} and \eqref{eq:maxwtransprclassi},
that we aim to solve, are wavenumbers $k_+$ and $k_-$ in $\Omega^+$
and $\Omega^-$ respectively, and a jump parameter $\hat \epsilon$. We
assume that $\im k_\pm\ge 0$, but unless otherwise stated we do not
assume any relation between $k_+, k_-$ and $\hat \epsilon$. We denote
the ratio between the wavenumbers by $\hat k= k_+/k_-$. Our main
interest is in scattering for non-magnetic materials where
$\hat\epsilon= \hat k^2$. In applications, the parameter $\hat
\epsilon$ appears as the ratio $\hat \epsilon= \epsilon_+/\epsilon_-$
of the permittivities $\epsilon_\pm$ in $\Omega^\pm$. Similarly for
magnetic materials, we have a ratio $\hat \mu= \mu_+/\mu_-$ of the
permeabilities $\mu_\pm$ in $\Omega^\pm$. In this general case, we
have the relation $\hat\epsilon= \hat k^2/\hat \mu$.

We remark that in what follows, $E^\pm$ denotes the standard electric
fields, but we have rescaled the magnetic fields $B^\pm$ so that what
we here call $B^\pm$, in standard notation reads
$B^\pm/\sqrt{\epsilon_\pm\mu_\pm}$.

To formulate our results, we need the following conical subsets of $\C$.
We use the argument $-\pi<\arg(z)\le \pi$.
\begin{defn}  \label{defn:hexagon}
  Let $k_-, k_+\in\C\setminus\{0\}$ be such that $\im k_-\ge 0$ and $\im k_+\ge 0$.
  Define $\phi_\pm:= |\arg (k_\pm/i)|$.
  Let $\swp(k_-,k_+)\subset\C\setminus\{0\}$ be the set of
  $z\in\C\setminus\{0\}$ satisfying
\begin{equation}
\begin{cases}
  |\arg(z)|\le \pi-\phi_+-\phi_-,& \text{if } \phi_+<\pi/2,\phi_++\phi_->0,\\
  |\arg(z)|<\pi, &  \text{if } \phi_+=\phi_-=0,\\
  \min(|\arg(z)|,|\arg(-z)|)\le \pi/2-\phi_-, & \text{if } \phi_+=\pi/2, 0<\phi_-\le \pi/2,\\
    \re z\ne 0, & \text{if } \phi_+=\pi/2, \phi_-=0.
\end{cases}
\end{equation}
\end{defn}  
  
Our notation is to denote fields in the domains by $F, U,\ldots$, with
suitable superscript $\pm$, and to write $f,u,\ldots$ for respective
boundary traces. On $\bdy \Omega$ we denote by $\nu$ the unit normal
vector pointing into the exterior domain $\Omega^-$. Furthermore
$\{\nu,\tau\}$ and $\{\nu,\tau,\theta\}$ denote positive ON-frames on
$\bdy\Omega$ depending on dimension, so that $\tau$ is the
counter-clockwise tangent on curves $\bdy\Omega\subset\R^2$. On
surfaces $\bdy\Omega\subset\R^3$, the theory we develop works for any
choice of tangent ON-frame $\{\tau,\theta\}$. We denote the
directional derivative in direction $v$ by $\pd_v$, and with slight
abuse of notation , $\pd_\nu u$ denotes normal derivative on
$\bdy\Omega$ although this use values of $U$ in a neighbourhood of
$\bdy\Omega$.
  
Our main result for 2D scattering concerns the Helmholtz transmission
problem
\begin{equation}   \label{eq:Helmtransmprobl}
\begin{cases}
   u^+= u^-+u^0, & x\in\bdy \Omega, \\
   \pd_\nu u^+={\hat\epsilon}\pd_\nu (u^-+ u^0), & x\in\bdy \Omega,\\
    \Delta U^+ +k_+^2 U^+=0, &x\in\Omega^+,\\
   \Delta U^- +k_-^2 U^-=0, &x\in\Omega^-,\\
    \pd_{x/|x|}U^- -ik_- U^-=o(|x|^{-(n-1)/2}e^{\im k_- |x|}), & x\to\infty,
   \end{cases}
\end{equation}
where $\Omega^+\subset\R^2$, $n=2$, and with $u^0\in
H^{1/2}(\bdy\Omega)$ being the trace of an incoming wave $U^0$, and we
want to solve for $U^+\in H^1(\Omega^+)$ and $U^-\in H^1(\Omega^-_R)$.
 We remark that for $\im k_->0$, this not so well-known sufficient 
radiation condition 
of exponential growth entails a necessary condition of 
exponential decay. See \cite[Prop. 9.3.6]{RosenGMA:19} for proofs.

Let $\diag D$ denote the square diagonal matrix with diagonal $D$,
and let $A^{\rm T}$ denote the transpose of a matrix $A$.  
Define
\begin{align}  
  P &= \diag\begin{bmatrix}(\hat k+|\hat k|)^{-1/2} & (\hat k+|\hat k|)^{-1/2} & ({\hat\epsilon}+1)^{-1} & 1 \end{bmatrix}, \label{eq:2DPN1} \\
  P' &= \diag\begin{bmatrix}(\hat k+|\hat k|)^{-1/2} & (\hat k+|\hat k|)^{-1/2} &  1 & |\hat k|(\hat k+|\hat k|)^{-1}\end{bmatrix},\\
  N &= \diag\begin{bmatrix} \hat k(\hat k+|\hat k|)^{-1/2} & |\hat k|(\hat k+|\hat k|)^{-1/2} & {\hat\epsilon}({\hat\epsilon}+1)^{-1} & 1 \end{bmatrix},\\
   N' &= \diag\begin{bmatrix}|\hat k|(\hat k+|\hat k|)^{-1/2} & \hat k(\hat k+|\hat k|)^{-1/2} &  1 & \hat k(\hat k+|\hat k|)^{-1}\end{bmatrix}. \label{eq:2DPN4} 
\end{align} 

\begin{thm}   \label{thm:2Ddiracint}
  Let $\Omega^+\subset\R^2$ be a bounded Lipschitz domain, with
  $\Omega^-$ being connected and notation as above. Then there exists
  a constant $1\le C(\bdy\Omega)<\infty$ depending on the Lipschitz
  constants of the parametrizations of $\Omega^+$ by smooth domains,
  so the following holds.
  
  The transmission problem \eqref{eq:Helmtransmprobl} is well posed if
\begin{equation}  \label{eq:wpcond2D}
  {\hat\epsilon}\in \C\setminus [-C(\bdy \Omega), -1/C(\bdy \Omega)] \quad\text{and}\quad {\hat\epsilon} /\hat k\in \swp(k_-,k_+).
\end{equation}

For its solution, consider the Dirac integral equation
\begin{equation}  \label{eq:2DDiracequ}
  (I+PE_{k_+}N'-NE_{k_-}P')h= 2Nf^0
\end{equation}
for four scalar functions  $h=[h_1\;h_2\;h_3\;h_4]^{\rm T}$,  where $E_k$ is
the singular integral operator \eqref{eq:Ek2D} which we introduce in
Section~\ref{sec:cauchy}, $P,P', N, N'$ are the constant diagonal
matrices \eqref{eq:2DPN1}--\eqref{eq:2DPN4}, and 
\begin{equation}   \label{eq:helm2drhs}
f^0= \begin{bmatrix} ik_-u^0 & 0 & \pd_\nu u_0 & \pd_\tau u_0 \end{bmatrix}^{\rm T}. 
\end{equation} 
The operator in \eqref{eq:2DDiracequ} is invertible on the energy
trace space
$\mH_2$ from \eqref{eq:H2space}, introduced in Section~\ref{sec:diracint2d},
 whenever \eqref{eq:wpcond2D} holds and $\hat k\in \C\setminus(-\infty,0]$.
Moreover, the solution to \eqref{eq:Helmtransmprobl} in $\Omega^\pm$ is
 obtained
from $h^+= N' h$ and $h^-= P'h$ as
\begin{equation}   \label{eq:ufield2d}
 U^\pm= \tfrac 1{2i k_\pm}\begin{bmatrix} -\tilde K^{\nu'}_{k_\pm} &
 -\tilde K^{\tau'}_{k_\pm} & \tilde S^1_{k_\pm} & 0 \end{bmatrix} h^\pm
\end{equation}
and 
\begin{equation}   \label{eq:ugradfield2d}
 \nabla U^\pm= \tfrac 12\begin{bmatrix} \tilde S^{\nu'}_{k_\pm} &
  \tilde S^{\tau'}_{k_\pm} &
 -\tilde K^I_{k_\pm} & - \tilde K^J_{k_\pm} \end{bmatrix} h^\pm,
\end{equation} 
with notation as in Section~\ref{sec:cauchy}, 
where $\tilde K$ and $\tilde S$ denote layer potentials,
$\nu'$ and $\tau'$ denote the normal and tangential vector at the point
of integration $y\in\bdy \Omega$,
and $I=\begin{bmatrix} 1 & 0 \\ 0 & 1
\end{bmatrix}$ and $J=\begin{bmatrix} 0 & -1 \\ 1 & 0 \end{bmatrix}$. 
\end{thm}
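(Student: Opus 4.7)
The plan is to follow the scheme $\text{PDE}\circ\text{Ansatz}=\text{BIE}$ alluded to in the introduction and prove the three claims (well-posedness, invertibility of the BIE, and the recovery formulas) in sequence, with each step reduced as far as possible to the later structural results listed in the bullets above.

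First I would dispose of well-posedness of \eqref{eq:Helmtransmprobl} under \eqref{eq:wpcond2D}. Uniqueness is exactly what Proposition~\ref{prop:helminj} delivers: the assumption $\hat\epsilon/\hat k\in\swp(k_-,k_+)$ is, by Definition~\ref{defn:hexagon}, the hexagonal condition on the arguments of the wavenumbers, so a radiating pair $(U^+,U^-)$ with vanishing Cauchy data must be zero. Existence is obtained by pairing this uniqueness with the Fredholmness statement of Proposition~\ref{prop:helmexist}, which applies precisely because $\hat\epsilon\notin[-C(\bdy\Omega),-1/C(\bdy\Omega)]$; a Fredholm map of index zero that is injective is invertible, and this gives a bounded solution operator on the natural trace space $\mH_2$.

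Next I would assemble the BIE from the Cauchy integral $E_k$ on $\mH_2$ developed in Section~\ref{sec:cauchy}, following the ansatz strategy of Section~\ref{sec:diracint2d}. The four diagonal weights $P,P',N,N'$ in \eqref{eq:2DPN1}--\eqref{eq:2DPN4} are designed so that the pair $h^+=N'h$, $h^-=P'h$ provides the interior and exterior boundary Dirac data for a field built from the Cauchy extension of $h^\pm$; the weights on the third and fourth components encode exactly the jump $\hat\epsilon$ from \eqref{eq:Helmtransmprobl}, while the first two components encode the scaling needed to pass from the $k_-$ to the $k_+$ Dirac equation. The boundary jump relations for $E_k$ on $\mH_2$, combined with these weights, identify the composite operator $(I+PE_{k_+}N'-NE_{k_-}P')$ as exactly the map $h\mapsto 2Nf^0$, where $f^0$ is the Dirac boundary data attached to an incoming Helmholtz wave via \eqref{eq:helm2drhs}. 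Invertibility of this BIE on $\mH_2$ under \eqref{eq:wpcond2D} and $\hat k\in\C\setminus(-\infty,0]$ is then the content of the ansatz being an isomorphism: the dual-PDE construction of Proposition~\ref{prop:diracintwp} shows that $h\mapsto(F^+,F^-)$ is invertible (the extra exclusion of the negative real axis for $\hat k$ is what keeps $(\hat k+|\hat k|)^{-1/2}$ finite and makes the weights well defined), whereupon the factorization $\text{BIE}=\text{PDE}\circ\text{Ansatz}$ together with the well-posedness proved in the previous paragraph forces the BIE to be invertible.

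Finally, for the recovery formulas \eqref{eq:ufield2d} and \eqref{eq:ugradfield2d}, I would use that once $h$ solves the BIE, the vectors $h^+=N'h$ and $h^-=P'h$ are the interior and exterior Dirac traces of the Cauchy extensions, so that the scalar component and the gradient components of the associated Dirac field recover $U^\pm$ and $\nabla U^\pm$ through the single- and double-layer potentials $\tilde S^{\bullet}_{k_\pm}$ and $\tilde K^{\bullet}_{k_\pm}$ of Section~\ref{sec:cauchy}; the factor $\tfrac{1}{2ik_\pm}$ in \eqref{eq:ufield2d} is exactly the normalization relating the Dirac scalar to the Helmholtz potential. The main obstacle I expect to encounter is not any one of these three ingredients in isolation, but rather the bookkeeping that shows the weights $P,P',N,N'$ conjugate the Cauchy integral on $\mH_2$ into the correct interior/exterior Dirac data while simultaneously producing a BIE whose right-hand side coincides with $2Nf^0$; this is where the specific algebraic choice of Dirac parameters that was advertised in the introduction must be matched exactly to the dual PDE problem, and where the degeneration of $(\hat k+|\hat k|)^{-1/2}$ along $\hat k\in(-\infty,0]$ appears as the sharp obstruction to extending the result beyond the stated range.
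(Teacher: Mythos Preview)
Your proposal is correct and follows essentially the same route as the paper: well-posedness from Propositions~\ref{prop:helminj} and~\ref{prop:helmexist}, invertibility of the preconditioned Dirac operator via Proposition~\ref{prop:diracintwp}, and field recovery from the rows of the Cauchy integral \eqref{eq:Ek2D}. One small point to tighten: the ``PDE'' factor in your factorization $\text{BIE}=\text{PDE}\circ\text{Ansatz}$ is not the single Helmholtz problem of your first paragraph but the full Dirac transmission map $B_2$ of Proposition~\ref{prop:diractrans}, which couples \emph{two} Helmholtz problems with jump parameters $\alpha=\hat\epsilon$ and $\beta=\hat k/|\hat k|$; moreover Proposition~\ref{prop:diracintwp} already proves invertibility of the whole operator $rE_{k_+}^+M'+ME_{k_-}^-$ (both factors), and the link to \eqref{eq:2DDiracequ} is the preconditioning algebra $N=PM$, $N'=\hat kM'P'$, $P(\hat kM'+M)P'=I$ that the paper records explicitly.
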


\begin{proof}
  This result is derived in Sections~\ref{sec:helmwp} and
  \ref{sec:diracint2d}, where we arrive at equation
  \eqref{eq:2DfinalDirac} with $\alpha={\hat\epsilon}$. We
  precondition \eqref{eq:2DfinalDirac} by multiplying from the left by
  $P$ and writing $\tilde h= P' h$ to obtain \eqref{eq:2DDiracequ}
  with $N=PM$, $N'=\hat kM'P'$ and $P(\hat kM'+M)P'=I$. We remark that
  the parameter ratios $\hat k<0$ and ${\hat\epsilon}=-1$ must be
  excluded for $P$ and $P'$ to be well-defined. However, the only
  problem when $\hat k<0$ is that we cannot precondition
  \eqref{eq:2DfinalDirac} with $P$ and $P'$ to achieve a non-integral
  term $I$ in \eqref{eq:2DDiracequ}.
  
  In applications of the Dirac problem \eqref{eq:diractranspr} to the
  Helmholtz problem \eqref{eq:Helmtransmprobl}, as in
  Example~\ref{ex:TMMaxw}, we have $f^0= ik_- u^0 +\nabla u^0$, that
  is \eqref{eq:helm2drhs}. Equations (\ref{eq:ufield2d}) and
  (\ref{eq:ugradfield2d}) for the fields are seen from the first  row  and the
  last two rows in \eqref{eq:Ek2D} respectively, and the expressions
  for $h^\pm$ are seen from the right  and middle factors  in \eqref{eq:factors2d}.
  Note that $h^-$ happens to coincide with the density $\tilde h$,
  which is introduced in \eqref{eq:gendirint} in a different context.
\end{proof}

We remark that it is our experience that the precise choice of $P$ and $P'$
for Theorem~\ref{thm:2Ddiracint}, as well as for Theorem~\ref{thm:3Ddiracint},
is less important as long as they satisfy $P(\hat kM'+M)P'=I$.
With the choices made, our aim has been to minimize the growth of
$P$, $P'$, $N$ and $N'$ as we vary $|k|$.

We next formulate the analogous result for the Maxwell transmission
problem in $\R^3$. Our transmission problem is
\begin{equation}  \label{eq:maxwtransprclassi}
\begin{cases}
  \nu\times E^+= \nu\times (E^-+ E^0), & x\in \bdy\Omega,\\
   \nu\times B^+=(\hat k/\hat \epsilon) \nu\times (B^-+B^0), & x\in \bdy\Omega,\\
   \nabla\times E^+= ik_+ B^+, \nabla\times  B^+= -ik_+ E^+, &x\in\Omega^+,\\
   \nabla\times E^-= ik_- B^-, \nabla\times B^-= -ik_- E^-, &x\in\Omega^-,\\
   x/|x|\times E^- - B^-= o(|x|^{-1}e^{\im k_- |x|}), & x\to\infty,\\
      x/|x|\times B^- - E^-= o(|x|^{-1}e^{\im k_- |x|}), & x\to\infty,\\
   \end{cases}
\end{equation} 
where $E^0$ and $B^0$ are the incoming electric and magnetic fields,
and we want to solve for $E^\pm$ and $B^\pm$. All fields are assumed
to be $L_2$ integrable in a neighbourhood of $\bdy \Omega$.

Define matrices (where we write $\hat c=1/\hat k$ and $\hat \mu= \hat
k^2/\hat\epsilon$)
\begin{align}  
  P &= \diag\begin{bmatrix} \frac{1}{{\hat c}+1} & \frac 1{\sqrt{{\hat c}+|{\hat c}|}}
  & \frac {1}{{\hat\mu}+1}\frac 1{\sqrt {\hat c}} & \frac {1}{{\hat\mu}+1}\frac 1{\sqrt {\hat c}} & \frac {|{\hat c}|}{{\hat c}+ |{\hat c}|} & \frac {\hat \epsilon}{\hat\epsilon+1} & 1 & 1     \end{bmatrix}, \label{eq:3DPN1} \\
  P' &= \diag\begin{bmatrix} 1 & \frac 1{\sqrt{{\hat c}+|{\hat c}|}}
  & \frac 1{\sqrt {\hat c}} & \frac 1{\sqrt {\hat c}} & 1 & 1 & \frac 1{{\hat c}+ 1} & \frac 1{{\hat c}+ 1} 
     \end{bmatrix},  \\
  N &= \diag\begin{bmatrix} \frac{{\hat c}}{{\hat c}+1} & \frac {\hat c}{\sqrt{{\hat c}+|{\hat c}|}}
  & \frac {{\hat\mu}}{{\hat\mu}+1}{\sqrt {\hat c}} &  \frac {{\hat\mu}}{{\hat\mu}+1}{\sqrt {\hat c}} & \frac {{\hat c}}{{\hat c}+ |{\hat c}|}  & \frac {1}{{\hat\epsilon}+1} & 1 & 1    \end{bmatrix},  \\
  N' &= \diag\begin{bmatrix} 1 & \frac {|{\hat c}|}{\sqrt{{\hat c}+|{\hat c}|}}
  & \sqrt {\hat c} & \sqrt {\hat c} & 1  & 1 & \frac {\hat c}{{\hat c}+ 1} & \frac {\hat c}{{\hat c}+ 1} 
    \end{bmatrix}. \label{eq:3DPN4} 
\end{align}

\begin{thm}   \label{thm:3Ddiracint}
  Let $\Omega^+\subset\R^3$ be a bounded Lipschitz domain, with
  $\Omega^-$ being connected and notation as above. Then there exists
  a constant $1\le C(\bdy\Omega)<\infty$ depending on the Lipschitz
  constants of the parametrizations of $\Omega^+$ by smooth domains,
  so the following holds.
  
  The transmission problem \eqref{eq:maxwtransprclassi} is well posed if
\begin{equation}  \label{eq:wpcond3D}
  {\hat\epsilon}\in \C\setminus [-C(\bdy \Omega), -1/C(\bdy \Omega)] \quad\text{and}\quad {\hat\epsilon} /\hat k\in \swp(k_-,k_+).
\end{equation}

For its solution, consider the Dirac integral equation
\begin{equation}  \label{eq:3DDiracequ}
  (I+PE_{k_+}N'-NE_{k_-}P')h= 2Nf^0
\end{equation}
for eight scalar functions  $h=[h_1\; h_2\; h_3\; h_4\; h_5\; h_6\;
h_7\; h_8]^{\rm T}$,  where $E_k$ is the singular integral operator
\eqref{eq:Ek3D} which we introduce in Section~\ref{sec:cauchy}, $P,P',
N, N'$ are the constant diagonal matrices
\eqref{eq:3DPN1}--\eqref{eq:3DPN4}, and 
\begin{equation}  \label{eq:maxwrhs}
f^0= \begin{bmatrix}  0 & B^0_\nu & B^0_\tau & B^0_\theta & 0
& E^0_\nu & E^0_\tau & E^0_\theta  \end{bmatrix}^{\rm T},
\end{equation} 
with field components in the frame $\{\nu,\tau,\theta\}$. The operator
in \eqref{eq:3DDiracequ} is invertible on the energy trace space
$\mH_3$ from \eqref{eq:H3space}, introduced in
Section~\ref{sec:3DDirac}, whenever \eqref{eq:wpcond3D}, ${\hat
  k}^2/\hat\epsilon\in \C\setminus [-C(\bdy \Omega), -1/C(\bdy
\Omega)]$ and $\hat k\in \C\setminus(-\infty,0]$ holds. Moreover, the
solution to \eqref{eq:maxwtransprclassi} in $\Omega^\pm$ is obtained
from $h^+= N' h$ and $h^-= P'h$ as
\begin{equation}   \label{eq:Bfield3d}
 B^\pm=  \tfrac 12\begin{bmatrix} \tilde K_{k_\pm}^{\nu'\times\cdot} & -\tilde K_{k_\pm}^{I} & -\tilde K_{k_\pm}^{\theta'\times\cdot} & \tilde K_{k_\pm}^{\tau'\times\cdot} & \tilde S_{k_\pm}^{\nu'} & 0 & \tilde S_{k_\pm}^{\theta'} &-  \tilde S_{k_\pm}^{\tau'}  \end{bmatrix} h^\pm
\end{equation}
and 
\begin{equation} \label{eq:Efield3d}
E^\pm= \tfrac 12 \begin{bmatrix} \tilde S_{k_\pm}^{\nu'} & 0 & - \tilde S_{k_\pm}^{\theta'} &  \tilde S_{k_\pm}^{\tau'} &
- \tilde K_{k_\pm}^{\nu'\times\cdot} & -\tilde K_{k_\pm}^{I} & -\tilde K_{k_\pm}^{\theta'\times\cdot} & \tilde K_{k_\pm}^{\tau'\times\cdot} \end{bmatrix} h^\pm,
\end{equation} 
with notation as in Section~\ref{sec:cauchy}, 
where $\tilde K$ and $\tilde S$ denote layer potentials,
$\nu',\tau',\theta'$ denote the frame vectors at the point
of integration $y\in\bdy \Omega$, $I=\begin{bmatrix} 1 & 0 \\ 0 & 1
\end{bmatrix}$, and $v\times\cdot$ denotes the map
$x\mapsto v\times x$. 
\end{thm}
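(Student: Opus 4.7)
The plan is to follow the same three-step pattern used in the proof of Theorem~\ref{thm:2Ddiracint}, but lifted to the multivector setting of Section~\ref{sec:3DDirac}. First, well-posedness of the Maxwell transmission problem~\eqref{eq:maxwtransprclassi} under hypothesis~\eqref{eq:wpcond3D} is to be obtained by combining Propositions~\ref{prop:maxwinj} and~\ref{prop:maxwexist}: the uniqueness proposition disposes of the condition ${\hat\epsilon}/\hat k\in\swp(k_-,k_+)$ via Rellich-type identities together with the radiation condition at infinity, while the existence proposition gives Fredholmness of the resulting PDE map as long as ${\hat\epsilon}$ avoids the negative real interval $[-C(\bdy\Omega),-1/C(\bdy\Omega)]$ determined by the essential spectrum of the double-layer operator $K_{\rm d}$ and the magnetic-dipole operator $K_{\rm m}$.

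Next I would invoke Proposition~\ref{prop:diracintwp3d} to produce a raw 8-dimensional Dirac integral equation, the analogue of \eqref{eq:2DfinalDirac}, built from matrices $M$ and $M'$ encoding the Dirac parameters $r,\beta,\alpha',\beta',\gamma,\gamma'$ with the specialization $\alpha={\hat\epsilon}$ forced by the transmission condition. I would then precondition by writing $\tilde h=P'h$ and multiplying from the left by $P$, so that \eqref{eq:3DDiracequ} emerges with the identifications $N=PM$ and $N'=\hat c M'P'$. The central algebraic verification is the identity $P(M+\hat c M')P'=I$, which reduces to checking the eight diagonal entries one at a time against the explicit expressions \eqref{eq:3DPN1}--\eqref{eq:3DPN4}. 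The exclusion $\hat k\in\C\setminus(-\infty,0]$ is needed so that $\sqrt{\hat c+|\hat c|}$ does not vanish, and the further conditions ${\hat\epsilon}\ne -1$ and ${\hat\mu}\ne -1$ are needed so that the $(\cdot+1)^{-1}$ entries of $P,N$ are finite; when these hold, $P$ and $P'$ are invertible diagonal matrices and invertibility of \eqref{eq:3DDiracequ} on $\mH_3$ is inherited from the corresponding invertibility statement for the raw Dirac BIE.

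The field recovery formulas \eqref{eq:Bfield3d} and \eqref{eq:Efield3d} are to be read off from the block structure of the Cauchy integral operator \eqref{eq:Ek3D}: in the Clifford-algebraic grade decomposition, the vector-grade components of the Dirac field carry $E^\pm$ and $B^\pm$, while the scalar and bivector grades carry the auxiliary variables introduced by the elliptification. The precise rows of $E_{k_\pm}$ that extract $B^\pm$ and $E^\pm$ from $h^\pm=N'h$ or $h^\pm=P'h$ yield the tabulated entries of $\tilde K$ and $\tilde S$ in \eqref{eq:Bfield3d}--\eqref{eq:Efield3d}, exactly as in the 2D case where the first two and the last two rows of \eqref{eq:Ek2D} produced \eqref{eq:ufield2d} and \eqref{eq:ugradfield2d}. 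The right-hand side \eqref{eq:maxwrhs} arises by taking the trace of the incoming multivector field $F^0$ assembled from the free Maxwell data $(E^0,B^0)$; the two scalar slots vanish because $E^0$ and $B^0$ are divergence free, and the remaining entries are simply the components of $B^0$ and $E^0$ in the frame $\{\nu,\tau,\theta\}$.

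The hardest part is the bookkeeping for the preconditioning identity $P(M+\hat c M')P'=I$, since in 3D the matrices $M$ and $M'$ have substantially more nonzero entries than in 2D and must simultaneously accommodate the $\hat\epsilon$-jump in the electric sector and the $\hat\mu$-jump in the magnetic sector; the asymmetric normalizations $\sqrt{\hat c}$, $\sqrt{\hat c+|\hat c|}$, $(\hat c+1)^{-1}$, $({\hat\epsilon}+1)^{-1}$ and $({\hat\mu}+1)^{-1}$ in \eqref{eq:3DPN1}--\eqref{eq:3DPN4} have been engineered precisely to clear every entry on the diagonal while keeping $\|P\|,\|P'\|,\|N\|,\|N'\|$ uniformly bounded as $|k_\pm|$ varies. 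Once this block-by-block verification is complete, the rest of the argument is a mechanical assembly of results proved earlier in Sections~\ref{sec:cauchy} and~\ref{sec:3DDirac}.
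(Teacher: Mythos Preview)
Your outline is correct and follows the paper's own proof essentially step for step: well-posedness from Propositions~\ref{prop:maxwinj} and~\ref{prop:maxwexist}, the raw Dirac BIE \eqref{eq:3DfinalDirac} from Proposition~\ref{prop:diracintwp3d} with $\alpha=\hat\epsilon$, then the preconditioning $\tilde h=P'h$ and left-multiplication by $P$ giving $N=PM$, $N'=\hat k^{-1}M'P'$ and the diagonal identity $P(\hat k^{-1}M'+M)P'=I$, with the exclusions $\hat k\notin(-\infty,0]$, $\hat\epsilon\ne -1$, $\hat\mu\ne -1$ needed for $P,P'$ to be defined. One small slip to fix when you write it out: in the 3D multivector frame the electric field $E^\pm$ sits in the vector (grade~1) slots and $B^\pm$ in the bivector (grade~2) slots, while the auxiliary Dirac variables occupy the scalar (grade~0) and trivector (grade~3) slots---not scalar and bivector as you wrote---and the zeros in positions~1 and~5 of $f^0$ are there simply because the incoming EM field $F^0=E^0+*B^0$ has no grade-0 or grade-3 part, not because of divergence-freeness.
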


\begin{proof}
  This result is derived in Section~\ref{sec:3DDirac}, where we arrive
  at equation \eqref{eq:3DfinalDirac} with $\alpha=\hat\epsilon$. We
  precondition \eqref{eq:3DfinalDirac} by multiplying from the left by
  $P$ and writing $\tilde h= P' h$ to obtain \eqref{eq:3DDiracequ}
  with $N=PM$,  $N'=\hat k^{-1}M'P'$  and $P(\hat k^{-1}M'+M)P'=I$. We remark that
  the parameter ratios $\hat k<0$, $\hat\mu= {\hat
    k}^2/\hat\epsilon=-1$ and ${\hat\epsilon}=-1$ must be excluded for
  $P$ and $P'$ to be well-defined. However, the only problem when
  $\hat k<0$ is that we cannot precondition \eqref{eq:3DfinalDirac}
  with $P$ and $P'$ to achieve a non-integral term $I$ in
  \eqref{eq:3DDiracequ}.
  
  In applications of the Dirac problem \eqref{eq:diractranspr} to the
  Maxwell problem \eqref{eq:maxwtransprclassi}, or in multivector
  notation \eqref{eq:maxwtranspr}, we have $F^0= E^0 +B^0$, that is
  \eqref{eq:maxwrhs}. Equations (\ref{eq:Bfield3d}) and
  (\ref{eq:Efield3d}) for the fields are seen from  the second to fourth rows and last
  three rows  in \eqref{eq:Ek3D} respectively.
\end{proof}

\section{Multivector algebra}

This section  contains the basics of multivector
algebra which we need.  
For a complete account of the theory of
multivector algebra and Dirac equations which we use, we refer to
\cite{RosenGMA:19}.
 For our purposes,  multivectors are the same  objects  as Cartan's alternating
forms, and multivector fields are  the  same as differential forms, and
amounts to an algebra of not only the one-dimensional vectors but
also $j$-dimensional algebraic objects for $0\le j\le n$ in $\R^n$.
Concretely, multivectors in $\R^n$ are the following type of
objects, where our interest is in $n=2,3$.
Denote by $\{e_1,\ldots, e_n\}$ the standard vector basis for $\R^n$.
We write $\wedge\R^n$ for the complex $2^n$ dimensional space of
multivectors in $\R^n$, which is spanned by basis multivectors $e_s$,
where $s\subset\nindex= \{1,2,\ldots, n\}$. We write $\wedge^j\R^n$ for the
subspace spanned by  those $e_s$ with $j$ number of elements in the
index set $s$, so that
\begin{equation}
  \wedge\R^n= \wedge^0\R^n \oplus \wedge^1\R^n\oplus 
  \wedge^2\R^n\oplus\cdots \oplus \wedge^n\R^n.
\end{equation} 
We identify  $\wedge^0\R^n=\C$ and $\wedge^1\R^n=\C^n$ with 
the scalars and vectors respectively.
 Objects in $\wedge^2 \R^n$ are referred to as bivectors. 

In $\R^2$, a multivector is of the form
\begin{equation}
  w= a+ v_1e_1+v_2e_2 + be_{12},
\end{equation}
and so amounts to two scalars $a,b$ and a vector $v= v_1e_1+v_2e_2$.
In $\R^3$, a multivector is of the form
\begin{equation}
  w= a+ v_1e_1+v_2e_2+v_3e_3+ 
  u_1e_{23}+ u_2e_{13}+ u_3 e_{12}+ be_{123},
\end{equation}
and so amounts to two scalars $a,b$ and two vectors $v= v_1e_1+v_2e_2+v_3e_3$
and
$u= u_1e_1+u_2e_2+u_3e_3$ via Hodge duality  (see below). 

On this $2^n$ dimensional space $\wedge\R^n$ we use three products
which, depending on dimension, generalize the classical vector operations:  the exterior product $u\wedg w$,
the (left) interior product $u\lctr w$ and the Clifford product $uw$.
 The last is the standard short hand notation for the Clifford product, whereas
the notation $e_s\clp e_t$ was introduced in \cite{RosenGMA:19}, for reasons
which are clear from \eqref{eq:defclp}. 
 Products of basis multivectors $e_s$
correspond to signed unions, set differences and symmetric differences of index sets
$s$ respectively. 
More precisely, for index sets $s,t\subset \nindex$ we define the permutation sign
$\epsilon(s,t)= (-1)^{|\sett{(s_i,t_j)\in s\times t}{s_i>t_j}|}$,
that is the sign of the permutation that rearranges $s\cup t$ in increasing order.
Then 
\begin{align}
  e_s\wedg e_t &=
  \begin{cases} 
  \epsilon(s,t) e_{s\cup t}, & s\cap t=\emptyset,\\
  0, & s\cap t\ne \emptyset,
  \end{cases}\\
  e_s\lctr e_t &=
  \begin{cases} 
  0, & s\not\subseteq t,\\
  \epsilon(s,t\setminus s) e_{t\setminus s}, & s\subseteq t,
  \end{cases}\\
  e_se_t &=   \epsilon(s,t) e_{s\triangle t},  \label{eq:defclp}
\end{align}
 where $s\triangle t= (s\cup t)\setminus (s\cap t)$ is the symmetric difference of sets. 
The exterior and Clifford products are associative, but the interior product is
not.
On the one hand, orthogonal vectors in $\wedge^1\R^n$ anticommute both with respect to the 
exterior and Clifford products. 
On the other hand,  $e_j\wedg e_j=0$ whereas $e_je_j=1$, $j=1,\ldots, n$. 
An important special case of the interior product is the Hodge star duality
\begin{equation}
   {*w}= w\lctr e_\nindex.
\end{equation}
The most important instance of the above three products is when the first factor
is a vector, and not a general multivector. In this case, the three products are related
as
\begin{equation}  
   uw= u\lctr w+u\wedg w,\qquad u\in \wedge^1 \R^n, w\in \wedge \R^n.
\end{equation} 
A multivector $w\in \wedge \R^n$ at a point $x\in \bdy \Omega$, with
normal vector $\nu\in \wedge^1 \R^n$, is called {\em tangential}
if $\nu\lctr w=0$ and is called {\em normal} if $\nu\wedg w=0$. 
 
 \begin{ex}[$\R^2$ multivector algebra]   \label{ex:R2prods}
A general multivector in $\R^2$ is of the form
$w= a_1+v+*a_2$, where $a_1,a_2\in\wedge^0\R^2$ and $v\in\wedge^1\R^2$.
The three basic multivector products with a vector $u\in\wedge^1\R^2$ can be written 
\begin{align}
  u\wedg w &= 0 + a_1u+ *((*u)\cdot v),\\
  u\lctr w &= u\cdot v + a_2 (*u)+ 0,\\
  u w&= u\cdot v+(a_1 u+a_2 (*u))+ *((*u)\cdot v).
\end{align} 
Note that on vectors $u\in\wedge^1\R^2$, the Hodge star $*u$
is counter clockwise rotation $\pi/2$. 
\end{ex} 

\begin{ex}[$\R^3$ multivector algebra]   \label{ex:R3prods}
A general multivector in $\R^3$ is of the form
$w= a_1+v_1+*v_2+*a_2$, where $a_1,a_2\in\wedge^0\R^3$ and $v_1,v_2\in\wedge^1\R^3$.
The three basic multivector products with a vector $u\in\wedge^1\R^3$ can be written
\begin{align}
  u\wedg w &= 0 + ua_1+ *(u\times v_1)+ *(u\cdot v_2),\\
  u\lctr w &= u\cdot v_1 - u\times v_2+ *(u a_2)+ 0,\\
  u w&= u\cdot v_1+(ua_1-u\times v_2)+ *(u\times v_1+u a_2)+ *(u\cdot v_2).
\end{align}
\end{ex}
We refer to \cite[Chapters 2,3]{RosenGMA:19} for further details of 
multivector algebra.

In the same way that the scalar and vector products induce the 
divergence $\nabla\cdot F$ and curl $\nabla\times F$ in vector calculus,
the exterior, interior and Clifford products induce the exterior, 
 interior and Clifford/Dirac derivatives in multivector calculus,
 and we write these as
 \begin{align}
    dF(x)=\nabla\wedg F(x) &= \sum_{j=1}^n e_j \wedg \pd_{e_j} F(x),\\
    \del F(x)= \nabla\lctr F(x)  &= \sum_{j=1}^n e_j \lctr \pd_{e_j} F(x),\\
    \dirac F(x)= \nabla\clp F(x) &= \sum_{j=1}^n e_j \pd_{e_j} F(x).
 \end{align}
Replacing $u$ and $w$ by $\nabla$ and $F(x)$ in Example~\ref{ex:R3prods},
it follows how these differential operators can be written in terms of 
divergence, curl and gradient in $\R^3$.
We refer to \cite[Chapters 7,8]{RosenGMA:19} for further details of multivector 
calculus. 

The time-harmonic wave Dirac equation which we consider in this paper is
\begin{equation}  \label{eq:dirac}
  \dirac F(x)= ik F(x),
\end{equation}
for multivector fields $F$, and wave number $k\in\C$. 
The main applications are the Helmholtz and Maxwell
equations. 
Given a scalar solution $U$ to the Helmholtz equation $\Delta U+ k^2 U=0$, we define
the multivector field 
\begin{equation}   \label{eq:helmgrad}
F_{\text{helm}}= \nabla U+ ik U.
\end{equation}  
It follows that $F=F_{\text{helm}}$ satisfies \eqref{eq:dirac} since
$\dirac^2=\Delta$.

For Maxwell's equations,  define the total electromagnetic (EM) field to be the
multivector field
\begin{equation}   \label{eq:totalEM}
  F_\emm(x)= E_1(x)e_1+E_2(x)e_2+E_3(x)e_3+
  B_1(x)e_{23}  - B_2(x)e_{13}  + B_3(x) e_{12},
\end{equation} 
where $E$ is the electric field and $B$ is the magnetic field, and the
$\wedge^0\R^3$ and $\wedge^3\R^3$ parts are zero. In this formalism,
the Dirac equation \eqref{eq:dirac} for $F=F_\emm$ given by
\eqref{eq:totalEM} coincides with the time-harmonic Maxwell's
equations,  that is the PDE from \eqref{eq:maxwtransprclassi}.  Indeed, the $\wedge^0\R^3$, $\wedge^1\R^3$, $\wedge^2\R^3$
and $\wedge^3\R^3$ parts of \eqref{eq:dirac} are the Gauss,
Maxwell--Amp\`ere, Faraday and magnetic Gauss law respectively. See
\cite[Sec. 9.2]{RosenGMA:19}.

\section{The $\R^n$ Cauchy integral}  \label{sec:cauchy}

A main reason for using the Dirac framework is that it provides us
with a Cauchy-type reproducing formula, which allows for a
generalization of complex function theory to $n\ge 2$ and $k\ne 0$.
See \cite[Chapters 8,9]{RosenGMA:19} for further details. More
precisely, if $F$ satisfies \eqref{eq:dirac} in a domain $\Omega$ with
boundary $\bdy\Omega$, then a Cauchy-type reproducing formula
\begin{equation}  
  F(x)=\int_{\bdy\Omega}\Psi_k (y-x)\nu(y)f(y)d\sigma(y),\qquad x\in\Omega,
\label{eq:fieldCauchy}
\end{equation}
holds.
We write $dy$ and $\nu\in\wedge^1\R^n$ for the standard measure and outward pointing unit normal on $\bdy \Omega$ respectively,
and the integrand uses two Clifford products.
The first factor
\begin{equation}  \label{eq:dirfundsol}
  \Psi_k(x)= -\tfrac 12(\nabla \Phi_k(x)-ik\Phi_k(x))\in \wedge^1\R^n\oplus \wedge^0\R^n\subset\wedge\R^n
\end{equation}
is a fundamental solution to the elliptic operator $\dirac+ik$,
as $\dirac \Psi_k(x)+ik\Psi_k(x)= \delta(x)$.
Here $\Phi_k(x)\in\wedge^0\R^n=\C$ is the Helmholtz fundamental solution, and we use the normalization 
from \cite{ColtonKress:83} so that $(\Delta+k^2)\Phi_k= -2\delta(x)$. Hence the factor $-1/2$ in \eqref{eq:dirfundsol}.
In dimension $n=3$ we have 
\begin{equation}   \label{eq:phik3d}
  \Phi_k(x)= (2\pi|x|)^{-1}e^{ik|x|},
\end{equation}
and in dimension $n=2$ we have in terms of the 
Hankel function $H^{(1)}_0(z)$ that
\begin{equation}  \label{eq:phik2d}
  \Phi_k(x)= (i/2) H^{(1)}_0(k|x|).
\end{equation}

The classical theory of complex Hardy spaces generalizes from complex
function theory to our Dirac setting. Our basic operator, acting on a
suitable space $\mH$ of functions $h: \bdy \Omega\to\wedge\R^n$ on
$\bdy \Omega$, is the Cauchy principal value integral
\begin{equation}   \label{eq:cliffcauchy}
  E_k h(x)= 2\pv \int_{\bdy \Omega} \Psi_k (y-x)\nu(y) h(y) d\sigma(y),\qquad x\in\bdy \Omega,
\end{equation}
which reduces to the classical Cauchy integral when $n=2$, $k=0$.
The basic operator algebra is that $E_k^2=I$, and 
\begin{equation}   \label{eq:Ekpm}
  E_k^\pm =(I\pm E_k)/2
\end{equation}
are two complementary projection operators, that is $(E_k^\pm)^2=
E_k^\pm$. The operator $E_k^\pm$ projects onto its range, the subspace
of $\mH$ which we denote $E_k^\pm\mH$ and consists of all traces
$F|_{\bdy \Omega}$ of fields satisfying $\dirac F= ikF$ in
$\Omega^\pm$. For the exterior domain $\Omega^-$ these fields also
satisfy a Dirac radiation condition; See \eqref{eq:diractranspr} below
for its formulation.

For computations, we express the Cauchy singular operator $E_k$ as a
matrix with entries being double and single layer potential-type
operators, using the notation
\begin{align}
  K^v_k h(x) &= \pv\int_{\bdy\Omega} \scl{v(x,y)}{(\nabla \Phi_k)(y-x)} 
                h(y) d\sigma(y), \qquad x\in\bdy\Omega,\\
  S^a_k h(x) &= ik\int_{\bdy\Omega} a(x,y) \Phi_k(y-x) h(y) d\sigma(y),
  \qquad x\in\bdy\Omega,
\end{align}
respectively. Here $v(x,y)$ is a vector field and $a(x,y)$ is a scalar
function, depending on $x,y\in \bdy\Omega$. The single layer $S^a_k$
is a weakly singular integral operator, and so is also $K^v_k$ if
$\bdy\Omega$ is smooth and $v=\nu$ is the normal direction at $x$ or
$y$. Otherwise $K^v_k$ is a principal value singular integral, but is
bounded on many natural function spaces $\mH$, also for general
Lipschitz interfaces $\bdy\Omega$.
 
Consider first dimension $n=2$, with a curve $\bdy\Omega$ and $\Phi_k$
given by \eqref{eq:phik2d}. Here we use a positively oriented frame
$\{\nu,\tau\}$ at $x\in\bdy \Omega$, with $\nu=\nu(x)$ being the
normal vector into $\Omega^-$ and $\tau=\tau(x)$ the tangential vector
counter clock-wise from $\nu$. The corresponding frame at $y\in
\bdy\Omega$ we write as $\nu'=\nu(y)$ and $\tau'=\tau(y)$. In the
plane, the Clifford algebra $\wedge\R^2$ is four dimensional and
spanned by $\{1, e_1, e_2, e_{12}\}$. We prefer the ordering $\{1,
e_{12}, e_1, e_2\}$, since Clifford multiplication by vectors then
will be represented by block off-diagonal matrices. At $x\in\bdy
\Omega$, we write 
\begin{equation}   \label{eq:2dframe}
  h= h_1+h_2 \nu\tau+ h_3\nu+ h_4\tau\approx 
  \begin{bmatrix} h_1 & h_2 & h_3 & h_4 \end{bmatrix}^{\rm T},
\end{equation} 
using instead the vector frame $\{\nu,\tau\}$. Here $\nu\tau=e_{12}\in\wedge^2\R^2$
does not depend on $x$, even though $\nu$ and $\tau$ do so.
By writing out $\nabla\Phi_k$ and the Clifford products in \eqref{eq:cliffcauchy},
we obtain
\begin{equation}  \label{eq:Ek2D}
  E_k=
  \begin{bmatrix}
    -K^{\nu'}_k & -K^{\tau'}_k & S^1_k & 0 \\
    K^{\tau'}_k & -K^{\nu'}_k & 0 & S^1_k \\
    S^{\nu\cdot\nu'}_k & S^{\nu\cdot\tau'}_k & -K^\nu_k & K^\tau_k \\
     S^{\tau\cdot\nu'}_k &  S^{\tau\cdot\tau'}_k & - K^\tau_k & -K^\nu_k
  \end{bmatrix},
\end{equation}
in the multivector frame $\{1,\nu\tau, \nu,\tau\}$.

Next consider dimension $n=3$, with a surface $\bdy\Omega$ and
$\Phi_k$ given by \eqref{eq:phik3d}. Here we use a positively oriented
ON-frame $\{\nu,\tau,\theta\}$ at $x\in\bdy \Omega$, with $\nu=\nu(x)$
being the normal vector into $\Omega^-$ and $\tau=\tau(x)$ and
$\theta=\theta(x)$ being tangential vector fields. The frame vectors
at $y\in \bdy\Omega$, we write as $\nu'=\nu(y)$, $\tau'=\tau(y)$ and
$\theta'=\theta(y)$. A multivector field at $x\in\bdy \Omega$ we write
as
\begin{equation}   \label{eq:3dframe}
  h= h_1+h_2\tau\theta + h_3\theta\nu+ h_4\nu\tau+h_5\nu\tau\theta+h_6\nu+h_7\tau+h_8\theta.
\end{equation}
Here $\nu\tau\theta=e_{123}\in\wedge^3\R^3$ does not depend on $x$, although
in general each of the three vectors do so.
Again, we prefer this ordering of the frame multivectors since Clifford multiplication
by vectors
then is block off-diagonal.
In the multivector frame $\{1,\tau\theta,\theta\nu,\nu\tau,\nu\tau\theta,\nu,\tau,\theta\}$, 
we have 
\begin{equation} \label{eq:Ek3D}
E_k=
\begin{bmatrix}
  -K^{\nu'}_k  & 0 & K^{\theta'}_k &  -K^{\tau'}_k & 0 & S^1_k & 0 & 0  \\
  K^{\nu\times \nu'}_k & -K^\nu_k & -K^{\nu\times \theta'}_k & K^{\nu\times \tau'}_k & S^{\nu\cdot\nu'}_k &
  0 & S^{\nu\cdot \theta'}_k & -S^{\nu\cdot \tau'}_k  \\
  K^{\tau\times \nu'}_k & -K^\tau_k & -K^{\tau\times \theta'}_k & K^{\tau\times \tau'}_k & S^{\tau\cdot \nu'}_k &
  0 & S^{\tau\cdot\theta'}_k & -S^{\tau\cdot\tau'}_k  \\
  K^{\theta\times \nu'}_k & -K^\theta_k  & -K^{\theta\times \theta'}_k &  K^{\theta\times \tau'}_k & S^{\theta\cdot \nu'}_k & 0 & S^{\theta\cdot\theta'}_k & -S^{\theta\cdot \tau'}_k  \\
   0 & S^1_k  & 0 & 0 & -K^{\nu'}_k & 0 & -K^{\theta'}_k & K^{\tau'}_k \\
  S^{\nu\cdot\nu'}_k & 0 & -S^{\nu\cdot\theta'}_k & S^{\nu\cdot \tau'}_k & -K^{\nu\times\nu'}_k & 
  -K^\nu_k & -K^{\nu\times \theta'}_k & K^{\nu\times \tau'}_k  \\
  S^{\tau\cdot\nu'}_k & 0  & -S^{\tau\cdot\theta'}_k & S^{\tau\cdot \tau'}_k & -K^{\tau\times\nu'}_k & 
  -K^\tau_k & -K^{\tau\times \theta'}_k & K^{\tau\times \tau'}_k  \\
  S^{\theta\cdot\nu'}_k & 0  & -S^{\theta\cdot\theta'}_k & S^{\theta\cdot \tau'}_k & -K^{\theta\times\nu'}_k & 
  -K^\theta_k & -K^{\theta\times \theta'}_k & K^{\theta\times \tau'}_k  
\end{bmatrix}.
\end{equation}

Finally we note that we similarly can write the Cauchy integral
\eqref{eq:fieldCauchy} for the fields in $\Omega^\pm$ , in matrix
form. The only difference is the normalization factor $2$ and the fact
that we choose the frame $\{e_1,\ldots, e_n\}$ at the field point
$x\in \Omega^\pm$. We also allow for a general function $h:\bdy
\Omega\to \wedge\R^n$ and not only a trace $f$ of a solution to the
Dirac equation in \eqref{eq:fieldCauchy}. By the associativity of the
Clifford product, this still yields a field $F$ solving the Dirac
equation, but $h\ne F|_{\bdy \Omega}$ in general. In this case, when
$x\notin\bdy\Omega$, we denote the layer potentials by $\tilde K^v_k$
and $\tilde S^a_k$, with $v$ and $a$ now only depending on $y\in\bdy
\Omega$.  The field evaluation formulas 
\eqref{eq:ugradfield2d}, \eqref{eq:Bfield3d} and \eqref{eq:Efield3d} 
also use vector versions 
\begin{align} 
  \tilde K^A_k h(x) &= \int_{\bdy \Omega} A(y)(\nabla \Phi_k)(y-x) h(y) d\sigma(y), 
                \qquad x\notin\bdy\Omega,  
   \label{eq:fieldlayers1}\\
  \tilde S^v_k h(x) &= ik\int_{\bdy \Omega} v(y) \Phi_k(y-x) h(y) d\sigma(y),
                \qquad x\notin\bdy\Omega,  
\label{eq:fieldlayers2}
\end{align} 
of $\tilde K^v_k$ and $\tilde S^a_k$, where now 
$A(y):\R^n\to\R^n$ is a matrix function and $v(y)$ is a vector
field.

\section{The energy trace space}   \label{sec:tracespace}

We define in this section the appropriate norms and function spaces 
for the Dirac equation \eqref{eq:dirac}.
Let $\Omega^+\subset \R^n$ be a bounded Lipschitz domain.
The general Dirac transmission problem that we consider reads
\begin{equation}  \label{eq:diractranspr}
\begin{cases}
   f^+= M(f^-+f^0), & x\in\bdy \Omega,\\
   \dirac F^+= ik_+ F^+, &x\in\Omega^+,\\
   \dirac F^-= ik_- F^-, &x\in\Omega^-,\\
   (x/|x|-1)F^-= o(|x|^{-(n-1)/2}e^{\im k_- |x|}), & x\to\infty.
   \end{cases}
\end{equation}
This is our master transmission problem, into which we embed the
Helmholtz and Maxwell transmission problems in Sections~\ref{sec:diracint2d}
and \ref{sec:3DDirac}, where the multiplier $M$ will be specified.
The radiation condition stated here for the Dirac equation reduces to the 
classical Sommerfeld and Silver--M\"uller conditions 
for Helmholtz and Maxwell solutions $F_{\text{helm}}$ and $F_\emm$ respectively. 
See \cite[Sec. 9.3]{RosenGMA:19}.

For $F=F^+$ in $\Omega^+$, we use the norm
\begin{equation}
  \left( \int_{\Omega^+} (|F(x)|^2+ |\nabla\wedg F(x)|^2) dx \right)^{1/2}.
\end{equation}
It is important to note that for both Helmholtz and Maxwell fields 
$F_{\text{helm}}$ and $F_\emm$,
the second term $\nabla\wedg F$ is bounded by the first term $F$, and the norm reduces to the
$L_2$ norm of $F$.
In $\Omega^-$ we use the corresponding norm, but integrate only
over the bounded set $\Omega^-_R$.
Different choices of $R$ yield equivalent norms for solutions to the 
Dirac equation which satisfy the radiation condition.
See \cite[Sec. 9.3]{RosenGMA:19}.

It was shown in \cite{AxThesisPub4:06} 
that there exists a unique Hilbert space $\mH$ 
(although it does not come with a canonical norm) of 
multivector fields on the Lipschitz interface $\partial\Omega$, which is the 
trace space corresponding to the above norms on $F^\pm$ in $\Omega^\pm$,
with the following properties.
It splits into closed subspaces in two ways as 
\begin{equation}
  \mH= E_k^+\mH\oplus E_k^-\mH
  \quad\text{and}\quad\mH = \mH_\ta\oplus \mH_\no,
\end{equation}  
where $\ta$ means tangential and $\no$ means normal. 
There is a one-to-one correspondence $F^+\leftrightarrow f^+$ between
Dirac solutions $F^+$ in $\Omega^+$ and their traces $f^+\in E_k^+\mH$,
with inverse given by the Cauchy integral \eqref{eq:fieldCauchy}.
Similarly, there is a one-to-one correspondence $F^-\leftrightarrow f^-$ between
Dirac solutions $F^-$ in $\Omega^-$ satisfying the radiation condition,
and their traces $f^-\in E_k^-\mH$,
with inverse given by the Cauchy integral.

The subspace $\mH_\ta$ denotes the subspace of tangential multivector fields,
and there is a bounded and surjective tangential trace map
\begin{equation}
F\mapsto \nu\lctr(\nu\wedg f)\in \mH_\ta,
\end{equation}
of multivector fields $F$ in a neighbourhood $U$ of $\partial\Omega$ 
with norm $(\int_U (|F(x)|^2+ |\nabla\wedg F(x)|^2) dx )^{1/2}<\infty$.
The subspace $\mH_\no$ denotes the subspace of normal multivector fields,
and there is a bounded and surjective normal trace map
\begin{equation}
F\mapsto \nu\wedg(\nu\lctr f)\in \mH_\no,
\end{equation}
of multivector fields $F$ in a neighbourhood $U$ of $\partial\Omega$ 
with norm $(\int_U (|F(x)|^2+ |\nabla\lctr F(x)|^2) dx )^{1/2}<\infty$.

For smooth interfaces $\partial\Omega$, the trace space $\mH$ consists
of all $f\in H^{-1/2}(\partial\Omega;\wedge\R^n)$ such that 
$d_\ta f_\ta\in H^{-1/2}(\partial\Omega;\wedge\R^n)$
and $\del_\ta (\nu\lctr f)\in H^{-1/2}(\partial\Omega;\wedge\R^n)$,
with $d_\ta$ and $\del_\ta$ denoting the tangential exterior and interior 
derivatives along $\partial\Omega$.
Here $f_\ta$ denotes the tangential part of $f$ and $\nu\lctr f$ is the normal
part of $f$ with the factor $\nu$ removed.

To characterize $\mH$ in terms of fractional Sobolev-type spaces on 
non-smooth Lipschitz interfaces $\partial\Omega$ is a non-trivial problem.
Such a characterization was achieved for polyhedra in \cite{BuffaCiarlet1:01, BuffaCiarlet2:01}, for general Lipschitz interfaces in \cite{BuffaCostabelSheen:02} in $\R^3$, and extended to general 
multivector traces in \cite{Weck:04}.
For the energy trace space space $\mH$ on Lipschitz interfaces, this
shows the following.
For 2D domains, using the frame \eqref{eq:2dframe}, our
boundary function space is
\begin{equation}  \label{eq:H2space}
  \mH_2=\mH= H^{1/2}(\bdy\Omega)\oplus H^{1/2}(\bdy\Omega)\oplus H^{-1/2}(\bdy\Omega)\oplus H^{-1/2}(\bdy\Omega).
\end{equation}
It is for 3D domains that the non-trivial result from \cite[Thm. 4.1]{BuffaCostabelSheen:02}
is needed, which shows that in the frame \eqref{eq:3dframe}, we have
\begin{multline}   \label{eq:H3space}
  \mH_3 =\mH = 
  H^{1/2}(\bdy\Omega)\oplus
  H^{-1/2}(\bdy\Omega)\oplus
  *H^{-1/2}(\curl,\partial\Omega) \\
  \qquad\oplus 
   H^{1/2}(\bdy\Omega)\oplus
  H^{-1/2}(\bdy\Omega)\oplus
 H^{-1/2}(\curl,\bdy\Omega).
\end{multline}
Here $*$ is the 3D Hodge star and the space of tangential vector fields
is defined as 
\begin{equation}
H^{-1/2}(\curl,\bdy\Omega) =\sett{f\in (\nu\times H^{1/2}(\bdy\Omega)^3)^*}
{\curl_\ta f\in H^{-1/2}(\bdy\Omega)},
\end{equation}
where $\curl_\ta$ denotes tangential surface 
curl on $\bdy\Omega$ and  $(\nu\times H^{1/2}(\bdy\Omega)^3)^*$
denotes the dual space of $\nu\times H^{1/2}(\bdy\Omega)^3$.  
One should note that the test functions $\nu\times H^{1/2}(\bdy\Omega)^3$ will not be $H^{1/2}$
smooth on general Lipschitz regular $\bdy\Omega$, since $\nu$ is in general only
measurable.
To summarize, the conditions on $h_3,h_4,h_7,h_8$ are that 
$h_3\tau+h_4\theta\in H^{-1/2}(\curl,\bdy\Omega)$
and $h_7\tau+h_8\theta\in H^{-1/2}(\curl,\bdy\Omega)$.

 We remark that there is no canonical norm on $\mH_2$ or
$\mH_3$, but this does not present a problem and it should be clear from the
context in the
estimates to come, which choice among equivalent norms is used when 
such needs to be fixed.

 Throughout this paper $X\lesssim Y$ means that there exists $C<\infty$
independent of relevant variables so that $X\le CY$, and $X\approx Y$
means that $X\lesssim Y$ and $Y\lesssim X$.  

\section{Helmholtz existence and uniqueness}  
\label{sec:helmwp}

This section surveys basic solvability results for the Helmholtz
transmission problem \eqref{eq:Helmtransmprobl}, which are valid in
any dimension $n\ge 2$. The proofs follow \cite{AxThesisPub4:06}, with
a translation from the Dirac to the Helmholtz framework. Consider
first uniqueness of solutions $U^\pm$.

\begin{prop}   
\label{prop:helminj}
    Let $\Omega^+\subset \R^n$ be a bounded Lipschitz domain
    with connected exterior $\Omega^-$, and consider a solution $U^\pm$ to \eqref{eq:Helmtransmprobl}. 
Assume that the incoming wave
vanishes so that $u_0=0$. If  $\hat\epsilon$ from  \eqref{eq:Helmtransmprobl}
satisfies 
\begin{equation}   \label{eq:injcondforeps}
{\hat\epsilon} /\hat k\in \swp(k_-,k_+),
\end{equation} 
recalling that $\hat k= k_+/k_-$ and Definition~\ref{defn:hexagon}, 
then $U^+=U^-=0$ identically.
\end{prop}

\begin{proof}
From the jump relations we have
\begin{equation}
  \int_{\bdy \Omega} u^+ \rev{\pd_\nu u^+} d\sigma= \rev {\hat\epsilon} \int_{\bdy \Omega} u^- \rev{\pd_\nu u^-} d\sigma.
\end{equation}
Apply Green's first identity for $\Omega^+$ and $\Omega^-_R$,
use the Helmholtz equation for $U^\pm$ and the radiation condition for $U^-$, and multiply the equation by ${\hat\epsilon}k_-/i$ to obtain 
\begin{multline}   \label{eq:greeninjhelm}
\frac{{\hat\epsilon} k_-}{k_+}\left(\frac 1i\int_{\Omega^+}(k_+|\nabla U^+|^2-\rev k_+|k_+U^+|^2) dx\right) 
\\+ |{\hat\epsilon}|^2\left( \frac 1i\int_{\Omega^-_R} (k_-|\nabla U^-|^2-\rev k_-|k_-U^-|^2) dx
+\int_{|x|=R} |k_-U^-|^2 d\sigma\right)\to 0,
\end{multline}
as $R\to\infty$.
Denote the three integrals appearing in \eqref{eq:greeninjhelm} by $I^+$, $I^-_R$ and $I_R$ respectively, including $i^{-1}$ in the first two,
and set $\phi_\pm:=|\arg(k_\pm/i)|$ as in Definition~\ref{defn:hexagon},
let $z:= {\hat\epsilon} k_-/k_+$,
and define the sector
\begin{equation}
   S_\phi:= \sett{z\in\C\setminus\{0\}}{|\arg(z)|\le \phi} \cup \{0\}. 
\end{equation}
We note that 
\begin{equation}   \label{eq:Iplusexpr}
  I^+= \im k_+\int_{\Omega^+} (|\nabla U^+|^2+|k_+U^+|^2) dx
  -i\re k_+\int_{\Omega^+} (|\nabla U^+|^2-|k_+U^+|^2) dx,
\end{equation}
and similarly for $I^-_R$.

\begin{figure}[t]
  \centering
  \includegraphics[height=50mm]{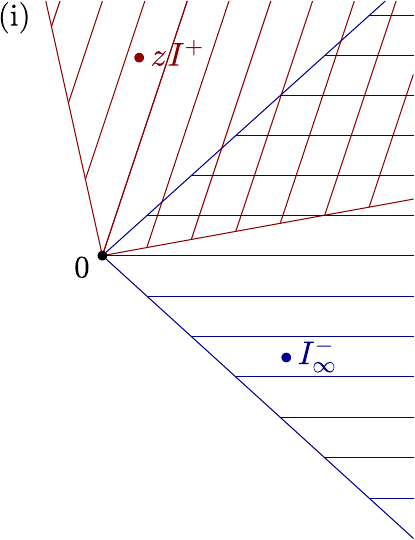}
  \includegraphics[height=50mm]{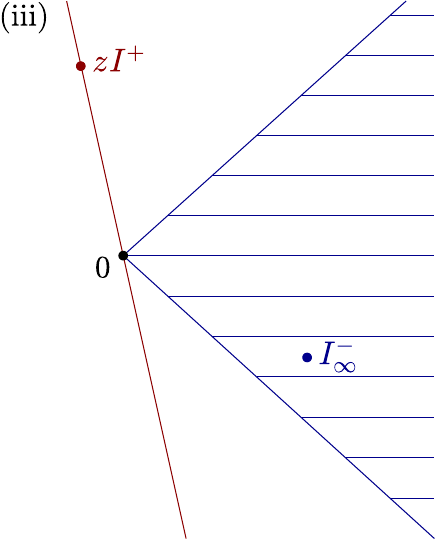}
  \includegraphics[height=50mm]{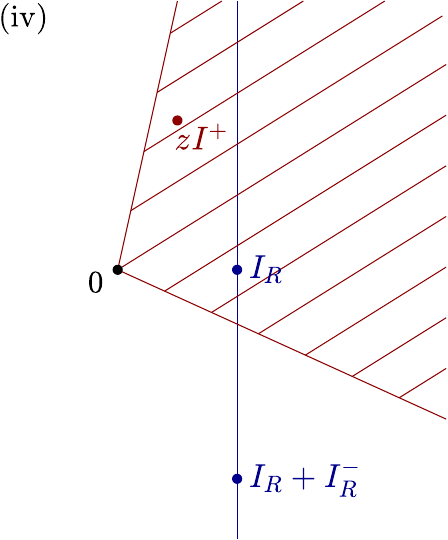}
  \caption{\sf Sectors and lines appearing, depending on $\phi_\pm$.}
\label{fig:sectors}
\end{figure}

We verify that the condition \eqref{eq:injcondforeps} implies $U^\pm=0$, by examining \eqref{eq:greeninjhelm} in the nine
cases $\phi_\pm=0$, $0<\phi_\pm<\pi/2$ and $\phi_\pm=\pi/2$ as follows.

(i) Assume $0<\phi_-<\pi/2$ and $0<\phi_+<\pi/2$. Then $U^-$ decays
exponentially as $R\to\infty$. Setting $R=\infty$ in
\eqref{eq:greeninjhelm}, we have the equation $z
I^++|{\hat\epsilon}|^2I^-_\infty=0$, where $I^+\in S_{\phi_+}$ and
$I^-_\infty\in S_{\phi_-}$. If $|\arg(z)|+\phi_-+\phi_+<\pi$, then this
is possible only if $I^+=I^-_\infty=0$. See Figure~\ref{fig:sectors}(i).
Indeed, the sectors
$zS_{\phi_+}$ and $-S_{\phi_-}$ intersect only at $0$. We conclude in
particular that $\re I^+=0=\re I^-_\infty$, and therefore $U^+=U^-=0$
according to \eqref{eq:Iplusexpr}.

If $|\arg(z)|+\phi_-+\phi_+=\pi$, then the rotated sectors
$zS_{\phi_+}$ and $-S_{\phi_-}$ touch and we observe that $I^+$ and
$I^-_\infty$ lie on the boundary of respective sector $S_{\phi _\pm}$.
From \eqref{eq:Iplusexpr}, we conclude that  $\int_{\Omega^+}(|\nabla
U^+|^2-|k_+U^+|^2)dx=\pm \int_{\Omega^+}(|\nabla U^+|^2+|k_+U^+|^2)dx$.  This forces either
$U^+=0$ or $\nabla U^+=0$. We conclude from the Helmholtz equation
that $U^+=0$. A similar argument shows that $U^-=0$.

(ii) Assume $\phi_\pm<\pi/2$ and either $\phi_+=0$ or $\phi_-=0$.
When $|\arg(z)|+\phi_-+\phi_+<\pi$, the argument in (i) applies. When
$|\arg(z)|+\phi_-+\phi_+=\pi$ and $\phi_+\ne 0$, the argument in (i)
shows that $U^+=0$. The jump condition then shows that $U^-=0$. If
instead $|\arg(z)|+\phi_-+\phi_+=\pi$ and $\phi_-\ne 0$, a similar
argument shows that $U^-=0$, from which $U^+=0$ follows from the jump
conditions.

(iii) Assume $\phi_-<\pi/2$ and $\phi_+=\pi/2$. We now have the
equation $z I^++|{\hat\epsilon}|^2I^-_\infty=0$, with $\re I^+=0$ and
$I^-_\infty\in S_{\phi_-}$. If $\min(|\arg(z)|,|\arg(-z)|)<
\pi/2-\phi_-$, then the line $zS_{\phi_+}$ intersects the sector
$-S_{\phi_-}$ only at the origin, which shows that $I^+=I^-_\infty=0$.
See Figure~\ref{fig:sectors}(iii).
From the exterior ($\Omega^-$) analogue of \eqref{eq:Iplusexpr}, we
conclude $U^-=0$, and jump conditions imply $U^+=0$.

If $\min(|\arg(z)|,|\arg(-z)|)= \pi/2-\phi_-$ and if $\phi_->0$, then
it follows that $I^-_\infty$ lies on the boundary of $S_{\phi_-}$. As
in (i), this shows that $U^-=0$, and by jump conditions that $U^+=0$.

(iv) Assume $\phi_-=\pi/2$. Then $\re I^-_R=0$, and
\eqref{eq:greeninjhelm} reduces to $\re(z I^+)+ \lim_{R\to
  \infty}I_R=0$. If $|\arg(z)|+\phi_+\le \pi/2$, then $\re(z I^+)\ge
0$ and $\lim_{R\to \infty} I_R=0$ follows. 
See Figure~\ref{fig:sectors}(iv).
By Rellich's lemma this
implies that $U^-=0$. The jump relations then shows that $U^+=0$.

If also $\phi_+=\pi/2$ then $\re I^+=0$, and we conclude that
$\lim_{R\to \infty} I_R=0$ also when $z<0$, and can in the same way
conclude that $U^-=0=U^+$.
\end{proof}

Next consider the existence of solutions $U^\pm$. 
The following result is essentially from \cite{AxThesisPub4:06}, where
more details and background can be found. 
 For a short survey of the Fredholm theory that we apply, 
we refer to \cite[Sec. 6.4]{RosenGMA:19}. 

\begin{prop}   \label{prop:helmexist}
    Let $\Omega^+\subset \R^n$ be a bounded Lipschitz domain
    with connected exterior $\Omega^-$. 
  Then there exists $1\le C(\bdy \Omega)<\infty$ such that
   if
\begin{equation}  \label{eq:helmexistcond}
  {\hat\epsilon}\in \C\setminus [-C(\bdy \Omega), -1/C(\bdy \Omega)] \quad\text{and}\quad {\hat\epsilon} /\hat k\in \swp(k_-,k_+),
\end{equation} 
recalling that $\hat k= k_+/k_-$ and Definition~\ref{defn:hexagon}, 
then there exists a unique solution $U^+\in H^1(\Omega^+)$, $U^-\in H^1(\Omega^-_R)$ 
to the Helmholtz transmission problem
   \eqref{eq:Helmtransmprobl} in $\R^n$, $n\ge 2$,  depending continuously
 on the  datum $u^0\in H^{1/2}(\bdy \Omega)$.
\end{prop}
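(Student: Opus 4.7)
The plan is to reduce the transmission problem to a boundary integral equation whose Fredholmness is controlled by the essential spectrum of the double layer operator $K_{\rm d}$ in the energy norm, and then invoke the uniqueness statement of Proposition~\ref{prop:helminj} together with the Fredholm alternative.

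First I would choose an ansatz for $U^\pm$ in the style of a Hodge/single-and-double layer representation, writing
\begin{equation}
  U^+ = \mathcal{D}_{k_+} f - \mathcal{S}_{k_+} g \quad\text{in } \Omega^+,\qquad
  U^- = \mathcal{D}_{k_-} f - (1/\hat\epsilon)\mathcal{S}_{k_-} g \quad\text{in } \Omega^-,
\end{equation}
with $f\in H^{1/2}(\bdy\Omega)$, $g\in H^{-1/2}(\bdy\Omega)$, and $\mathcal{D}_{k},\mathcal{S}_k$ the usual Helmholtz double and single layer potentials. Each representation automatically solves the Helmholtz equation in its domain and, because we take the exterior representation with $k_-$, the Sommerfeld radiation condition is built in. Imposing the two jump conditions of \eqref{eq:Helmtransmprobl} and using the standard jump formulas yields a $2\times 2$ system on $\bdy\Omega$ of the form
\begin{equation}
  \begin{bmatrix} I + (K_{\rm d}^{k_+} - K_{\rm d}^{k_-}) \;+\; \tfrac{\hat\epsilon-1}{\hat\epsilon+1}\text{(comp.)} & \text{s.i.o.} \\ \text{s.i.o.} & I + \tfrac{\hat\epsilon-1}{\hat\epsilon+1} K_{\rm d}^* \;+\; (\text{compact}) \end{bmatrix}\begin{bmatrix} f \\ g \end{bmatrix} = \begin{bmatrix} u^0 \\ \pd_\nu u^0 \end{bmatrix},
\end{equation}
where the off-diagonal and $k$-dependent pieces differ from the $k=0$ kernels by weakly singular (hence compact on the energy trace space) terms. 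The key structural point is that, after rescaling, the principal part of the system is a M\"obius image of $K_{\rm d}$ and $K_{\rm d}^*$ acting on the energy spaces $H^{1/2}$ and $H^{-1/2}$ respectively.

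Next I would establish Fredholmness. In the energy norm, the essential spectrum of $K_{\rm d}$ on $L_2(\bdy\Omega)$ (more precisely on the appropriate $H^{\pm 1/2}$ scale) is a compact subset of $(-1,1)$ for any bounded Lipschitz $\Omega^+$; this is the classical result whose quantitative form defines the constant $C(\bdy\Omega)$ via
\begin{equation}
  \Big\{\tfrac{\hat\epsilon+1}{\hat\epsilon-1} : \hat\epsilon\in[-C(\bdy\Omega),-1/C(\bdy\Omega)]\Big\} = [-\rho,\rho],
\end{equation}
with $\rho<1$ equal to the spectral radius bound of $K_{\rm d}$ in the energy norm. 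Thus the hypothesis $\hat\epsilon\in\C\setminus[-C(\bdy\Omega),-1/C(\bdy\Omega)]$ is exactly what is needed so that $\tfrac{\hat\epsilon-1}{\hat\epsilon+1}I \pm K_{\rm d}$ (and the adjoint version) is invertible modulo compacts, yielding that the boundary operator above is Fredholm of index $0$ on the energy trace space.

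Finally I would close the argument by the Fredholm alternative. Injectivity of the boundary operator follows from the uniqueness part, Proposition~\ref{prop:helminj}, together with the invertibility of our representation map $(f,g)\mapsto (U^+,U^-)$: any density in the kernel produces $U^\pm=0$ by uniqueness, and then the jumps of $\mathcal{D}_{k_\pm}f-\mathcal{S}_{k_\pm}g$ force $f=g=0$. Hence the operator is bijective on $H^{1/2}\oplus H^{-1/2}$, giving existence of a continuous solution operator for the datum $u^0\in H^{1/2}(\bdy\Omega)$. The continuous dependence on the data is then immediate from the bounded inverse. The main obstacle I would expect is the precise verification that the essential spectrum bound $\rho<1$ of $K_{\rm d}$ on the energy space indeed translates into the stated interval $[-C(\bdy\Omega),-1/C(\bdy\Omega)]$ via the M\"obius transform, and controlling the $k$-dependent perturbations uniformly so that they remain compact relative to the principal part; this is the technical heart borrowed from \cite{AxThesisPub4:06}.
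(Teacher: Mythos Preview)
Your route via a layer-potential ansatz and the essential spectrum of $K_{\rm d}$ is genuinely different from the paper's proof. The paper does \emph{not} reduce the problem to a boundary integral equation; instead it works directly with the transmission map
\[
   T_{\hat\epsilon, k_+, k_-}:\ (F^+,F^-)\longmapsto f^+-Mf^-,
\]
views it as a family of operators depending continuously on $(\hat\epsilon,k_+,k_-)$, and proves index zero by deforming to the trivial case $\hat\epsilon=1$, $k_+=k_-$, where the Cauchy integral gives an explicit inverse. The semi-Fredholm estimate along the deformation is obtained by constructing auxiliary Hodge-type potentials $V^\pm$ (a scalar potential in $\Omega^-$, a bivector/conjugate potential in $\Omega^+$), pairing the two jump relations with these potentials, and applying Stokes' theorem. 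Cross terms cancel, and one is left with $\int_{\Omega^+}|\nabla U^+|^2+\hat\epsilon^{-1}\int_{\Omega^-_R}|\nabla U^-|^2$ up to compact errors; the sign combination fails to dominate only when $\hat\epsilon$ is negative real, which is where the interval $[-C,-1/C]$ comes from. This potential-theoretic bilinear-form argument is the technical core, and it bypasses any representation map altogether.

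Your proposal is coherent in outline, but there is a real gap at the step ``the jumps of $\mathcal D_{k_\pm}f-\mathcal S_{k_\pm}g$ force $f=g=0$''. With the simple ansatz you wrote, vanishing of $U^+$ in $\Omega^+$ only controls the \emph{exterior} continuation of the $k_+$-potentials, and vanishing of $U^-$ in $\Omega^-$ only controls the \emph{interior} continuation of the $k_-$-potentials; since the two representations use different wavenumbers, no single jump identity recovers $(f,g)$. This is precisely the mechanism by which false eigenwavenumbers enter classical transmission BIEs, and eliminating it requires either a carefully engineered combined-field or M\"uller-type ansatz, or a separate uniqueness argument for the auxiliary exterior/interior problems at wavenumbers $k_+,k_-$. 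You flagged the essential spectrum translation as the main obstacle, but in fact the ansatz injectivity is the more serious one; the paper's direct Hodge-potential route sidesteps it entirely, at the cost of a somewhat less elementary coercivity calculation. Your approach would ultimately also need an index-zero argument, since ``Fredholm'' alone (from $\lambda\notin\sigma_{\rm ess}(K_{\rm d})$) does not give index zero without a homotopy to an invertible point.
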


\begin{proof}
   (i) We first use Fredholm theory to reduce the problem to an estimate of
 \begin{equation}  \label{eq:helmneededest}
  \int_{\Omega^+}|\nabla U^+|^2 dx+\int_{\Omega^-_R}|\nabla U^-|^2  dx
\end{equation}
by the $\mH$ norm of $u^0$ and compact terms.
To this end, it is convenient to consider the multivector fields
  $F^\pm=\nabla U^\pm+ik_\pm U^\pm$ and $F^0= \nabla U^0+ik_-U^0$
  as in \eqref{eq:helmgrad}.
  For fixed $k_\pm$, we define function spaces $\widetilde \mH_{k_\pm}^{1,\pm}$
  of such $F^\pm$  in $\Omega^\pm$, with potentials $U^\pm$ solving $\Delta U^\pm+ k_\pm^2 U^\pm=0$, and $U^-$ satisfying the radiation condition in $\Omega^-$.
  The norm of $F^+\in \widetilde \mH^{1,+}_{k_+}$ is $\|F^+\|_{L_2(\Omega^+)}$
  and the norm of $F^-\in \widetilde \mH^{1,-}_{k_-}$ is $\|F^-\|_{L_2(\Omega^-_R)}$,
  with a fixed large $R$. 
 For the data/right-hand sides, we define the subspace 
$\mH^1_{k_-}\subset \mH$ consisting of $f= f_0+f_{1\ta}+ f_{1\no}$,
with $f_0$ scalar and  $f_{1\ta}, f_{1\no}$ tangential and normal vector fields 
respectively, satisfying the constraint $\nabla_\ta f_0= ik_-f_{1\ta}$.
The traces of incoming waves $F^0$ all belong to $\mH^1_{k_-}$. 

The transmission problem \eqref{eq:Helmtransmprobl}  amounts 
to inverting  a bounded 
linear map
\begin{equation}
   T_{\hat\epsilon, k_+, k_-}: \widetilde \mH_{k_+}^{1,+}
   \oplus \widetilde \mH_{k_-}^{1,-}\to \mH^1_{k_-}.
\end{equation}
Assume first that $k_+=k_-$ and that $\hat\epsilon=1$.
In this case the jump condition in \eqref{eq:Helmtransmprobl} reduces to
$f^+-f^-=f^0$, and it follows that $T_{\hat\epsilon, k_+, k_-}$ is invertible
since the Cauchy integrals \eqref{eq:fieldCauchy}, with $x\in\Omega^\pm$, provide 
an explicit inverse.

Next consider general parameters $\hat\epsilon, k_+, k_-$ satisfying
\eqref{eq:helmexistcond}.
It suffices to show that $T_{\hat\epsilon, k_+, k_-}$
is a Fredholm operator with index zero. 
Indeed, Proposition~\ref{prop:helminj} shows injectivity, from which surjectivity then follows.
To prove index zero, we note that 
$\C\setminus [-C(\bdy \Omega), -1/C(\bdy \Omega)]$
is an open connected set, and aim to apply Fredholm perturbation theory,
perturbing $\hat\epsilon$ to $1$ and $k_+$ to $k_-$.
We prove in (ii) below that the operator $T_{\hat\epsilon, k_+, k_-}$
is semi-Fredholm whenever $\hat\epsilon\notin [-C(\bdy \Omega), -1/C(\bdy \Omega)]$, so it remains to verify that the operator and function spaces depend
continuously on the parameters. 

From Hodge decompositions of the space $\mH$, see 
\cite{AxThesisPub4:06}, it follows that for
$k_-\ne 0$ there are projections $\mH\to \mH^1_{k_-}$ onto these subspaces,
depending continuously on $k_-$.
This uses the Hodge projections not for the exterior and 
interior derivatives, but for zero order perturbations of these defined by the
wave number $k_-$ (c.f. \eqref{eq:hodgekpotential} below).
The crucial observation is that for all $k_-\ne 0$, the cohomology for
this perturbed Hodge decomposition vanishes. This implies that the
perturbed Hodge projections depend continuously on $k_-$.

For the domain spaces $\widetilde \mH_{k_\pm}^{1,\pm}$, we note that 
the trace map and the Cauchy integral give an isomorphism between
$F^\pm\in\widetilde \mH_{k_\pm}^{1,\pm}$ and 
$f^\pm\in\mH_{k_\pm}^{1,\pm}\subset E_k^\pm\mH\subset \mH$.
Like for $\mH^1_{k_-}$, it follows from Hodge decompositions of $\mH$
that for
$k_\pm\ne 0$ there are projections $\mH\to \mH_{k_\pm}^{1,\pm}$ onto these 
trace spaces for Helmholtz fields,
depending continuously on $k_\pm$.
Since $T_{\hat\epsilon, k_+, k_-}$ clearly depend continuously on $\hat\epsilon$,
perturbation theory applies to show index zero, provided we show the
estimate \eqref{eq:helmneededest}.

(ii) 
To establish the estimate  of  \eqref{eq:helmneededest}, we construct certain auxiliary potentials
$V^\pm$
to the gradient vector fields $\nabla U^\pm$.
In $\Omega^-_R$, we simply use the given scalar potential $V^-=U^-$.
In $\Omega^+$, we find a bivector field $V^+:\Omega^+\to\wedge^2\R^n$ and a vector field $\tilde V^+:\Omega^+\to \wedge^1\R^n$
such that 
\begin{equation}  \label{eq:hodgekpotential}
\begin{cases}
  \nabla U^+= \nabla\lctr V^++ ik_+ \tilde V^+,\\
  ik_+ U^+= \nabla\lctr \tilde V^+.
\end{cases}
\end{equation}
(Modulo the term $\tilde V^+$, this means that $V^+$ is a conjugate function
when $n=2$ and a vector potential when $n=3$.)
The existence and compactness of the 
map $H^1(\Omega^+)\to L_2(\Omega^+)^2: U^+\mapsto (V^+,\tilde V^+)$
follows from Hodge decompositions as in \cite{AxMcIntosh:04}, 
after translation
from the spacetime framework using \cite[Sec. 9.1]{RosenGMA:19}.
To complete the construction of $V^\pm$, we extend the potentials
$V^\pm$
to compactly supported functions on $\R^n$, with $\nabla V^-$ and 
$\nabla\lctr V^+$ belonging to $L_2(\R^n)$.

Pairing the jump relations with $v^\pm$, we have
\begin{equation}  \label{eq:helmjumpints}
\begin{cases}
\int_{\bdy \Omega}\scl{\nu\wedg\nabla u^+}{v^+} d\sigma=
\int_{\bdy \Omega}(\scl{\nu\wedg\nabla u^-}{v^+}+\scl{\nu\wedg\nabla u^0}{v^+}) d\sigma,\\
\int_{\bdy \Omega}\scl{v^-}{\nu\lctr\nabla u^+} d\sigma
=\rev{\hat\epsilon} \int_{\bdy \Omega}(\scl{v^-}{\nu\lctr\nabla u^-}+
\scl{v^-}{\nu\lctr\nabla u^0})d\sigma.
\end{cases}
\end{equation}
Using the general Stokes theorem, see \cite[Sec. 7.3]{RosenGMA:19}, 
we have modulo compact terms
\begin{align}
  \int_{\bdy \Omega}\scl{\nu\wedg\nabla u^+}{v^+}d\sigma &\approx
  \int_{\Omega^+}|\nabla U^+|^2 dx,\\ 
  \int_{\bdy \Omega}\scl{v^-}{\nu\lctr\nabla u^-}d\sigma &\approx
  -\int_{\Omega^-_R}|\nabla U^-|^2 dx,
\end{align}
and
\begin{multline}
  \int_{\bdy \Omega}\scl{v^-}{\nu\lctr\nabla u^+}d \sigma-
  \int_{\bdy \Omega}\scl{\nu\wedg\nabla u^-}{v^+} d\sigma\\
  \approx  
  \int_{\Omega^+}\scl{\nabla V^-}{\nabla U^+} dx+
  \int_{\Omega^-_R}\scl{\nabla U^-}{\nabla\lctr V^+}dx
  \approx\int_{\R^n}\scl{\nabla V^-}{\nabla\lctr V^+}dx=0.
\end{multline}
Therefore, adding the equations \eqref{eq:helmjumpints} yields
an estimate of \eqref{eq:helmneededest} whenever we do not have
${\hat\epsilon}<0$.
When ${\hat\epsilon}$  is negative and close to $\infty$,  we instead subtract the equations to conclude.
When ${\hat\epsilon}$  is negative and close to $0$,  we can also obtain an estimate of 
\eqref{eq:helmneededest} by instead starting with a bivector potential
$V^+$ in $\Omega^+$, and a scalar potential $V^-$ in $\Omega^-$.
\end{proof}

\section{The 2D Dirac integral}  \label{sec:diracint2d}

In this section, we derive the Dirac integral equation \eqref{eq:2DDiracequ}
in 2D by combining two Helmholtz problems, and using a duality ansatz.
We start by recalling how 2D Maxwell transmission problems reduce to
Helmholtz transmission problems. 
We end by optimizing the Dirac parameters $r,\beta,\alpha',\beta'$, which is a main
step in the construction of the Dirac BIE \eqref{eq:2DDiracequ}.

\begin{ex}[Transverse magnetic (TM) scattering]  \label{ex:TMMaxw}
  We consider applications of the Dirac
  equation to the scattering of EM fields as in \eqref{eq:totalEM},
  but independent of the $e_3$-coordinate and polarized so that
\begin{equation}   \label{eq:totalEM:TM}
  F_\emm(x)= E_1(x)e_1+E_2(x)e_2+
  B_3(x) e_{12}, \qquad x\in \R^2.
\end{equation}
To write a Helmholtz equation for this EM field, we normalize by a
left Clifford multiplication and define the field
\begin{equation}
  F= F_\emm(-i\epsilon e_{12}),
\end{equation}
where $\pm$ is suppressed. Since the Clifford product is associative,
we have
\begin{equation}
  (\dirac-ik)F= ((\dirac-ik)F_\emm)(-i\epsilon e_{12})=0.
\end{equation}
Writing $F=ikU+V_1e_1+V_2e_2$, the Dirac equation $\dirac F=ikF$
amounts to $V=\nabla U$ and $\divv V= -k^2 U$, that is the Helmholtz
equation $\Delta U +k^2 U=0$ for the scalar function $U$. We have
\begin{align}
      F&= ikU+\nabla U,\\
      E&= (i\epsilon)^{-1} (\nabla U) e_{12},\\
      B_3&= (k/\epsilon) U.
\end{align}
With this setup for both domains $\Omega^\pm$, 
jump relations for the electromagnetic field specify
the jump matrix
\begin{equation}
  M= \diag\begin{bmatrix} \hat k & a & \hat\epsilon & 1 \end{bmatrix}
\end{equation}
for $F^\pm$ in the frame $\{1,\nu\tau, \nu,\tau\}$. The parameter $a$
can be chosen freely since $F_2=0$ for the field $F$.
\end{ex}

The following Dirac well-posedness result exploits that the Dirac
equation in the plane consists of two coupled Helmholtz equations.

\begin{prop}   \label{prop:diractrans}
  Consider the Dirac transmission problem \eqref{eq:diractranspr}  for 
  a bounded Lipschitz domain $\Omega^+\subset \R^2$ with connected 
  exterior $\Omega^-$. 
  Let
\begin{equation}
  M= \diag\begin{bmatrix} \hat k & \hat k/\beta & \alpha & 1 \end{bmatrix},
\end{equation}
with parameters $\beta,\alpha\in\C\setminus\{0\}$. Assume that
\begin{equation}
  \alpha,\beta\in \C\setminus [-C(\bdy \Omega), -1/C(\bdy \Omega)] \quad\text{and}\quad  \alpha/\hat k, \beta/\hat k\in \swp(k_-,k_+),
\end{equation} 
recalling that $\hat k= k_+/k_-$ and Definition~\ref{defn:hexagon}. 
Then the operator $B_2: E_{k_+}^+\mH_2\oplus E_{k_-}^-\mH_2 \to \mH_2$ given by
\begin{equation}
 B_2(f^+,f^-):= f^+-Mf^-
\end{equation}
is invertible,  where we as in Section~\ref{sec:tracespace} 
denote the spaces of traces
of solutions on $\bdy \Omega$ by $E_{k_\pm}^\pm\mH_2$. 
\end{prop}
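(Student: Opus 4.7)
The plan is to reduce the invertibility of $B_2$ to two successive Helmholtz transmission problems of the form \eqref{eq:Helmtransmprobl}, exploiting the fact that in $\R^2$ the Dirac equation decouples into scalar and bivector Helmholtz equations. Any solution $F$ to $\dirac F = ikF$ in either domain splits as $F = F_1 + F_2$, where $F_1 := ikU_1 + \nabla U_1$ is sourced by a scalar Helmholtz potential $U_1$ and $F_2 := ikU_2\, e_{12} + {*}\nabla U_2$ by a bivector potential $U_2$, with $U_1, U_2$ satisfying $\Delta U_j + k^2 U_j = 0$; this follows from Example~\ref{ex:R2prods} together with $\dirac^2 = \Delta$. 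In the frame $\{1,\nu\tau,\nu,\tau\}$, $F_1$ has components $(ikU_1, 0, \partial_\nu U_1, \partial_\tau U_1)$ while $F_2$ has components $(0, ikU_2, -\partial_\tau U_2, \partial_\nu U_2)$.

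I would next write the jump equation $f^+ - Mf^- = g$ componentwise. Since $M$ is diagonal with entries $\hat k, \hat k/\beta, \alpha, 1$, components $1$ and $2$ directly yield Dirichlet-type jumps $u_1^+ - u_1^- = g_1/(ik_+)$ and $\beta u_2^+ - u_2^- = \beta g_2/(ik_+)$. Differentiating these along $\tau$ and substituting into the tangential component~$4$ equation, the $U_1$-contributions cancel and one obtains a pure Neumann jump
\begin{equation*}
\partial_\nu u_2^+ - \partial_\nu u_2^- = g_4 - \partial_\tau g_1/(ik_+)
\end{equation*}
for $U_2$ alone. After the same substitution, the normal component~$3$ equation yields a Neumann jump for $U_1$ whose right-hand side involves the (still unknown) trace $u_2^+$:
\begin{equation*}
\partial_\nu u_1^+ - \alpha\partial_\nu u_1^- = g_3 + (1-\alpha\beta)\partial_\tau u_2^+ + \alpha\beta\partial_\tau g_2/(ik_+).
\end{equation*}
Combined with the Dirichlet jumps, this gives two Helmholtz transmission problems of the form \eqref{eq:Helmtransmprobl} arranged triangularly: a decoupled $(U_2^+, U_2^-)$-problem with parameter $\hat\epsilon := \beta$, followed by a $(U_1^+, U_1^-)$-problem with $\hat\epsilon := \alpha$ whose datum depends on the trace $\partial_\tau u_2^+$.

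Our hypotheses on $\alpha$ and $\beta$ match \eqref{eq:helmexistcond} in both cases, so I would apply Proposition~\ref{prop:helmexist} first to the $U_2$ problem and then to the $U_1$ problem, obtaining solutions $U_j^\pm \in H^1$ depending continuously on $g$. Reconstructing $F^\pm = F_1^\pm + F_2^\pm$ produces the preimage $(f^+, f^-) \in E_{k_+}^+\mH_2 \oplus E_{k_-}^-\mH_2$, establishing surjectivity of $B_2$. For injectivity, setting $g = 0$ forces both Helmholtz problems to have vanishing jump data, and Proposition~\ref{prop:helminj} applied in turn with parameters $\beta$ and $\alpha$ yields $U_j^\pm = 0$, hence $F^\pm = 0$.

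The chief technical obstacle is bookkeeping the function spaces. By the characterization \eqref{eq:H2space} of $\mH_2$ we have $g_1, g_2 \in H^{1/2}(\bdy\Omega)$ and $g_3, g_4 \in H^{-1/2}(\bdy\Omega)$; tangential differentiation sends $H^{1/2}$ into $H^{-1/2}$, so the derived Dirichlet data remain in $H^{1/2}$ and the Neumann data, including the trace $\partial_\tau u_2^+$ of the gradient of an $H^1$-Helmholtz solution, remain in $H^{-1/2}$. A secondary point is that Proposition~\ref{prop:helmexist} is phrased for data coming from an incoming wave $U^0$ whereas our reduction produces \emph{a priori} independent Dirichlet and Neumann jumps; this is routine, since any $H^1$-lift of the Dirichlet datum provides an acceptable $U^0$, and the Fredholm estimate in the proof of that proposition depends only on the jump data themselves.
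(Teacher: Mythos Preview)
Your argument is essentially the paper's: both exploit the triangular decoupling of the 2D Dirac equation into two Helmholtz transmission problems, first for the bivector part with parameter $\beta$, then for the scalar part with parameter $\alpha$, invoking Proposition~\ref{prop:helminj} for uniqueness and Proposition~\ref{prop:helmexist} for existence. Your potentials $U_1,U_2$ are simply $F_0^\pm/(ik_\pm)$ and $F_2^\pm/(ik_\pm)$ in the paper's notation, and your component manipulations reproduce the relation \eqref{eq:diracmid}.

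The one stylistic difference is in the existence half. The paper argues via Fredholm perturbation theory: it derives an a priori estimate for $B_2$ (modulo compact terms) from the triangular Helmholtz estimates, deduces semi-Fredholmness, and then obtains index zero by deformation. You instead construct the inverse directly by solving the two Helmholtz problems in sequence. Your route is slightly more concrete but, as you correctly flag, it requires Proposition~\ref{prop:helmexist} for \emph{arbitrary} $(H^{1/2},H^{-1/2})$ Dirichlet--Neumann jump data rather than data arising from a single incoming wave $U^0$. Your suggested fix via an $H^1$-lift is not quite right as stated (the lift need not solve Helmholtz), but your second remark is the correct one: the Fredholm-index-zero argument in the proof of Proposition~\ref{prop:helmexist} goes through verbatim for the map into the full jump space $H^{1/2}\oplus H^{-1/2}$, since the estimate \eqref{eq:helmneededest} only uses the jump pairings \eqref{eq:helmjumpints}. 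This is indeed routine, and the paper's own existence sketch implicitly relies on the same extension.
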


\begin{proof}
Consider the equation
\begin{equation}
   f^+ =Mf^-+ Mf^0
\end{equation}
with a given source $f^0$, and write $f^+= f_0^++f^+_1+ f^+_2 e_{12}$
with scalar functions $f_0^+$ and $f^+_2$ and a vector field $f^+_1$,
and similarly for $f^-$ and $f^0$.
We first prove uniqueness in two steps as follows. To this end we assume 
that $f^0=0$.

(i) The scalar functions $F^\pm_2$ solve the Helmholtz equation as a
consequence of $F^\pm$ solving the Dirac equation, with wavenumbers
$k_\pm$ respectively. Moreover, the vector part of $\dirac F^\pm=
ik_\pm F^\pm$ shows that
\begin{equation}  \label{eq:diracmid}
  \nabla F^\pm_2= (\nabla F^\pm_0-ik_\pm F^\pm_1)e_{12}.
\end{equation}
From the assumed jump relations for $f^\pm$ it therefore follows that
$f_2^+= \hat kf^-_2/\beta$ and $\pd_\nu f_2^+= \beta\pd_\nu(\hat
kf^-_2/\beta)$, and hence Proposition~\ref{prop:helminj} with
${\hat\epsilon}=\beta$ shows that $f_2^\pm=0$.

(ii) Next consider the scalar functions $F^\pm_0$, which also solve
the Helmholtz equation. Since $F^\pm_2=0$ by (i), we have $\nabla
F^\pm_0=ik_\pm F^\pm_1$ and obtain jump relations $f_0^+= \hat kf^-_0$
and $\pd_\nu f_0^+= \alpha\pd_\nu(\hat kf^-_0)$. Again
Proposition~\ref{prop:helminj} applies, now with
${\hat\epsilon}=\alpha$, and shows that $f_0^\pm=0$. From
\eqref{eq:diracmid} we conclude $f^\pm_1=0$ and in total $f^\pm=0$.

To show existence, it suffices by perturbation theory for Fredholm operators
to prove an estimate
\begin{equation} 
  \|f^+\|_{\mH_2}+ \|f^-\|_{\mH_2}\lesssim \|f^0\|_{\mH_2}.
\end{equation} 
This follows as in steps (i) and (ii) but instead appealing to
Proposition~\ref{prop:helmexist}. In this case, we obtain in step (i)
that $\|f^+_2\|_{\mH_2}+\|f^-_2\|_{\mH_2}\lesssim \|f^0\|_{\mH_2}$,
which in step (ii) is used to estimate $f^\pm_0$ and $f^\pm_1$.
\end{proof}

The next result is central to this paper, where we derive Dirac integral equations
by using an ansatz obtained from an auxiliary Dirac transmission problem via duality.

\begin{prop}   \label{prop:diracintwp}
  Assume  the hypothesis of Proposition~\ref{prop:diractrans}, 
and further assume that
\begin{equation}
  M'= \diag\begin{bmatrix} \alpha' & 1 & 1/\hat k & 1/(\beta' \hat k) \end{bmatrix},
\end{equation}
with parameters satisfying 
\begin{equation}
  \alpha',\beta'\in \C\setminus [-C(\bdy \Omega), -1/C(\bdy \Omega)] \quad\text{and}\quad \alpha'\hat k,\beta' \hat k\in \swp(k_+,k_-).
\end{equation}
Then the operator 
\begin{equation}   \label{eq:predirint}
  rE_{k_+}^+M'+M E_{k_-}^-
\end{equation}
is invertible on $\mH_2$, for any $r\in\C\setminus\{0\}$.
\end{prop}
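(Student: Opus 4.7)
The plan is to factor the operator as $T := rE_{k_+}^+ M' + M E_{k_-}^- = \Phi \circ \Psi$, where
\[
\Psi: \mH_2 \to E_{k_+}^+ \mH_2 \oplus E_{k_-}^- \mH_2, \qquad h \mapsto (E_{k_+}^+ M' h,\, E_{k_-}^- h),
\]
\[
\Phi: E_{k_+}^+ \mH_2 \oplus E_{k_-}^- \mH_2 \to \mH_2, \qquad (f^+, f^-) \mapsto rf^+ + Mf^-,
\]
and to show that both $\Psi$ and $\Phi$ are invertible. For each, I would prove injectivity by reducing the arising Dirac transmission problem to two decoupled Helmholtz transmission problems and applying Proposition~\ref{prop:helminj}, then promote injectivity to invertibility by the Fredholm perturbation strategy of Propositions~\ref{prop:helmexist} and \ref{prop:diractrans}.

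For $\Psi$ injectivity, suppose $\Psi(h)=0$. Then $h \in E_{k_-}^+ \mH_2$ is the trace of an interior Dirac field in $\Omega^+$ with wavenumber $k_-$, while $M'h \in E_{k_+}^- \mH_2$ is the trace of a radiating exterior Dirac field in $\Omega^-$ with wavenumber $k_+$, so we have a dual Dirac transmission problem with swapped wavenumbers and jump $\tilde f^- = M' \tilde f^+$. I introduce scalar Helmholtz potentials $U_1^\pm, U_2^\pm$ so that the scalar and pseudoscalar parts of the Dirac fields are $ik_- U_j^+$ interior and $ik_+ U_j^-$ exterior, with the vector part determined by the Dirac equation. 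Writing out the four jump relations encoded by $M' = \diag[\alpha', 1, 1/\hat k, 1/(\beta'\hat k)]$, the second and third diagonal entries are designed so that the $\pd_\tau U_2$ contributions cancel in the normal jump, yielding the decoupled pair $U_1^- = (\alpha'/\hat k) U_1^+$ and $\pd_\nu U_1^- = (1/\hat k) \pd_\nu U_1^+$. After rescaling the exterior potential this is a Helmholtz transmission problem with jump $\alpha'$, interior wavenumber $k_-$, and exterior wavenumber $k_+$, whose uniqueness is given by Proposition~\ref{prop:helminj} (applied in the swapped configuration where its role of $\hat k$ is played by $1/\hat k$) under the hypothesis $\alpha' \hat k \in \swp(k_+, k_-)$. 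Hence $U_1 \equiv 0$, and the tangent jump then collapses to a Helmholtz transmission problem for $U_2$ with jump $\beta'$; a second application of Proposition~\ref{prop:helminj} under $\beta' \hat k \in \swp(k_+, k_-)$ gives $U_2 \equiv 0$, so $h = 0$.

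For $\Phi$ injectivity, suppose $\Phi(f^+, f^-)=0$. Then $F^+ = -(1/r) M F^-$ on $\bdy\Omega$, a Dirac transmission problem in the standard orientation with multiplier $-M/r$. Since the ratio between the first and fourth diagonal entries of $-M/r$ equals $\hat k$ independently of $r$, the Helmholtz reduction decouples in the tangent jump exactly as in the proof of Proposition~\ref{prop:diractrans}: $U_2$ satisfies a Helmholtz transmission problem with jump $\beta$, forcing $U_2 \equiv 0$ by Proposition~\ref{prop:helminj} under $\beta/\hat k \in \swp(k_-, k_+)$; the normal jump then yields a Helmholtz transmission problem for $U_1$ with jump $\alpha$, forcing $U_1 \equiv 0$ under $\alpha/\hat k \in \swp(k_-, k_+)$. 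Both hypotheses are provided by the assumptions on $\alpha, \beta$ inherited from Proposition~\ref{prop:diractrans}. For surjectivity of $\Psi$ and $\Phi$, I would homotope the parameters through the connected open set of admissible values to the reference configuration $M' = M = I$, $r=1$, $k_+=k_-$, where $\Psi(h) = (E_k^+ h, E_k^- h)$ is the Hardy-type direct sum decomposition $\mH_2 = E_k^+\mH_2 \oplus E_k^-\mH_2$ and $\Phi$ is its inverse. Continuous dependence of the projections $E_k^\pm$ on $k$, together with semi-Fredholm estimates as in the proof of Proposition~\ref{prop:helmexist}, yields Fredholmness with index zero along the homotopy; combined with the injectivity above, both factors are invertible, and hence so is $T = \Phi \circ \Psi$.

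The main obstacle is the algebraic decoupling of the two Dirac transmission problems. Neither $M'$ (in the swapped orientation) nor $-M/r$ (in the standard orientation) fits the exact normalized form assumed in Proposition~\ref{prop:diractrans} even after rescaling, so the Helmholtz reduction must be redone by hand. The key bookkeeping point is that the decoupling order is opposite in the two cases (normal jump first for $\Psi$, tangent jump first for $\Phi$), and in each case the resulting Helmholtz jump parameters and wavenumber orientations must be checked to land exactly in the hypothesis of Proposition~\ref{prop:helminj}; in particular, the reversed arguments $\swp(k_+, k_-)$ appearing in the conditions on $\alpha', \beta'$ are precisely what the swapped-orientation Proposition~\ref{prop:helminj} requires.
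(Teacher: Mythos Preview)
Your factorization $T=\Phi\circ\Psi$ is the same as the paper's (up to absorbing the middle diagonal $\diag[r,1]$ and a sign into $\Phi$), and your treatment of $\Phi$ is essentially Proposition~\ref{prop:diractrans}, as you note. Your treatment of $\Psi$, however, differs genuinely from the paper's. You attack the swapped Dirac transmission problem directly: you redo the Helmholtz decoupling by hand (correctly observing that the order reverses, scalar part first), and then propose a separate Fredholm homotopy for surjectivity. The paper instead disposes of $\Psi$ in one stroke by duality: using the bilinear pairing
\[
(f,g)_{\mH}=\int_{\bdy\Omega}\scl{\nu\,\inv f}{g}\,d\sigma,
\]
one checks $(E_{k_\pm}f,g)_{\mH}=-(f,E_{k_\pm}g)_{\mH}$ and $(M'f,g)_{\mH}=(f,\widetilde M g)_{\mH}$ with $\widetilde M=\diag\big[1/\hat k,\ (1/\hat k)/\beta',\ \alpha',\ 1\big]$; this identifies $E_{k_\pm}^\mp\mH_2$ as the dual of $E_{k_\pm}^\pm\mH_2$ and shows that invertibility of $\Psi$ is equivalent to invertibility of the left factor $B_2$ with $M$ replaced by $\widetilde M$ and $k_-,k_+$ swapped. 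Since $\widetilde M$ has exactly the form required by Proposition~\ref{prop:diractrans} (with the swapped $\hat k'=1/\hat k$ and Helmholtz jump parameters $\alpha',\beta'$), that proposition applies verbatim and yields both injectivity and Fredholmness at once.

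What each approach buys: your direct route is self-contained and exposes why the decoupling order flips and why $\swp(k_+,k_-)$ (rather than $\swp(k_-,k_+)$) appears, but it forces you to re-establish the semi-Fredholm estimates of Proposition~\ref{prop:helmexist} for the swapped problem and to carry a homotopy argument through two layers of reduction. The duality trick is shorter and recycles Proposition~\ref{prop:diractrans} wholesale, so no new Fredholm estimates are needed; it also explains structurally why $M'$ has the shape it does (it is designed so that its dual $\widetilde M$ falls into the Proposition~\ref{prop:diractrans} template).
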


\begin{proof}
  We factorize \eqref{eq:predirint} via $E_{k_+}^+\mH_2\oplus  E_{k_-}^-\mH_2$ as
\begin{equation}   \label{eq:factors2d}
  \begin{bmatrix} E_{k_+}^+ & -M E_{k_-}^-\end{bmatrix}
  \begin{bmatrix} r & 0 \\ 0 & 1 \end{bmatrix}
  \begin{bmatrix} E_{k_+}^+M' \\- E_{k_-}^- \end{bmatrix}.
\end{equation}  
By Proposition~\ref{prop:diractrans}, the left factor is invertible, so
it suffices to show that the right factor also is invertible. 
To this end, we use the (non-Hermitean) complex bi-linear duality 
$$
(f,g)_\mH= \int_{\bdy \Omega} \scl{\nu(x)\inv{f}(x)}{g(x)} d\sigma(x)
$$
on $\mH_2$, where $\inv w= \sum_j (-1)^j w_j$ denotes the involution
of a multivector $w=\sum_j w_j$, $w_j\in\wedge^j \R^n$. 
It is readily verified that
\begin{align}
  (E_{k_\pm} f, g)_\mH &=  -(f,E_{k_\pm}  g)_\mH,\\
   (M' f, g)_\mH &=  (f,\widetilde M g)_\mH,
\end{align}
where $\widetilde M=\diag\begin{bmatrix}  1/\hat k & (1/\hat k)/\beta' & \alpha' & 1 \end{bmatrix}$.
This shows that in the natural way $E^\mp_{k_\pm}\mH_2$ is the 
dual space of $E^\pm_{k_\pm}\mH_2$.
Hilbert space duality theory shows that in \eqref{eq:factors2d}, invertibility of
the right factor is equivalent to invertibility of the left factor,
with $M$ replaced by $\widetilde M$, and $k_-$ and $k_+$ swapped.
\end{proof}

Consider the Dirac integral equation 
\begin{equation}  \label{eq:gendirint}
  (rE_{k_+}^+M'+M E_{k_-}^-)\tilde h= Mf^0,
\end{equation}
involving the operator from \eqref{eq:predirint},
with right-hand side specified by the jump condition in \eqref{eq:diractranspr}
and auxiliary density $\tilde h$ which we precondition as $\tilde h= P'h$ in
the proof of Theorem~\ref{thm:2Ddiracint}. 
We optimize \eqref{eq:gendirint}  by 
choosing the parameters $r,\beta,\alpha',\beta'$.
Recall that for EM fields $\alpha= \hat\epsilon$,
where our main interest is $\im\hat\epsilon\ge 0$ and $\hat k= \sqrt{\hat \epsilon}$, so that $\re\hat k\ge 0$.
We therefore consider $\alpha$ as having a prescribed value.
Clearly 
\begin{equation}  \label{eq:reflopexp}
  rE_{k_+}^+M'+M E_{k_-}^-= \tfrac 12(rM'+M+ rE_{k_+} M'-M E_{k_-}),
\end{equation}
where 
\begin{equation}  
 rM'+M= 
  \diag\begin{bmatrix} r\alpha'+\hat k &  r+\hat k/\beta & r/\hat k+\alpha & r/(\beta' \hat k)+1 \end{bmatrix}.
\end{equation}
 Let $K^v$ and $S^a$ be the static double and single layer type operators,
that is $K^v= K^v_0$, and $S^a$ is $S^a_k$ without the factor $ik$ at $k=0$.
Modulo operators of the form  $K^v_{k_\pm}-K^v$,
$S^a_{k_+}-ik_-\hat kS^a$ and $S^a_{k_-}-ik_-S^a$,  the integral operator $T=rE_{k_+} M'-M E_{k_-}$ from \eqref{eq:reflopexp}
is the entry-wise product of  
\begin{equation}    \label{eq:prediracintop2d}
  \begin{bmatrix}
   r\alpha'-\hat k & r-\hat k & r-\hat k & r/\beta'-\hat k \\
   r\alpha' -\hat k/\beta & r-\hat k/\beta & r-\hat k/\beta & r/\beta'-\hat k/\beta  \\
   \hat kr\alpha'-\alpha & \hat k r-\alpha & r/\hat k-\alpha  & r/(\beta'\hat k)-\alpha \\
   \hat k r\alpha'- 1 &  \hat k r-1 &  r/\hat k -1 & r/(\beta'\hat k)-1
  \end{bmatrix}.
\end{equation} 
and $E_k$, 
with diagonal and off-diagonal $2\times 2$ blocks replaced by $K^v$ and
$ik_- S^a$ respectively. 
 Indeed, under these approximations
\begin{multline}  
  T\approx
  \begin{bmatrix}
    -K^{\nu'} & -K^{\tau'} & \hat k(ik_-S^1) & 0 \\
    K^{\tau'} & -K^{\nu'} & 0 & \hat k(ik_-S^1) \\
    \hat k(ik_-S^{\nu\cdot\nu'}) & \hat k(ik_-S^{\nu\cdot\tau'}) & -K^\nu & K^\tau \\
     \hat k(ik_-S^{\tau\cdot\nu'}) &  \hat k(ik_-S^{\tau\cdot\tau'}) & - K^\tau & -K^\nu
  \end{bmatrix}(rM') \\
-M
  \begin{bmatrix}
    -K^{\nu'} & -K^{\tau'} & ik_-S^1 & 0 \\
    K^{\tau'} & -K^{\nu'} & 0 & ik_-S^1 \\
    ik_-S^{\nu\cdot\nu'} & ik_-S^{\nu\cdot\tau'} & -K^\nu & K^\tau \\
     ik_-S^{\tau\cdot\nu'} &  ik_-S^{\tau\cdot\tau'} & - K^\tau & -K^\nu
  \end{bmatrix}.
\end{multline}

\begin{itemize}
\item Our first choice is to set $r=\hat k$. 
This gives cancellation in the (1,2) and (4,3) elements of the operator 
$T$, which for a smooth domain $\Omega$ yields
 $T^2=0$  on $\mH_2$ modulo compact operators.
In particular the essential spectrum of $T$ is $\{0\}$.

\item Our choices for $\beta, \beta', \alpha'$ are to set
$\beta= \hat k/|\hat k|$ and $\beta'=\alpha'= \rev {\hat k}/|\hat k|$.
These choices make $\beta/\hat k>0$, $\beta' \hat k>0$ and $\alpha' \hat k>0$.
Therefore, if $\alpha\in \C\setminus  [-C(\bdy \Omega), -1/C(\bdy \Omega)]$ and $\alpha/\hat k\in\swp(k_-,k_+)$,
then Propositions~\ref{prop:diractrans} and \ref{prop:diracintwp}
guarantee invertibility of $E_{k_+}^+M'+M E_{k_-}^-$.

Furthermore, the choice of $\beta, \beta',\alpha'$
yields diagonal (1,1), (2,2) and (4,4) elements in \eqref{eq:predirint}
which are compact perturbations of invertible operators  on any Lipschitz domain, whenever $\hat k\in\C\setminus(-\infty,0]$.
Indeed,  $K^\nu_{k_\pm}-K^\nu_0$  is compact and the essential spectrum
of $K^\nu_0$ is contained in $(-1,1)$.
Moreover, when  $\re \hat k\ge 0$  then the
normalization $|\beta|= |\beta'|=|\alpha'|=1$
gives spectral points $\lambda$ for $K^\nu_0$ on the imaginary axis
with $|\lambda|\ge 1$, since
$\lambda= (1+z)/(1-z)$
maps $\{|z|=1,\re z\ge 0\}$ onto $\{\re \lambda=0, |\lambda|\ge 1\}$.
\end{itemize}

To summarize, for solving the Helmholtz/TM Maxwell transmission problem 
as described above, we have obtained the 2D Dirac integral equation
\begin{equation}  \label{eq:2DfinalDirac}
  (\hat kM'+M+  E_{k_+} (\hat kM')-M E_{k_-})\tilde h= 2M f^0,
\end{equation}
with 
\begin{align}
  M&= \diag\begin{bmatrix} \hat k & |\hat k| & \hat \epsilon & 1 \end{bmatrix},\\
  \hat k M'&= \diag\begin{bmatrix} |\hat k| & \hat k & 1 & \hat k/|\hat k| 
  \end{bmatrix}.
\end{align}

\section{The 3D Dirac integral}   \label{sec:3DDirac}

In Sections~\ref{sec:helmwp} and \ref{sec:diracint2d}, we derived an
integral equation for solving Dirac transmission problems in $\R^2$,
which applies to Helmholtz/TM Maxwell scattering. We here derive the
completely analogous integral equation in $\R^3$, with applications to
scattering for the full Maxwell equations and not only the Helmholtz
equation. 
This Dirac integral equation in 3D combines one Maxwell problem and 
two Helmholtz problems, and uses a duality ansatz. 
We end this section by optimizing the Dirac parameters $r,\beta,\gamma,\alpha',\beta',\gamma'$, 
which is a main step in the construction of the Dirac BIE \eqref{eq:3DDiracequ}.

\begin{ex}
  Maxwell's equations correspond to an electromagnetic field $F$ with
  $F_0=0=F_3$ as in \eqref{eq:totalEM}. The energy norm that we
  consider is simply the $L_2$ norm of $F^\pm$ in $\Omega^+$ and
  $\Omega^-_R$, respectively, and the corresponding function space
  $\mH_3$ on $\bdy \Omega$ is $(H^{-1/2}(\bdy\Omega))^6$, with
  tangential curls of $E$ and $B$ belonging to $H^{-1/2}(\bdy\Omega)$.

In the 3D Dirac transmission problem \eqref{eq:diractranspr}, Maxwell
scattering for the field $F=F_\emm$ from \eqref{eq:totalEM} specifies
the jump matrix
\begin{equation}  
 M= 
  \diag\begin{bmatrix} a  & 1/\hat k  & \hat k/\hat\epsilon
  & \hat k/\hat\epsilon
  & b & \hat\epsilon^{-1} & 1 & 1  \end{bmatrix}
\end{equation}
in the frame \eqref{eq:3dframe},
by the jump relations for the electric and magnetic field.
The parameters $a$ and $b$ can be chosen freely since $F_0=0= F_3$.
\end{ex}

Consider the Maxwell transmission problem \eqref{eq:maxwtransprclassi}, which 
we write in multivector notation, with $F_1^\pm= E^\pm$ and $F_2^\pm$
being the Hodge dual of $B^\pm$, as follows.
\begin{equation}  \label{eq:maxwtranspr}
\begin{cases}
   \nu\wedg f^+_1= \nu\wedg (f^-_1+f^0_1), & x\in \bdy\Omega,\\
   \nu\lctr f^+_2=(\hat k/\hat \epsilon) \nu\lctr (f^-_2+f^0_2), & x\in \bdy\Omega,\\
   \dirac F^+= ik_+ F^+, F^+_0=F^+_3=0, &x\in\Omega^+,\\
   \dirac F^-= ik_- F^-, F^-_0=F^-_3=0, &x\in\Omega^-,\\
   (x/|x|-1)F^-= o(|x|^{-1}e^{\im k_- |x|}), & x\to\infty.
   \end{cases}
\end{equation}

We require the following Maxwell versions of the results in 
Section~\ref{sec:helmwp}.

\begin{prop}   \label{prop:maxwinj}  
  Consider the Maxwell transmission problem \eqref{eq:maxwtranspr} for 
  a bounded Lipschitz domain $\Omega^+\subset \R^3$ with connected 
  exterior $\Omega^-$. 
Assume that the incoming wave vanishes
so that $f^0=0$.
If  $\hat\epsilon$ from  \eqref{eq:maxwtranspr}
satisfies 
\begin{equation}
{\hat\epsilon} /\hat k \in \swp(k_-,k_+),
\end{equation}

recalling that $\hat k= k_+/k_-$ and Definition~\ref{defn:hexagon}, 
then $F^+=F^-=0$ identically.
\end{prop}

\begin{proof}
  Similar to the proof of Proposition~\ref{prop:helminj}, we use
  the jump relations to obtain
\begin{equation}
  \int_{\bdy \Omega}\scl{f_1^+}{\nu\lctr f_2^+}d\sigma = \rev {\hat k/\hat \epsilon} \int_{\bdy \Omega} \scl{f_1^-}{\nu\lctr f_2^-} d\sigma.
\end{equation}
We then apply a Stokes theorem for $\Omega^+$ and $\Omega^-_R$,
to obtain  
\begin{multline}     \label{eq:maxwident}
\frac{\hat k}{\hat \epsilon}\left(\frac 1i\int_{\Omega^+} (k_+|F_2^+|^2-\rev k_+|F_1^+|^2) dx\right)
\\+ \frac{|{\hat k}|^2}{|\hat \epsilon|^2}\left( \frac 1i\int_{\Omega^-_R} (k_-|F_2^-|^2-\rev k_-|F_1^-|^2) dx
+\tfrac 12\int_{|x|=R} |F^-|^2 d\sigma \right)\to 0,
\end{multline}
as $R\to\infty$. We here used that $\nabla\lctr F^\pm_2= ik_\pm
F^\pm_1$, $\nabla\wedg F^\pm_1= ik_\pm F^\pm_2$, and by the radiation
condition that $\scl{F_1^-}{\nu\lctr F_2^-}\approx |F^-_1|^2\approx
|F^-_2|^2\approx\tfrac 12|F^-|^2$ on $|x|=R$. Using
\eqref{eq:maxwident}, the result follows similarly to the proof of
Proposition~\ref{prop:helminj}.
\end{proof}

\begin{prop}   \label{prop:maxwexist}
    Let $\Omega^+\subset \R^3$ be a bounded Lipschitz domain
    with connected exterior $\Omega^-$. 
     Then there exists
  $1\le C(\bdy\Omega)<\infty$ such that if
   $\hat\epsilon$ from  \eqref{eq:maxwtranspr}
satisfies 
\begin{equation}   \label{eq:maxwcondforexist}
  {\hat\epsilon},  \hat k^2/\hat \epsilon  \in \C\setminus [-C(\bdy \Omega), -1/C(\bdy \Omega)] \quad\text{and}\quad {\hat\epsilon}/\hat k \in \swp(k_-,k_+),
\end{equation}
recalling that $\hat k= k_+/k_-$ and Definition~\ref{defn:hexagon}, 
then there exists a unique solution $F^+\in L_2(\Omega^+)$, $F^-\in
L_2(\Omega^-_R)$ to the Maxwell transmission problem
\eqref{eq:maxwtranspr},  depending contiuously on the 
trace $f^0\in \mH_3$ of the incoming 
electromagnetic wave $F^0$. 
\end{prop}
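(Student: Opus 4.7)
The plan is to mimic the Fredholm perturbation architecture used in Proposition~\ref{prop:helmexist}, now for the Maxwell problem \eqref{eq:maxwtranspr}. I would reformulate the problem as inversion of a bounded operator $T_{\hat\epsilon,\hat\mu,k_+,k_-}$ between the subspaces of $\mH_3$ cut out by the Maxwell constraints $F^\pm_0=F^\pm_3=0$ together with $\dirac F^\pm = ik_\pm F^\pm$ and the radiation condition. At the distinguished point $\hat\epsilon=\hat\mu=1$, $k_+=k_-$, the jump relations collapse to $f^+=f^-+f^0$ and the Cauchy integral \eqref{eq:fieldCauchy} provides an explicit inverse. The parameter region $\{(\hat\epsilon,\hat\mu)\in(\C\setminus[-C,-1/C])^2\}\cap\{{\hat\epsilon}/\hat k\in\swp(k_-,k_+)\}$, combined with $k_\pm\ne 0$ and $\im k_\pm\ge 0$, is open and connected, and continuity of the Hodge projections onto the Maxwell trace subspaces follows from the vanishing of the $k$-perturbed cohomology, exactly as in step~(i) of Proposition~\ref{prop:helmexist}. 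Since Proposition~\ref{prop:maxwinj} supplies injectivity throughout this region, it suffices to exhibit an a priori energy estimate
\begin{equation*}
  \|F^+\|_{L_2(\Omega^+)} + \|F^-\|_{L_2(\Omega^-_R)} \lesssim \|f^0\|_{\mH_3} + (\text{compact terms}),
\end{equation*}
which gives the semi-Fredholm property and then index zero by homotopy.

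To establish this estimate I would build Hodge-type potentials for both $F^\pm_1$ (electric) and $F^\pm_2$ (the Hodge dual of magnetic). On one side of $\bdy\Omega$ I would take a bivector/vector potential pair and on the other a scalar potential, exactly as in \eqref{eq:hodgekpotential}, appealing to the $k$-perturbed Hodge decompositions of \cite{AxMcIntosh:04, AxThesisPub4:06}; compactness of the field-to-potential map is built into those decompositions. The potentials are then extended to compactly supported multivector fields on $\R^3$ with $L_2$ derivatives.

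The main step is to pair the two jump relations in \eqref{eq:maxwtranspr} against the traces of the potentials just constructed, and to apply the general Stokes theorem from \cite[Sec.~7.3]{RosenGMA:19}. Modulo compact terms, the boundary integrals collapse to interior integrals of $|F^\pm_1|^2$ and $|F^\pm_2|^2$ over $\Omega^\pm$, while the cross-coupling boundary terms merge into a single integral over $\R^3$ that vanishes by the Hodge orthogonality of the ranges of $\nabla\wedg$ and $\nabla\lctr$, just as in the last step of the proof of Proposition~\ref{prop:helmexist}. Adding (respectively subtracting) the two paired identities, weighted by $\hat k/\hat\epsilon$, yields the desired bound when $\hat\epsilon$ is outside a neighbourhood of $0$ (respectively of $\infty$) in the negative real axis; the intermediate negative range is precisely the interval $[-C(\bdy\Omega),-1/C(\bdy\Omega)]$ excluded by hypothesis. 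The analogous argument using the dual pair of potentials, with the roles of $F_1$ and $F_2$ swapped, covers the ratio $\hat\mu=\hat k^2/\hat\epsilon$. The main obstacle I anticipate is the duality bookkeeping required to capture the electric and magnetic energies simultaneously: Helmholtz required only one constitutive exclusion on $\hat\epsilon$, whereas Maxwell couples the two ratios, and the two exclusions in \eqref{eq:maxwcondforexist} must be combined through the $E\leftrightarrow B$, $\hat\epsilon\leftrightarrow\hat\mu$ symmetry to close the estimate in all four sign regimes of $(\hat\epsilon,\hat\mu)$.
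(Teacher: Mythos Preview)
Your proposal is correct and takes essentially the same Fredholm-perturbation-plus-Hodge-potential route as the paper. Two details to get right when writing it up: the potential chain in the Maxwell case is one step longer than in \eqref{eq:hodgekpotential} (scalar $U_0^-$ and vector $U_1^-$ in $\Omega^-$; bivector $U_2^+$ and trivector $U_3^+$ in $\Omega^+$), and besides the two jump relations in \eqref{eq:maxwtranspr} you also need the derived Gauss-law jumps for $\nu\lctr f_1$ and $\nu\wedg f_2$, obtained from the given jumps plus $\dirac F^\pm=ik_\pm F^\pm$; pairing these four relations with the four potential traces gives the $\hat\epsilon$ and $\hat k^2/\hat\epsilon$ exclusions in a single pass, so no separate $E\leftrightarrow B$ swap is needed.
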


\begin{proof}
 (i)
The proof is similar to that of Proposition~\ref{prop:helmexist},
replacing the scalar function $ik_\pm U^\pm$ by the vector field $F^\pm_1$,
and  the vector field $\nabla U^\pm$ by  the bivector field $F^\pm_2$.
We first use Fredholm theory to reduce the problem to an estimate of
\begin{equation}  \label{eq:maxwneededest}
  \int_{\Omega^+}|F^+|^2 dx+\int_{\Omega^-_R}|F^-|^2 dx 
\end{equation}
by the norm of $f^0$ and compact terms.
We define function spaces $\widetilde \mH^{2,\pm}_{k_\pm}$ of 
Maxwell fields $F^\pm= F^\pm_1+F^\pm_2$ in $\Omega^\pm$
($F^-$ satisfying the radiation condition in $\Omega^-$).
  The norm of $F^+\in \widetilde \mH^{2,+}_{k_+}$ is $\|F^+\|_{L_2(\Omega^+)}$
  and the norm of $F^-\in \widetilde \mH^{2,-}_{k_-}$ is $\|F^-\|_{L_2(\Omega^-_R)}$,
  with a fixed large $R$. 
  Furthermore we define the closed subspace $\mH^2_{k_-}\subset\mH_3$, consisting of $f= f_{1\ta}+ f_{1\no}+ f_{2\ta}+ f_{2\no}$,
  with $f_{1\ta}, f_{1\no}$ being tangential and normal vector fields
  and $f_{2\ta}, f_{2\no}$ being tangential and normal bivector fields,
  satisfying the constraints $d_\ta f_{1\ta}= ik_- f_{2\ta}$ and
  $\del_\ta(\nu\lctr f_{2\no})= -ik_- \nu\lctr f_{1\no}$.
  The traces of incoming Maxwell fields $F^0$ all belong to $\mH^2_{k_-}$.
  
The transmission problem \eqref{eq:maxwtranspr} defines a bounded 
linear map
\begin{equation}
   T_{\hat\epsilon, k_+, k_-}: \widetilde \mH_{k_+}^{2,+}
   \oplus \widetilde \mH_{k_-}^{2,-}\to \mH^2_{k_-}.
\end{equation}
Like in  Proposition~\ref{prop:helmexist}, we can continuously perturb
the operator and spaces to the case  $k_+=k_-$ and that $\hat\epsilon=1$,
and, given (ii) below, conclude by Fredholm perturbation theory that 
$T_{\hat\epsilon, k_+, k_-}$ is a Fredholm operator with index zero
and hence an isomorphism whenever \eqref{eq:maxwcondforexist} holds.

(ii) 
To establish the estimate  of  \eqref{eq:maxwneededest}, in
$\Omega_R^-$ we construct potentials such that
\begin{equation}
\begin{cases}
  F^-_2= \nabla\wedg U^-_1,\\
  F^-_1= \nabla\wedg U^-_0+ ik_- U^-_1,
\end{cases}
\end{equation}
and in $\Omega^+$ we construct potentials such that
\begin{equation}
\begin{cases}
  F^+_2= \nabla\lctr U^+_3+ ik_+ U^+_2,\\
  F^+_1= \nabla\lctr U^+_2,
\end{cases}
\end{equation}
where subscript $j$ refers to a $\wedge^j \R^3$ valued field.
 (In traditional terminology, $U^-_0$ is the scalar potential and $U^-_1$
is the vector potential to the electromagnetic field $F^-$.) 
 The existence and estimates of such potentials 
follow from the Hodge decompositions in \cite{AxMcIntosh:04},
by writing out the homogeneous parts of the multivector fields. 

We consider first $F^\pm_1$, where we pair the jump relation for 
$\nu\wedg f_1^\pm$ with $u_2^+$. From the jump relation for $\nu\lctr
f_2^\pm$ and Maxwell's equations, we obtain
\begin{equation}  \label{eq:gaussjump1}
\nu\lctr f^+_1=\hat \epsilon^{-1} \nu\lctr (f^-_1+f^0_1),
\end{equation}
which we pair with $u_0^-$. We get
\begin{equation}    \label{eq:maxexest1}
\begin{cases}
\int_{\bdy \Omega}\scl{\nu\wedg f_1^+}{u_2^+} d\sigma=
\int_{\bdy \Omega}(\scl{\nu\wedg f_1^-}{u_2^+}d\sigma+
\scl{\nu\wedg f_1^0}{u_2^+})d\sigma,\\
\int_{\bdy \Omega}\scl{u_0^-}{\nu\lctr f_1^+} d\sigma
=\rev{1/\hat \epsilon} \int_{\bdy \Omega}(\scl{u_0^-}{\nu\lctr f_1^-}+
\scl{u_0^-}{\nu\lctr f_1^0}) d\sigma.
\end{cases}
\end{equation}

Next consider $F^\pm_2$, where we pair the jump relation for $\nu\lctr
f_2^\pm$ with $u_1^-$. From the jump relation for $\nu\wedg f_1^\pm$
and Maxwell's equations, we obtain
\begin{equation}  \label{eq:gaussjump2}
\nu\wedg f^+_2=\hat k^{-1} \nu\wedg (f^-_2+f^0_2),
\end{equation}
which we pair with $u_3^+$. We get
\begin{equation}  \label{eq:maxexest2}
\begin{cases}
\int_{\bdy \Omega}\scl{\nu\wedg f_2^+}{u_3^+} d\sigma=(1/\hat k)
\int_{\bdy \Omega}(\scl{\nu\wedg f_2^-}{u_3^+}+
\scl{\nu\wedg f_2^0}{u_3^+})d\sigma,\\
\int_{\bdy \Omega}\scl{u_1^-}{\nu\lctr f_2^+} d\sigma
=\rev{\hat k/\hat \epsilon} \int_{\bdy \Omega}(\scl{u_1^-}{\nu\lctr f_2^-}+
\scl{u_1^-}{\nu\lctr f_2^0}) d\sigma.
\end{cases}
\end{equation}

Applying the general Stokes theorem, see \cite[Sec. 7.3]{RosenGMA:19},
to \eqref{eq:maxexest1} and \eqref{eq:maxexest2} and summing the so
obtained estimates yield an estimate of \eqref{eq:maxwneededest},
similar to the proof of Proposition~\ref{prop:helmexist}. A main idea
is that the potentials $U^\pm_j$ depend compactly on the fields
$F^\pm_j$, and for details of the estimates we refer to \cite[Lem.
4.9, 4.17]{AxThesisPub4:06}.
\end{proof}

With this solvability result for Maxwell's equations, we next derive 
solvability results for the 3D Dirac equation similarly to what was done
in 2D in Section~\ref{sec:diracint2d}.

\begin{prop}  \label{prop:diractrans3d}
  Consider the Dirac transmission problem \eqref{eq:diractranspr}  for 
  a bounded Lipschitz domain $\Omega^+\subset \R^3$ with connected 
  exterior $\Omega^-$. 
Let
\begin{equation}   \label{eq:dirjump3dM}
  M= \diag\begin{bmatrix} \hat k/(\alpha\beta) & 1/\hat k & \hat k/\alpha & \hat k/\alpha  & 1/\gamma & 1/\alpha 
   & 1& 1  \end{bmatrix},
\end{equation}
with parameters $\alpha,\beta,\gamma\in\C\setminus\{0\}$.
Assume that 
\begin{equation}
  \alpha,  \hat k^2/\alpha,  \beta,\gamma\in \C\setminus [-C(\bdy \Omega), -1/C(\bdy \Omega)] \quad\text{and}\quad \alpha/\hat k, \beta/\hat k, \gamma/\hat k \in \swp(k_-,k_+),
\end{equation} 
recalling that $\hat k= k_+/k_-$ and Definition~\ref{defn:hexagon}. 
Then the operator $B_3:E_{k_+}^+\mH_3\oplus E_{k_-}^-\mH_3 \to \mH_3$ given by
\begin{equation}
  B_3(f^+,f^-):= f^+-Mf^-
\end{equation}
is invertible,  where we as in Section~\ref{sec:tracespace} 
denote the spaces of traces
of solutions on $\bdy \Omega$ by $E_{k_\pm}^\pm\mH_3$. 
\end{prop}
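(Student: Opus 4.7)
The plan is to extend the strategy of Proposition~\ref{prop:diractrans} to three dimensions by decomposing the 3D Dirac transmission problem into two scalar Helmholtz transmission problems (for the scalar component $F_0$ and the pseudoscalar component $F_3$) together with a Maxwell transmission problem (for the vector and bivector parts $(F_1,F_2)$). Uniqueness will follow from Propositions~\ref{prop:helminj} and \ref{prop:maxwinj} applied to the three subproblems, and existence from the same reduction combined with Propositions~\ref{prop:helmexist} and \ref{prop:maxwexist} and Fredholm perturbation theory.

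For \textbf{injectivity}, assume $f^0=0$; each multivector component is then Helmholtz, since $\dirac^2=\Delta$ preserves degree. I proceed in three steps. \emph{Step (i):} The scalar entry of $M$ gives $f_0^+=(\hat k/(\alpha\beta))f_0^-$, and taking the normal component of the degree-one Dirac identity $\nabla F_0+\nabla\lctr F_2=ikF_1$ --- using $\nu\lctr(\nabla\lctr F_2)=\nu\lctr(\nabla_T\lctr F_2)$, which holds because $\nu\lctr(\nu\lctr\cdot)=0$ --- reduces $\pd_\nu f_0$ to a boundary expression depending only on the normal bivector components of $F_2$ (positions $3,4$ of $M$, both jumping by $\hat k/\alpha$) and the normal component of $F_1$ (position $6$, jumping by $1/\alpha$). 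Combined with $k_+=\hat k k_-$, this gives $\pd_\nu f_0^+=\beta\,\pd_\nu\bigl((\hat k/(\alpha\beta))f_0^-\bigr)$, so Proposition~\ref{prop:helminj} with $\hat\epsilon=\beta$ yields $F_0^\pm=0$. \emph{Step (ii):} Symmetrically, the pseudoscalar entry $f_3^+=(1/\gamma)f_3^-$ and the degree-two Dirac identity $dF_1+\nabla\lctr F_3=ikF_2$ lead to a boundary expression for $\pd_\nu f_3$ depending only on the tangential bivector component of $F_2$ (position $2$, jumping by $1/\hat k$) and the tangential components of $F_1$ (positions $7,8$, jumping by $1$), producing a Helmholtz transmission problem with $\hat\epsilon=\gamma$ and hence $F_3^\pm=0$. \emph{Step (iii):} With $F_0^\pm=F_3^\pm=0$, the Dirac equation collapses to the free Maxwell system $\del F_1=0$, $dF_1=ikF_2$, $\del F_2=ikF_1$, $dF_2=0$, and the remaining entries of $M$ (positions $2,3,4,6,7,8$) match the Maxwell jumps in \eqref{eq:maxwtranspr} with $\hat\epsilon=\alpha$; Proposition~\ref{prop:maxwinj} forces $F_1^\pm=F_2^\pm=0$, completing the proof of injectivity.

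For \textbf{existence}, by Fredholm perturbation theory, as in the proof of Proposition~\ref{prop:diractrans}, it suffices to prove the a priori estimate $\|f^+\|_{\mH_3}+\|f^-\|_{\mH_3}\lesssim\|f^0\|_{\mH_3}$. This follows from the same three-step reduction, with Proposition~\ref{prop:helminj} replaced by Proposition~\ref{prop:helmexist} in steps (i) and (ii) (whose hypotheses hold because $\beta,\gamma\in\C\setminus[-C(\bdy\Omega),-1/C(\bdy\Omega)]$ and $\beta/\hat k,\gamma/\hat k\in\swp(k_-,k_+)$), and Proposition~\ref{prop:maxwinj} replaced by Proposition~\ref{prop:maxwexist} in step (iii); here the assumption $\hat k^2/\alpha\in\C\setminus[-C(\bdy\Omega),-1/C(\bdy\Omega)]$ plays precisely the role of the magnetic condition demanded by Proposition~\ref{prop:maxwexist}.

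The principal obstacle is the Clifford-algebraic bookkeeping in step (i) (and symmetrically (ii)): a priori, $\pd_\nu f_0$ involves interior values of $F_2$, and only the identity $\nu\lctr(\nabla\lctr F_2)=\nu\lctr(\nabla_T\lctr F_2)$, combined with an explicit component analysis of the tangential surface divergence of a bivector field on $\bdy\Omega$, shows that exactly the two components of $F_2$ sharing the common jump factor $\hat k/\alpha$ contribute, so that the heterogeneous jumps of $F_1$ and $F_2$ recombine cleanly into a single effective Helmholtz jump with $\hat\epsilon=\beta$. Checking this reduction carefully, so that none of the tangential bivector or pseudoscalar jumps pollute the scalar transmission problem, is where the particular diagonal structure of $M$ in \eqref{eq:dirjump3dM} is essential.
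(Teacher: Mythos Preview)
Your proof is correct and follows essentially the same route as the paper: the three-step decomposition into a Helmholtz problem for $F_0$ (with effective $\hat\epsilon=\beta$), a Helmholtz problem for $F_3$ (with effective $\hat\epsilon=\gamma$), and a Maxwell problem for $F_1+F_2$ (with $\hat\epsilon=\alpha$), followed by the Fredholm reduction for existence, is exactly the paper's argument. Your explicit verification via $\nu\lctr(\nu\lctr\cdot)=0$ that only positions $3,4,6$ (respectively $2,7,8$) enter the Neumann data for $F_0$ (respectively $F_3$) is a welcome elaboration of what the paper compresses into the phrase ``from this and \eqref{eq:dirjump3dM}, we conclude''.
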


\begin{proof}
The proof is similar to that of Proposition~\ref{prop:diractrans},
but now using that in 3D the Dirac equation contains two Helmholtz equations
along with Maxwell's equations.
We write 
\begin{equation}
   F^\pm= F^\pm_0 + F^\pm_1+F^\pm_2+ F^\pm_3
\end{equation}
and proceed in three steps. We first prove uniqueness.

(i)
The scalar functions $F_0^\pm$ solve the Helmholtz equation as a
consequence of $F^\pm$ solving the Dirac equation, which also shows
\begin{equation} 
  \nabla F^\pm_0= ik_\pm F^\pm_1-\nabla\lctr F^\pm_2.
\end{equation}
From this and \eqref{eq:dirjump3dM}, we conclude that $\pd_\nu
f_0^+=\beta \pd_\nu(\hat k f^-_0/(\alpha\beta))$ and $f_0^+= \hat k
f^-_0/(\alpha\beta)$ and hence Proposition~\ref{prop:helminj} with
$\hat \epsilon= \beta$ shows that $F_0^\pm=0$.

(ii)
Writing $F^\pm_3= U^\pm_3 e_{123}$, as in (i) the scalar functions
$U^\pm_3$ solve the Helmholtz equation. From the Dirac equation we
have
\begin{equation}  
  \nabla\lctr F^\pm_3= ik_\pm F^\pm_2-\nabla\wedg F^\pm_1.
\end{equation}
From this and \eqref{eq:dirjump3dM}, we conclude that 
$\pd_\nu u_3^+=\gamma \pd_\nu( u^-_3/\gamma)$ and 
$u_3^+= u^-_3/\gamma$ 
and hence Proposition~\ref{prop:helminj} with 
$\hat \epsilon= \gamma$ shows that $F_3^\pm=0=U^\pm_3$.

(iii)
From (i) and (ii) we conclude that $F^\pm$ solve Maxwell's equations
\eqref{eq:maxwtranspr} with $\hat\epsilon=\alpha$, and
Proposition~\ref{prop:maxwinj} applies to show that $F^\pm=0$.

To show existence, it suffices by perturbation theory for Fredholm operators
to prove an estimate
\begin{equation} 
  \|f^+\|_{\mH_3}+ \|f^-\|_{\mH_3}\lesssim \|f^0\|_{\mH_3}.
\end{equation} 
This follows similarly to steps (i)--(iii) by instead appealing to
Propositions~\ref{prop:helmexist} and \ref{prop:maxwexist}.
\end{proof}

\begin{prop}   \label{prop:diracintwp3d}
  Assume  the hypothesis of Proposition~\ref{prop:diractrans3d}, 
and further assume that 
\begin{equation}
  M'= \diag\begin{bmatrix} 1/\alpha' & 1/\gamma' & 1 & 1 & \hat k & 1/(\hat k\alpha'\beta') &
  1/(\alpha'\hat k) & 1/(\alpha'\hat k)  \end{bmatrix},
\end{equation}
with parameters satisfying 
\begin{equation}
  \alpha',  \alpha' \hat k^2,  \beta',\gamma'\in \C\setminus [-C(\bdy \Omega), -1/C(\bdy \Omega)] \quad\text{and}\quad \alpha'\hat k, \beta'\hat k, \gamma' \hat k\in \swp(k_+,k_-).
\end{equation}
Then the operator 
\begin{equation}   \label{eq:predirint3d}
  rE_{k_+}^+M'+M E_{k_-}^-
\end{equation}
is invertible on $\mH_3$, for any $r\in\C\setminus\{0\}$.
\end{prop}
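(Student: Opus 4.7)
The plan is to mimic the 2D proof of Proposition~\ref{prop:diracintwp} step by step, substituting $\mH_3$ for $\mH_2$ and invoking Proposition~\ref{prop:diractrans3d} in place of Proposition~\ref{prop:diractrans}. First, I would factorize the operator \eqref{eq:predirint3d} through $E_{k_+}^+\mH_3\oplus E_{k_-}^-\mH_3$ as in \eqref{eq:factors2d},
\[
\begin{bmatrix} E_{k_+}^+ & -ME_{k_-}^- \end{bmatrix}
\begin{bmatrix} r & 0 \\ 0 & 1 \end{bmatrix}
\begin{bmatrix} E_{k_+}^+M' \\ -E_{k_-}^- \end{bmatrix}.
\]
The left factor is precisely the operator $B_3$ of Proposition~\ref{prop:diractrans3d}, and the hypotheses on $\alpha,\beta,\gamma$ in the present statement are imported directly from that proposition, so the left factor is invertible. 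The middle factor is invertible for $r\ne 0$, so only the right factor requires genuine work.

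For the right factor I would use the same non-Hermitian bi-linear pairing $(f,g)_\mH = \int_{\bdy\Omega}\scl{\nu\inv f}{g}\,d\sigma$ employed in the 2D proof. The identities $(E_{k_\pm}f,g)_\mH=-(f,E_{k_\pm}g)_\mH$ hold on $\mH_3$ by exactly the same algebraic reason as on $\mH_2$, so $E_{k_\pm}^\mp\mH_3$ is non-degenerately dual to $E_{k_\pm}^\pm\mH_3$. One then computes the diagonal matrix $\widetilde M$ satisfying $(M'f,g)_\mH=(f,\widetilde M g)_\mH$, and Hilbert space duality reduces invertibility of the right factor to invertibility of $\begin{bmatrix} E_{k_-}^+ & -\widetilde M E_{k_+}^-\end{bmatrix}$ on $E_{k_-}^+\mH_3\oplus E_{k_+}^-\mH_3$, which is a second application of Proposition~\ref{prop:diractrans3d} with $k_+$ and $k_-$ swapped and jump matrix $\widetilde M$.

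What makes this work is that Clifford multiplication by $\nu$, combined with the involution, acts on the ordered frame $\{1,\tau\theta,\theta\nu,\nu\tau,\nu\tau\theta,\nu,\tau,\theta\}$ by a signed permutation that swaps each even-grade slot with the odd-grade slot of complementary degree, namely $(1\,6)(2\,5)(3\,8)(4\,7)$. Conjugating the diagonal $M'$ by this action reorders its entries, and one verifies slot by slot that the resulting $\widetilde M$ takes exactly the form \eqref{eq:dirjump3dM} at the dual wavenumber $1/\hat k$, with dual parameters $(\alpha_{\rm d},\beta_{\rm d},\gamma_{\rm d})=(\alpha',\beta',\gamma')$; the precise diagonal entries prescribed for $M'$ in the statement are engineered so that this match succeeds. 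The four conditions on $\alpha',\alpha'\hat k^2,\beta',\gamma'$ and on $\alpha'\hat k,\beta'\hat k,\gamma'\hat k$ then translate term by term into the hypotheses of Proposition~\ref{prop:diractrans3d} for the dual problem, using that $\C\setminus[-C(\bdy\Omega),-1/C(\bdy\Omega)]$ is invariant under $z\mapsto 1/z$ and that $\swp(k_-,k_+)$ swaps accordingly to $\swp(k_+,k_-)$.

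The main technical obstacle is precisely the indexed bookkeeping of the previous paragraph: verifying, one diagonal slot at a time, that the signed permutation induced by $\nu$-multiplication and the involution carries the eight entries of the chosen $M'$ to those prescribed by \eqref{eq:dirjump3dM} at the swapped wavenumber, and that the accompanying parameter constraints all align. Once this is in place, invertibility of the right factor follows from the dual application of Proposition~\ref{prop:diractrans3d}, and composing the three invertible factors gives invertibility of \eqref{eq:predirint3d} on $\mH_3$.
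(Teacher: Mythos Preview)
Your proposal is correct and follows exactly the approach the paper intends: the paper's own proof consists of a single sentence stating that the result follows by duality from Proposition~\ref{prop:diractrans3d}, entirely analogously to the proof of Proposition~\ref{prop:diracintwp}. You have simply unpacked that analogy, and your computation of the signed permutation $(1\,6)(2\,5)(3\,8)(4\,7)$ and the resulting $\widetilde M$ matching \eqref{eq:dirjump3dM} at the swapped wavenumber with parameters $(\alpha',\beta',\gamma')$ is the correct bookkeeping that the paper suppresses.
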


\begin{proof}
  This follows by duality from Proposition~\ref{prop:diractrans3d}, 
  entirely analogously 
  to the proof of Proposition~\ref{prop:diracintwp}.
\end{proof}

Consider the 3D Dirac integral equation 
\begin{equation}    \label{eq:3dgendirint}
  (rE_{k_+}^+M'+M E_{k_-}^-)\tilde h= Mf^0,
\end{equation}
involving the operator from \eqref{eq:predirint3d}, 
with right-hand side specified by the jump condition in \eqref{eq:diractranspr}
and auxiliary density $\tilde h$ which we precondition as $\tilde h= P'h$ in
the proof of Theorem~\ref{thm:3Ddiracint}. 
Similar to what we did for the 2D twin  \eqref{eq:gendirint}, we optimize
\eqref{eq:3dgendirint} by 
choosing the parameters $r,\beta,\gamma,\alpha',\beta',\gamma'$.
Recall that for EM fields $\alpha=\hat\epsilon$.
We therefore consider $\alpha$ as having a prescribed value.
Writing \eqref{eq:predirint3d} as in \eqref{eq:reflopexp}, we have in $\R^3$
that
\begin{equation}
  rM'+M= \diag\begin{bmatrix}\tfrac r{\alpha'}+\tfrac{\hat k}{\alpha\beta} & \tfrac r{\gamma'}+\tfrac 1{\hat k} & r+\tfrac{\hat k}\alpha & r+\tfrac{\hat k}\alpha  & r\hat k+\tfrac 1\gamma
  & \tfrac {r}{\hat k\alpha'\beta'}+\tfrac 1 \alpha &
  \tfrac {r}{\alpha'\hat k}+1 & \tfrac {r}{\alpha'\hat k}+1  \end{bmatrix}.
\end{equation}
 Modulo operators of the form  $K^v_{k_\pm}-K^v$,
$S^a_{k_+}-ik_-\hat kS^a$ and $S^a_{k_-}-ik_-S^a$,  the integral operator $T=rE_{k_+} M'-M E_{k_-}$ from \eqref{eq:reflopexp}
is the entry-wise product of  
\begin{equation}    \label{eq:prediracintop3d}
  \begin{bmatrix}
\tfrac r{\alpha'}-\tfrac{\hat k}{\alpha\beta} & \tfrac r{\gamma'}-\tfrac{\hat k}{\alpha\beta} & r-\tfrac{\hat k}{\alpha\beta} & r-\tfrac{\hat k}{\alpha\beta}  & r\hat k^2-\tfrac{\hat k}{\alpha\beta} & \tfrac {r}{\alpha'\beta'}-\tfrac{\hat k}{\alpha\beta} &
  \tfrac {r}{\alpha'}-\tfrac{\hat k}{\alpha\beta} & \tfrac {r}{\alpha'}-\tfrac{\hat k}{\alpha\beta} \\
  \tfrac r{\alpha'}-\tfrac 1{\hat k} & \tfrac r{\gamma'}-\tfrac 1{\hat k} & r-\tfrac 1{\hat k} & r-\tfrac 1{\hat k} & r\hat k
^2-\tfrac 1{\hat k} & \tfrac {r}{\alpha'\beta'}-\tfrac 1{\hat k} &
  \tfrac {r}{\alpha'}-\tfrac 1{\hat k} & \tfrac {r}{\alpha'}-\tfrac 1{\hat k}  \\
\tfrac r{\alpha'}-\tfrac{\hat k}\alpha & \tfrac r{\gamma'}-\tfrac{\hat k}\alpha & r-\tfrac{\hat k}\alpha & r-\tfrac{\hat k}\alpha & r\hat k^2-\tfrac{\hat k}\alpha & \tfrac {r}{\alpha'\beta'}-\tfrac{\hat k}\alpha &
  \tfrac {r}{\alpha'}-\tfrac{\hat k}\alpha & \tfrac {r}{\alpha'}-\tfrac{\hat k}\alpha  \\
 \tfrac  r{\alpha'}-\tfrac{\hat k}\alpha & \tfrac r{\gamma'}-\tfrac{\hat k}\alpha & r-\tfrac{\hat k}\alpha & r-\tfrac{\hat k}\alpha & r\hat k^2-\tfrac{\hat k}\alpha & \tfrac {r}{\alpha'\beta'}-\tfrac{\hat k}\alpha &
  \tfrac {r}{\alpha'}-\tfrac{\hat k}\alpha & \tfrac {r}{\alpha'}-\tfrac{\hat k}\alpha  \\
  \tfrac {r\hat k}{\alpha'}-\tfrac 1\gamma & \tfrac {r\hat k}{\gamma'}-\tfrac 1\gamma & r\hat k-\tfrac 1\gamma & r\hat k-\tfrac 1\gamma & r\hat k-\tfrac 1\gamma & \tfrac {r}{\hat k\alpha'\beta'}-\tfrac 1\gamma &
  \tfrac {r}{\alpha'\hat k}-\tfrac 1\gamma & \tfrac {r}{\alpha'\hat k}-\tfrac 1\gamma  \\
  \tfrac {r\hat k}{\alpha'}-\tfrac 1 \alpha & \tfrac {r\hat k}{\gamma'}-\tfrac 1 \alpha & r\hat k-\tfrac 1 \alpha & r\hat k-\tfrac 1 \alpha & r\hat k-\tfrac 1 \alpha & \tfrac {r}{\hat k\alpha'\beta'}-\tfrac 1 \alpha &
  \tfrac {r}{\alpha'\hat k}-\tfrac 1 \alpha & \tfrac {r}{\alpha'\hat k}-\tfrac 1 \alpha  \\
  \tfrac {r\hat k}{\alpha'}-1 & \tfrac {r\hat k}{\gamma'}-1 & r\hat k-1 & r\hat k-1 & r\hat k-1 & \tfrac {r}{\hat k\alpha'\beta'}-1 &
  \tfrac {r}{\alpha'\hat k}-1 & \tfrac {r}{\alpha'\hat k}-1  \\
 \tfrac  {r\hat k}{\alpha'}-1 & \tfrac {r\hat k}{\gamma'}-1 & r\hat k-1 & r\hat k-1 & r\hat k-1  & \tfrac {r}{\hat k\alpha'\beta'}-1 &
  \tfrac {r}{\alpha'\hat k}-1 & \tfrac {r}{\alpha'\hat k}-1 
  \end{bmatrix}.
\end{equation}
and $E_k$, 
with diagonal and off-diagonal $2\times 2$ blocks replaced by $K^v$ and
$ik_- S^a$ respectively. 
It is the diagonal $4\times 4$ blocks in \eqref{eq:prediracintop3d}
which are our main concern, within which we note that the diagonal
$2\times 2$ blocks are weakly singular operators on smooth domains.

\begin{itemize}
\item Our first choice is to set $r=1/\hat k$,  $\alpha\beta=\hat k^2$ and
  $\alpha'\beta'=1/\hat k^2$.  This gives cancellation in the (1:2,3:4) and
  (7:8,5:6) size $2\times 2$ blocks of $T$, which for a smooth domain
  $\Omega$ yields  a nilpotent operator $T$ with essential spectrum 
  $\{0\}$, if $\mH_3$ is replaced by a function space of fixed 
  regularity. 
  
\item It remains to choose $\gamma, \alpha',\gamma'$. The choice of
  $\alpha'$ concerns the diagonal elements (1,1), (7,7) and (8,8). We
  set $\alpha'= 1/\hat k$, and so $\beta'=1/\hat k$. The choices of
  $\gamma, \gamma'$ concern the diagonal elements (5,5) and (2,2). We
  set $\gamma= \hat k/|\hat k|$ and $\gamma'= \rev{\hat k}/|\hat k|$.
  
  These choices make $\beta/\hat k=(\alpha/\hat k)^{-1}$, $\alpha'\hat
  k=\beta' \hat k=1$, $\gamma' \hat k>0$ and $\gamma/\hat k>0$.
  Therefore, if $\alpha, \hat k^2/\alpha\in \C\setminus (-\infty,0]$
  and $\alpha/\hat k\in\swp(k_-,k_+)$, then
  Propositions~\ref{prop:diractrans3d} and \ref{prop:diracintwp3d}
  guarantee invertibility of $rE_{k_+}^+M'+M E_{k_-}^-$. Note that for
  non-magnetic materials $\hat k^2/\alpha=\beta=\hat \mu=1$. Similar
  to the situation in $\R^2$, we also obtain good invertibility
  properties of the (1,1), (2,2), (5,5) diagonal
  elements,  as well as the (7:8,7:8) diagonal block,  
  on any Lipschitz domain in this way.
  Furthermore, with $\hat\mu=1$ we also obtain good invertibility
  properties for the (3,3) and (4,4) diagonal elements, so that it is
  only the (6,6) element which we do not control 
   in the sense that we may hit its essential spectrum for
  $\hat\epsilon <0$. 
\end{itemize}

To summarize, for solving the Maxwell transmission problem 
\eqref{eq:maxwtransprclassi}, we have obtained the 3D Dirac integral equation
\begin{equation}  \label{eq:3DfinalDirac}
  (\hat k^{-1}M'+M+  E_{k_+} (\hat k^{-1}M')-M E_{k_-})\tilde h= 2M f^0,
\end{equation}
with 
\begin{align}
  M&= \diag\begin{bmatrix} 1/\hat k & 1/\hat k & \hat k/\hat\epsilon & \hat k/\hat\epsilon & \rev{\hat k}/|\hat k| & 1/\hat\epsilon & 1 & 1 \end{bmatrix},\\
  \hat k^{-1}M'&= \diag\begin{bmatrix} 1 & 1/|\hat k| & 1/\hat k & 1/\hat k & 1 & 1 & 1/\hat k & 1/\hat k \end{bmatrix}.
\end{align}

\section{Numerical results for the 2D Dirac integral equation}
\label{sec:numerical}

This section shows how the 2D Dirac integral equation
\eqref{eq:2DDiracequ}, along with the field representation formula
\eqref{eq:ufield2d}, performs numerically when applied to the planar
Helmholtz/TM Maxwell transmission problem. For comparison, we also
investigate the performance of the $4\times 4$ system of integral
equations \cite[Eq.~(12.4)]{HelsingKarlsson:20}, along with its
field representation formulas \cite[Eqs.~(12.2) and~(12.3)]{HelsingKarlsson:20}. For simplicity, we refer to the
system~\eqref{eq:2DDiracequ} as ``Dirac'' and to the
system~\cite[Eq.~(12.4)]{HelsingKarlsson:20} as ``HK 4-dens''.
The reason for comparing with ``HK 4-dens'' is that this system, just
like ``Dirac'', has a $8\times 8$ counterpart in 3D which applies to
Maxwell's equations.

We also compare to a state-of-the-art $2\times 2$ system of integral
equations~\cite[Eq.~(12.7)]{HelsingKarlsson:20} of
Kleinman--Martin type~\cite{KleiMart88} and to the 2D version of the
classic M{\"u}ller system \cite[p. 319]{Muller69}, which also is a
$2\times 2$ system~\cite[Sec.~14.1]{HelsingKarlsson:20}, and
refer to these systems as ``best KM-type'' and ``2D M{\"u}ller''.
While ``best KM-type'' is limited to planar problems it yields, in
general, the best results. For interior wavenumbers $\arg(k_+)=0$,
``best KM-type'' coincides with ``2D M{\"u}ller''.

 We stress, at this point, that the purpose of the numerical tests
is to verify that ``Dirac'' has the properties claimed in the
theoretical sections of this paper. We perform these tests in 2D
simply because we have not yet access to a high-order accurate solver
for ``Dirac'' in 3D. For example, we monitor condition numbers under
wavenumber sweeps in order to detect eigenwavenumbers. For scattering
problems, we investigate achievable field accuracy and the convergence
of iterative solvers. We are not trying to show that ``Dirac'' is more
efficient than ``best KM-type'' in 2D, because it is not. Rather, we
demonstrate that ``Dirac'' is almost as efficient as ``best KM-type''
in 2D and often more efficient than ``HK 4-dens''.  This is of
interest because ``Dirac'' is also applicable in 3D, while ``best
KM-type'' is not.

All systems of integral equations are discretized using Nystr{\"o}m
discretization and underlying composite $16$-point Gauss--Legendre
quadrature.  The reason for using Nystr{\"o}m discretization,
rather than the more common Galerkin discretization (BEM or MoM), has
to do with accuracy and speed. High achievable accuracy and fast
execution is, in our opinion, more easily obtained in a Nystr{\"o}m
setting. This applies not only in 2D, but also to scattering problems
involving rotationally symmetric interfaces $\partial\Omega$ in 3D.
See, for example,~\cite[Sec.~VII.A]{HelsingKarlsson:17}, where
double-precision results, obtained with a high-order
Fourier--Nystr{\"o}m method, agree to almost $14$ digits with
semi-analytical solutions given by Mie theory for an electromagnetic
transmission eigenproblem on the unit sphere. See
also~\cite{EpsteinGreengardONeil:19,LaiOneil:19} for other recent
examples where authors prefer Nystr{\"o}m schemes over Galerkin for
electromagnetic scattering. The discussion of Nystr{\"o}m versus
Galerkin discretization of integral equations in computational
electromagnetics in~\cite[Sec.~1]{LaiOneil:19} is very informative.

On smooth $\bdy\Omega$ we use a variant of the ``Nystr{\"o}m scheme
B'' in~\cite{HelsHols15}. In the presense of singular boundary points
on $\bdy\Omega$ such as corner vertices, which would cause performance
degradation in a naive implementation, the Nystr{\"o}m scheme is
stabilized and accelerated using recursively compressed inverse
preconditioning (RCIP)~\cite{Hels18}.  We note, in passing, that
RCIP is applicable also to Fourier--Nystr{\"o}m discretization near
sharp edges and singular boundary points on rotationally symmetric
$\partial\Omega$ in 3D~\cite{HelsingKarlsson:16,HelsingPerfekt:18}.
 Accurate evaluation of singular operators on $\bdy\Omega$ and
accurate field evaluation of layer potentials at field points close to
$\bdy\Omega$ are accomplished using panel-wise product integration.
See~\cite[Sec.~4]{HelsingKarlsson:18}, and references therein, for
details.  See also~\cite[Sec.~9.1]{HelsingKarlsson:18} for a
thorough test of the implementation of the integral operators needed
in ``Best KM-type'' and ``2D M{\"u}ller'' via Calder{\'o}n identites
and see~\cite[Sec.~9]{HelsHols15} for a high-wavenumber test of the
implementation of the corresponding layer potentials needed for field
evaluations.  Large discretized linear systems are solved
iteratively using GMRES ( without restart).

Our codes are implemented in {\sc Matlab}, release 2018b, and executed
on a workstation equipped with an Intel Core i7-3930K CPU and 64 GB of
RAM. Fields are computed at $10^6$ points on a rectangular Cartesian
grid in the computational domains shown. When assessing the accuracy
of computed fields we compare to a reference solution. The reference
solution is either obtained from a system deemed to give more accurate
solutions, or by overresolution using roughly 50\% more points in the
discretization of the system under study. The absolute difference
between the original solution and the reference solution is called the
{\it estimated absolute error}.

\begin{figure}[t]
\centering
  \begin{subfigure}[b]{0.49\linewidth}
     \includegraphics[height=55mm]{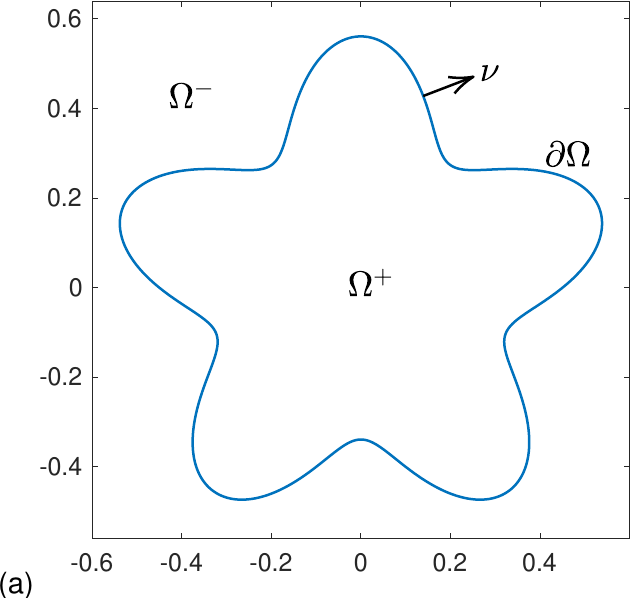}
  \end{subfigure}
  \begin{subfigure}[b]{0.49\linewidth}
     \includegraphics[height=55mm]{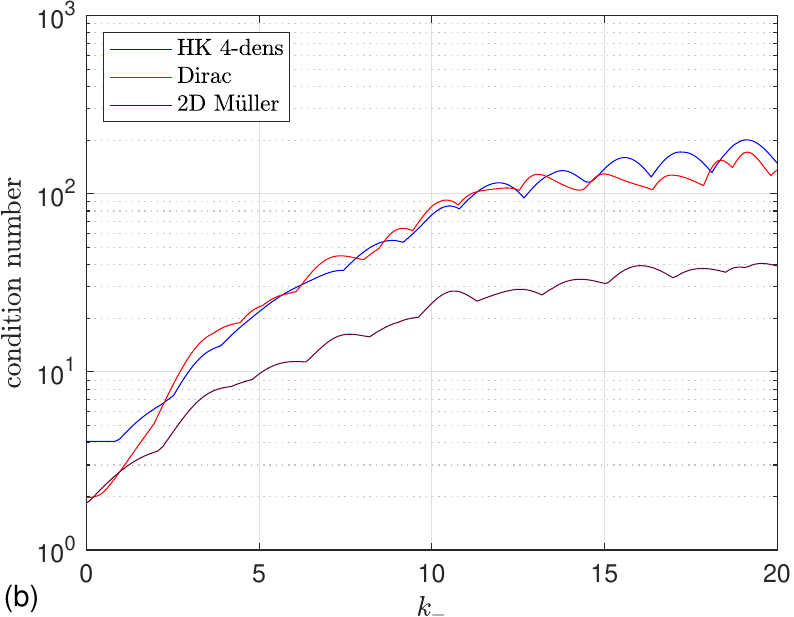}
  \end{subfigure}
\par\bigskip
  \begin{subfigure}[b]{0.49\linewidth}
     \includegraphics[height=55mm]{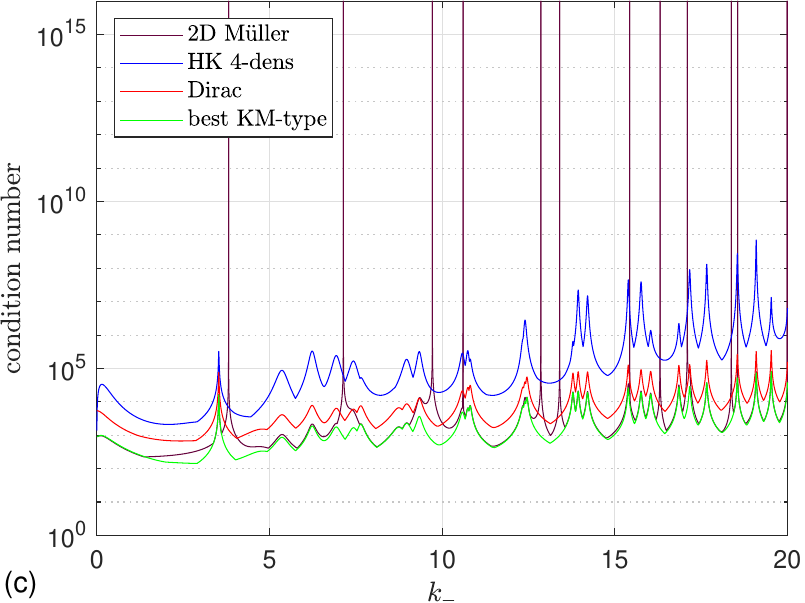}
  \end{subfigure} 
  \begin{subfigure}[b]{0.49\linewidth}
     \includegraphics[height=55mm]{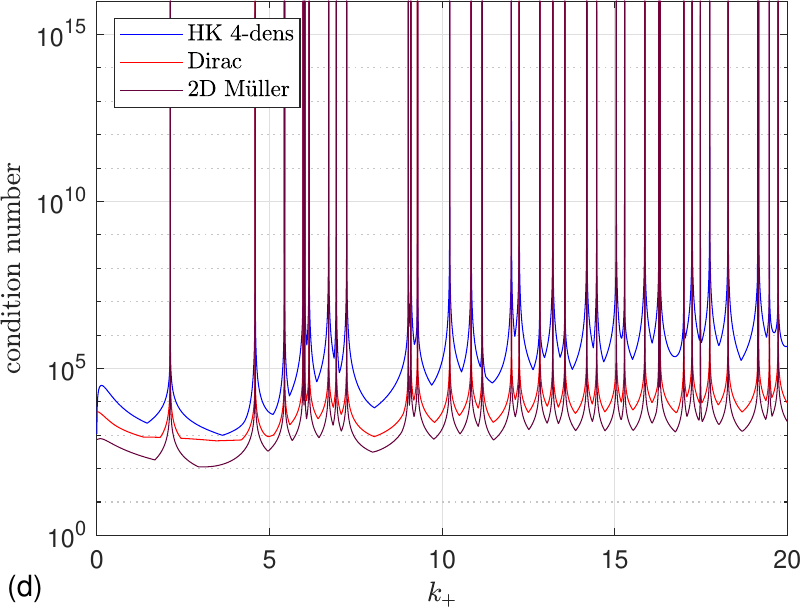}
  \end{subfigure}
\caption{\sf Condition numbers of the operators in ``Dirac'', in 
  ``HK 4-dens'', in ``best KM-type'', and in ``2D M{\"uller}'': (a) the
  starfish-like interface $\bdy\Omega$; (b) the positive dielectric
  case; (c) the plasmonic case; (d) the reverse plasmonic case.}
\label{fig:starfishcondition}
\end{figure}

\subsection{The operators}   \label{subseq:ops}

We compute condition numbers of the discretized system matrices in the
systems under study.  The main  purpose is to detect false
eigenwavenumbers. Another purpose is to compare the conditions number
of the system matrices with each other. The interface $\bdy\Omega$ is
the smooth starfish-like curve \cite[eq.~(92)]{HelsingKarlsson:18},
shown in Figure~\ref{fig:starfishcondition}(a) and originally
suggested for scattering problems in \cite{HaBaMaYo14}. A number of
$976$ discretization points are placed on $\bdy\Omega$. We study three
cases of jumps $\hat k$ where we vary $k_-$ or $k_+$:
\begin{itemize}
\item {\it The positive dielectric case}. The exterior wavenumber is
  positive real, $0<k_-\le 20$, and $\hat k=k_+/k_-= 1.5$ so that
  $0<k_+\le 30$ and $\hat\epsilon=2.25$. This corresponds to the lower
  left corner point in Figure~\ref{fig:hexagon}, where
  Theorem~\ref{thm:2Ddiracint} guarantees that no true
  eigenwavenumbers exist, as well as no false eigenwavenumbers for
  ``Dirac''.
  
  Figure~\ref{fig:starfishcondition}(b) shows that ``Dirac'' and ``HK
  4-dens'' perform equally well, except at low frequencies where the
  condition number of ``Dirac'' fares better and is comparable to that
  of ``2D M{\"u}ller''.
  
\item {\it The plasmonic case}. Again the exterior wavenumber is
  positive real, $0<k_-\le 20$, but $\hat k=k_+/k_-= i\sqrt{1.1838}$
  so that $0<k_+/i\le 20\sqrt{1.1838}$, $k_+$ is imaginary, and
  $\hat\epsilon=-1.1838$ is rather close to the essential spectrum
  $\{-1\}$. This corresponds to the left middle corner point in
  Figure~\ref{fig:hexagon}, where Theorem~\ref{thm:2Ddiracint}
  guarantees that no true eigenwavenumbers exist, as well as no false
  eigenwavenumbers for ``Dirac''.
  
  Figure~\ref{fig:starfishcondition}(c) shows that the condition
  number of ``Dirac'' is closer to that of ``best KM-type'' than to
  ``HK 4-dens'', in particular at high frequencies. ``2D M{\"u}ller''
  exhibits $12$ false eigenwavenumbers.
  
\item {\it The reverse plasmonic case}. Now the interior wavenumber is
  positive real, $0<k_+\le 20$, and $\hat k=k_+/k_-=
  (i\sqrt{1.1838})^{-1}$ so that $0<k_-/i\le 20\sqrt{1.1838}$, $k_-$
  is imaginary, and  $\hat\epsilon=-1/1.1838$.  This corresponds to the
  lower middle corner point in Figure~\ref{fig:hexagon}, which does
  not belong to the hexagon, and indeed
  Figure~\ref{fig:starfishcondition}(d) shows $37$ true
  eigenwavenumbers. The different systems here have a relative
  performance similar to that of the plasmonic case, with ``Dirac''
  performing rather close to ``2D M{\"u}ller''.
\end{itemize}

\subsection{Field computations}  
\label{subsec:fieldplot}

We solve the Dirac system~\eqref{eq:2DDiracequ} for $h$, compute
interior and exterior fields $U^\pm$ via \eqref{eq:ufield2d}, and
compare with results from the other systems. Gradient fields $\nabla
U^\pm$ are not computed, but we remark that the representation formula
\eqref{eq:ugradfield2d} for $\nabla U^\pm$ uses layer potentials with
the same type of (near-logarithmic and near-Cauchy) singular kernels
as the representation formula \eqref{eq:ufield2d}. The $2\times 2$
systems, on the other hand, have accompanying representation formulas
for $\nabla U^\pm$ with layer potentials that contain
near-hypersingular kernels. Evaluation of these layer potentials at
field points near $\bdy\Omega$ may cause additional loss of accuracy.

The curve $\bdy\Omega$ is chosen as the boundary of the one-corner
drop-like object \cite[Eq.~(92)]{HelsingKarlsson:18}, discernible in
Figure~\ref{fig:fieldspositive}(a). We let $k_-=18$ in the positive
dielectric and plasmonic cases, and $k_+=18$ in the reverse plasmonic
case. The incoming wave is a plane wave from south-west, $u^0(x,y)=
e^{ik_-(x+y)/\sqrt 2}$, and $800$ discretization points are placed on
$\bdy\Omega$. When $\hat\epsilon=-1.1838$, then
$\pm(\hat\epsilon+1)/(\hat\epsilon-1)$ is in the essential
$H^{1/2}(\bdy\Omega)$-spectrum of \eqref{eq:doublelayer} and a
homotopy-based numerical procedure is adopted where
$\hat\epsilon=-1.1838$ is approached from above in the complex
plane~\cite[Sec. 6.3]{Hels11}.

\begin{figure}[t]
  \centering 
  \includegraphics[height=50mm]{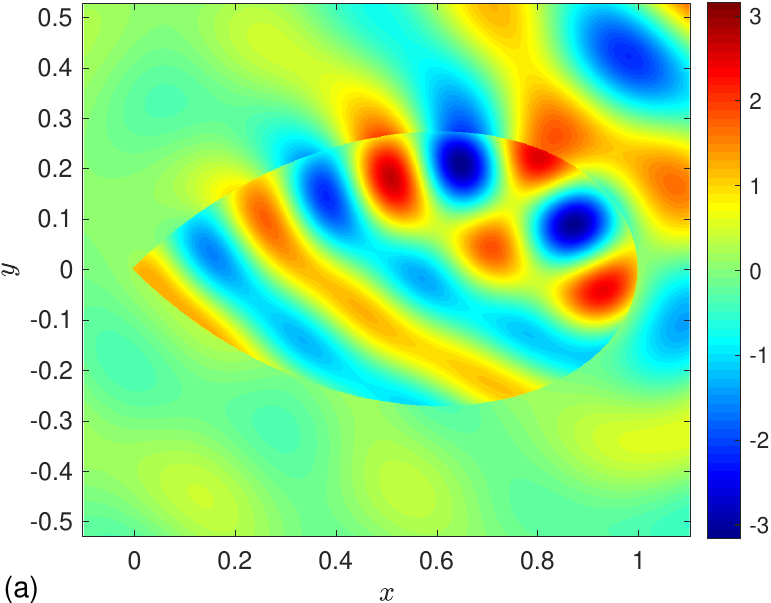}
  \includegraphics[height=50mm]{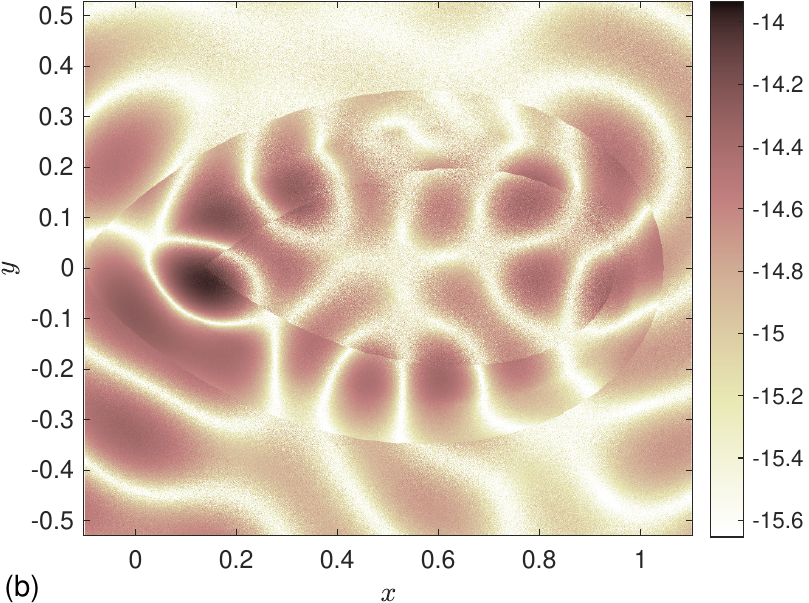}
  \par\bigskip
  \includegraphics[height=50mm]{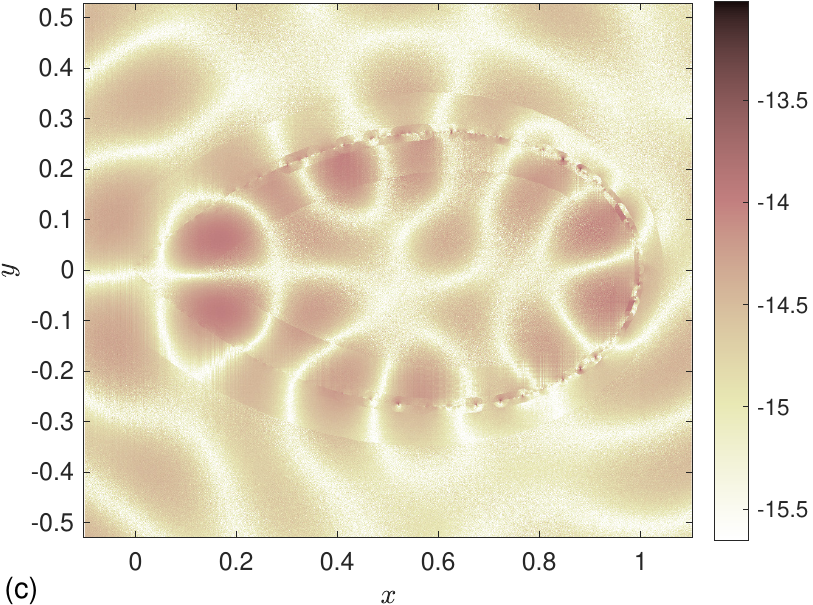}
  \includegraphics[height=50mm]{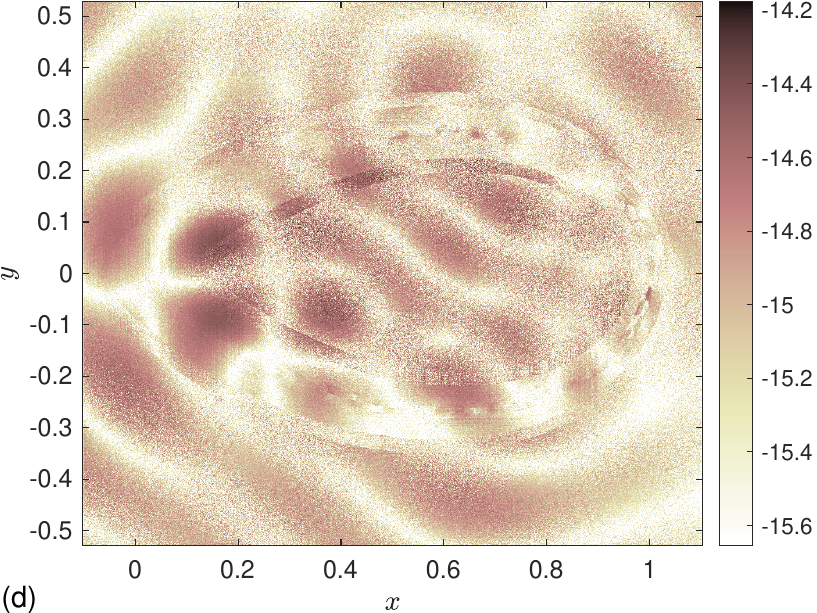}
\caption{\sf Field computation in the positive dielectric case;
  (a) the scattered fields $\re U^\pm$; (b) $\log_{10}$ of the
  estimated absolute error using ``HK 4-dens''; (c) same using
  ``Dirac''; (d) same using ``2D M{\"u}ller''.}
\label{fig:fieldspositive}
\end{figure}

\begin{figure}[t]
  \centering
  \includegraphics[height=50mm]{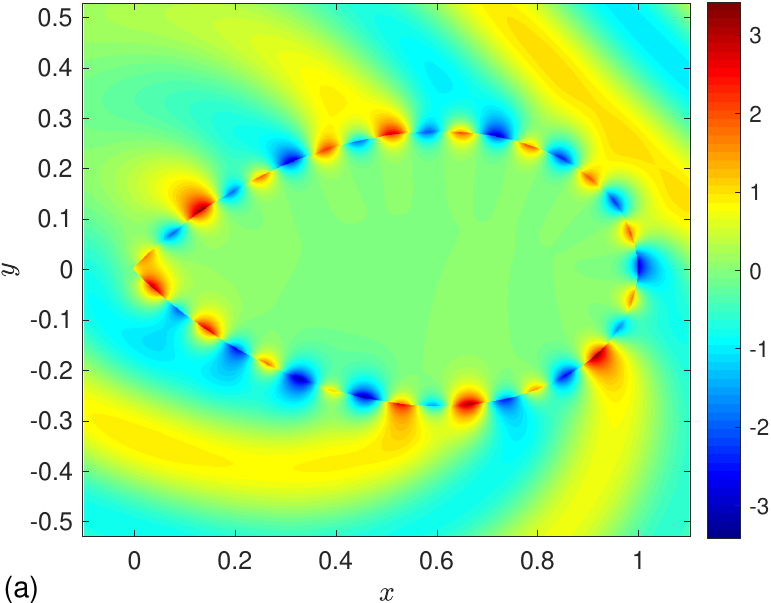}
  \includegraphics[height=50mm]{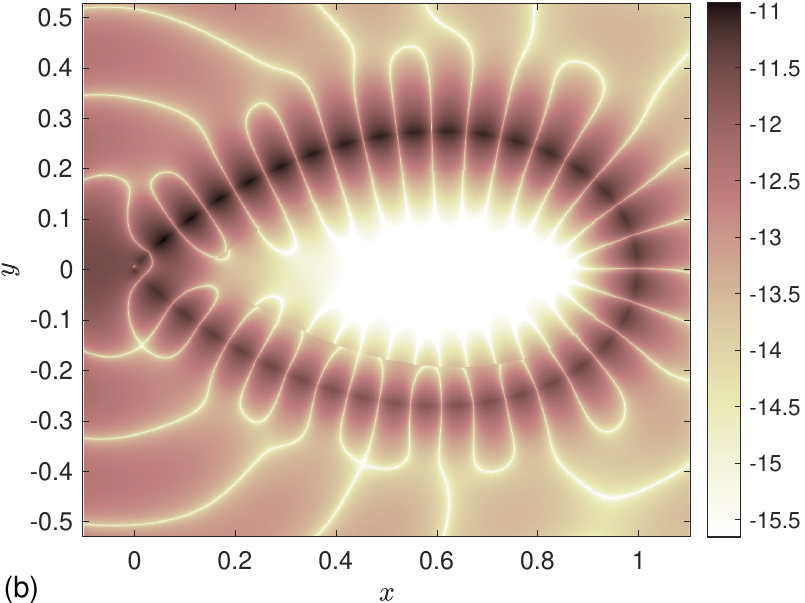}
  \par\bigskip
 \includegraphics[height=50mm]{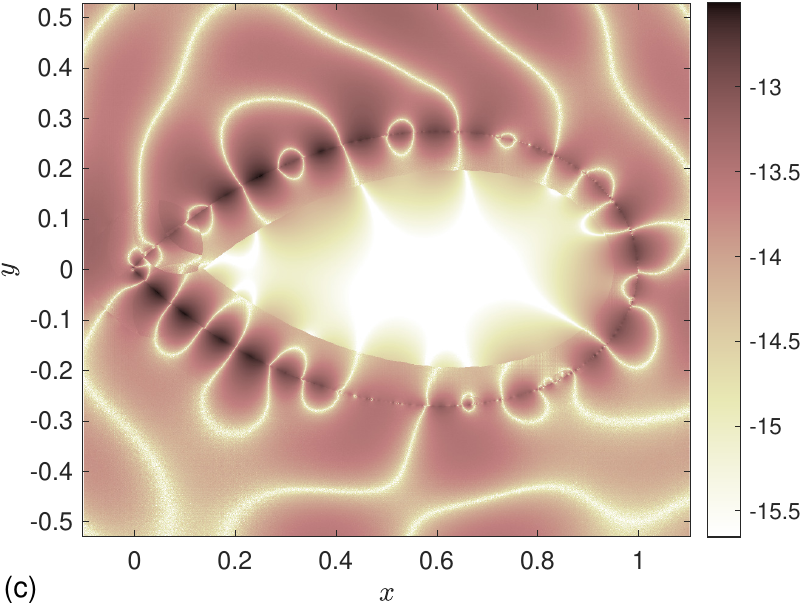}
  \includegraphics[height=50mm]{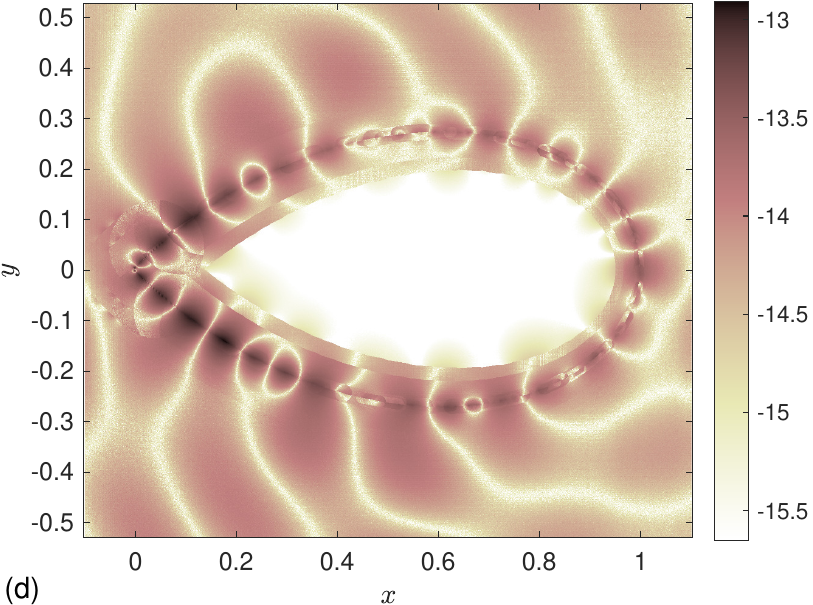}
\caption{\sf Field computation in the plasmonic case;
  (a) the scattered fields $\re U^\pm$; (b) $\log_{10}$ of the
  estimated absolute error using ``HK 4-dens''; (c) same using
  ``Dirac''; (d) same using ``best KM-type''.}
\label{fig:fieldsplasmon}
\end{figure}

\begin{figure}[t]
  \centering
  \includegraphics[height=50mm]{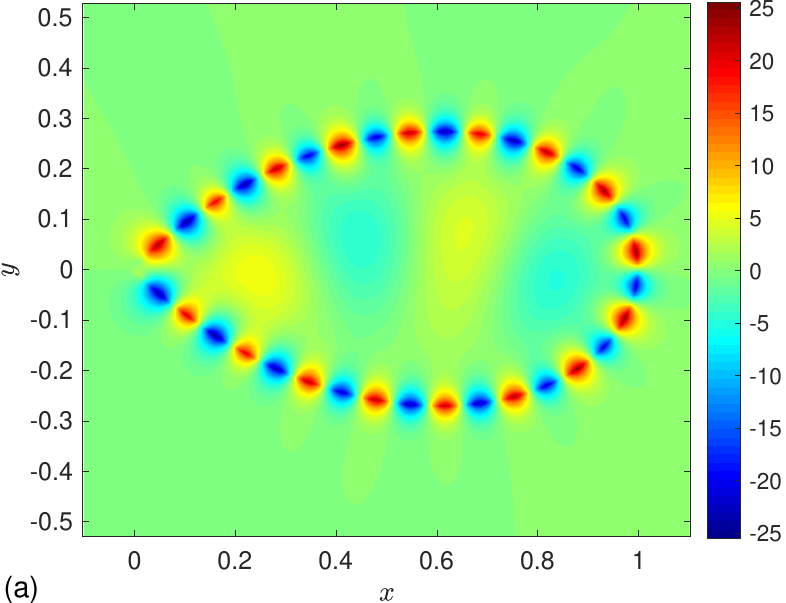}
  \includegraphics[height=50mm]{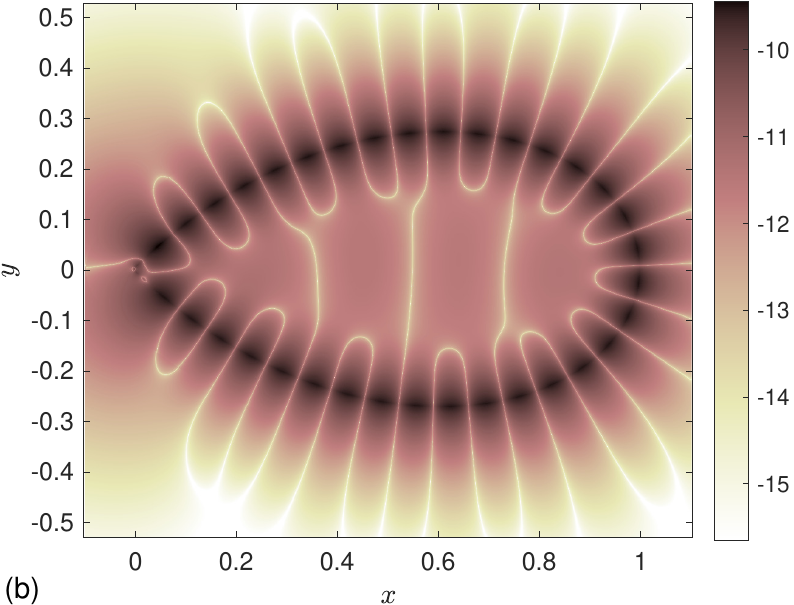}
  \par\bigskip
  \includegraphics[height=50mm]{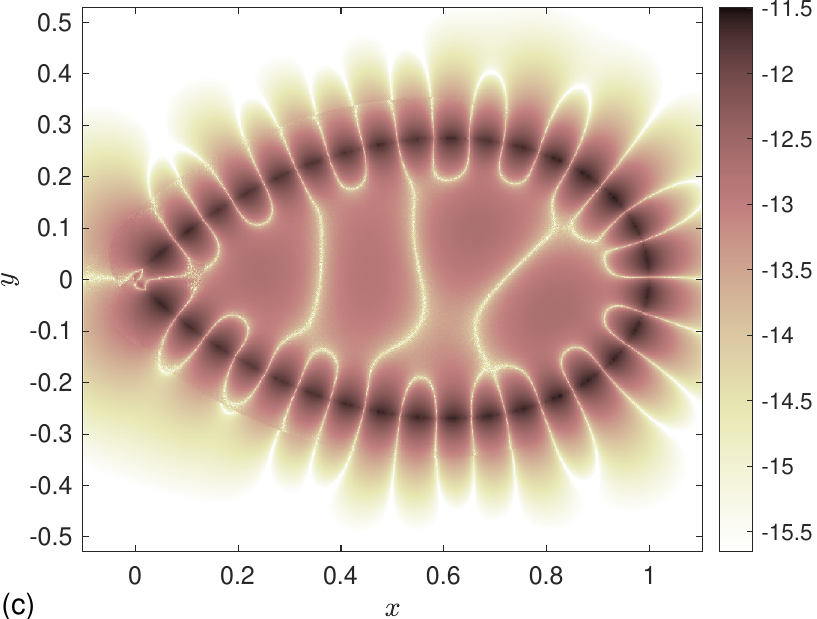}
  \includegraphics[height=50mm]{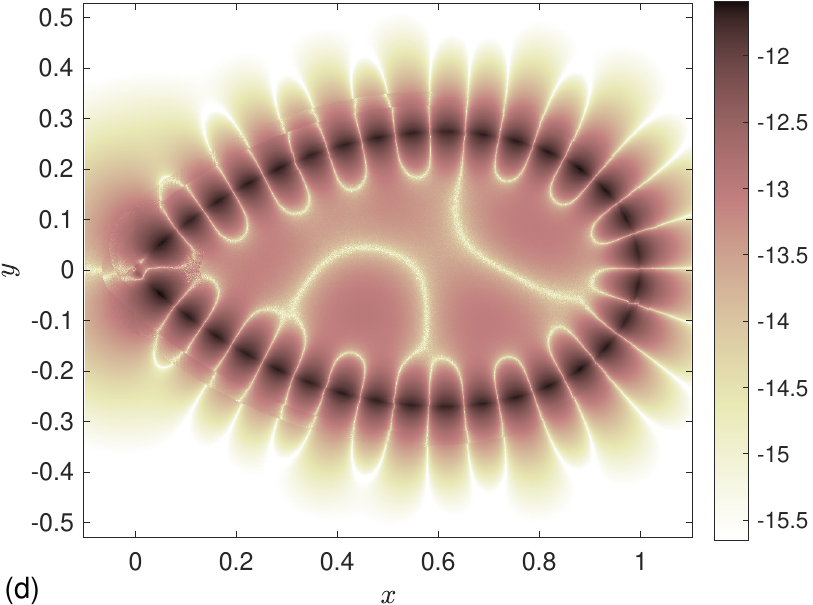}
\caption{\sf Field computation in the reverse plasmonic case;
  (a) the scattered fields $\re U^\pm$; (b) $\log_{10}$ of the
  estimated absolute error using ``HK 4-dens''; (c) same using
  ``Dirac''; (d) same using ``2D M{\"u}ller''.}
\label{fig:fieldsantiplasmon}
\end{figure}

\begin{itemize}
\item Figure~\ref{fig:fieldspositive} covers the positive dielectric
  case. The real parts of the scattered fields $U^\pm$ are shown in
  Figure \ref{fig:fieldspositive}(a). There are propagating waves in
  both $\Omega^+$ and $\Omega^-$. The remaining images show
  $\log_{10}$ of the estimated absolute pointwise error in $\re
  U^\pm$, computed from the different systems. ``Dirac'' loses one
  digit of accuracy in some regions near $\bdy\Omega$ compared to the
  other systems. Most likely, this is because \eqref{eq:ufield2d} does
  not exploit null-fields in the near-field evaluation. See
  \cite[Sec.~7]{HelsingKarlsson:20}.
  
  The number of GMRES iterations needed to meet a stopping criterion
  threshold of machine epsilon in the relative residual are
  $61$, $65$, and $44$ for ``HK 4-dens'', ``Dirac'', and ``2D
  M{\"u}ller'', respectively. In this case the operators in ``HK
  4-dens'' and ``Dirac'' seem to have similar spectral properties,
  while the spectral properties of the operator in ``2D M{\"u}ller''
  are better.
  
\item Figure~\ref{fig:fieldsplasmon} covers the plasmonic case and is
  organized as Figure~\ref{fig:fieldspositive}. There are propagating
  waves in $\Omega^-$, exponentially decaying waves into $\Omega^+$,
  and a surface plasmon wave along $\bdy\Omega$. ``Dirac'' here
  performs almost on par with ``best KM-type'' and gives $2$ more
  accurate digits than ``HK 4-dens''.
  
  The number of GMRES iterations needed are $266$, $173$, and $143$
  for ``HK 4-dens'', ``Dirac'', and ``best KM-type'', respectively.
  In this case the operator in ``Dirac'' seems to have considerably
  better spectral properties than the operator in ``HK 4-dens''.
  
\item Figure~\ref{fig:fieldsantiplasmon} covers the reverse plasmonic
  case. The results are similar to those of the plasmonic case,
  although a digit is lost with all systems and we have propagating
  waves in $\Omega^+$ and exponentially decaying waves into
  $\Omega^-$. ``Dirac'' performs almost on par with ``2D M{\"u}ller''
  and gives $2$ more accurate digits than than ``HK 4-dens''.
\end{itemize}

\subsection{Densities and function spaces}

We show asymptotics of the density  $h=[h_1\; h_2\; h_3\; h_4]^{\rm T}$ 
obtained from the Dirac integral equation \eqref{eq:2DDiracequ}. The
computations relate to the three examples for the drop-like object in
Section~\ref{subsec:fieldplot}.

\begin{figure}[t!]
  \centering
  \includegraphics[height=58mm]{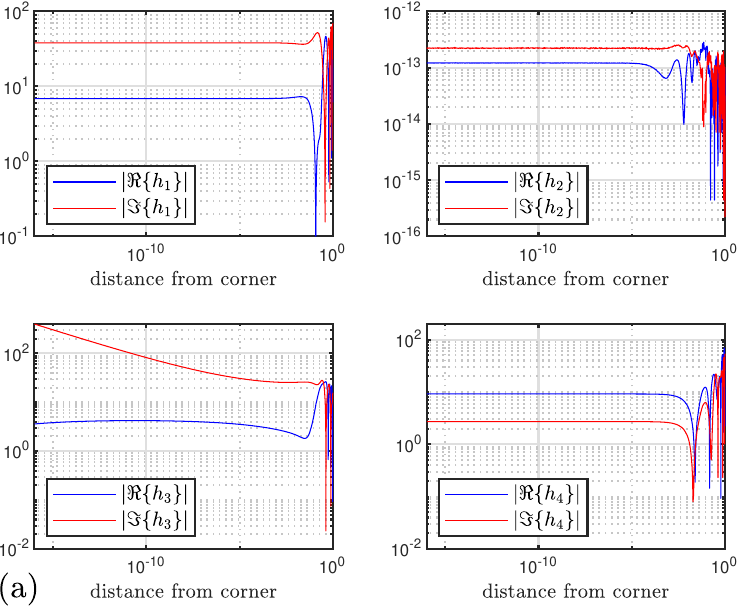}
  \hspace*{5mm}
  \includegraphics[height=58mm]{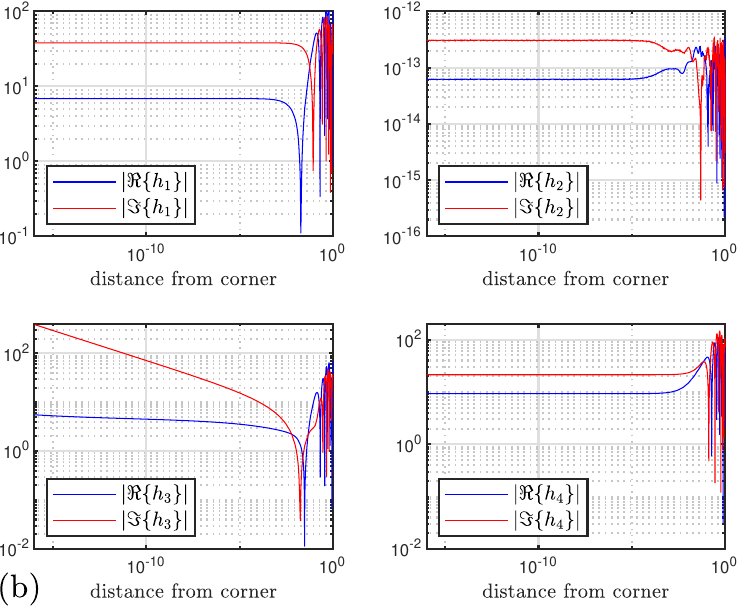}
  \caption{\sf The densities $h_1,h_2,h_3,h_4$ as functions of the 
    arc length distance to the corner vertex in the positive
    dielectric case: (a) along the lower part of $\bdy\Omega$; (b)
    along the upper part of $\bdy\Omega$.}
  \label{fig:densitypositive}
\end{figure}

\begin{figure}[h!]
  \centering
 \includegraphics[height=58mm]{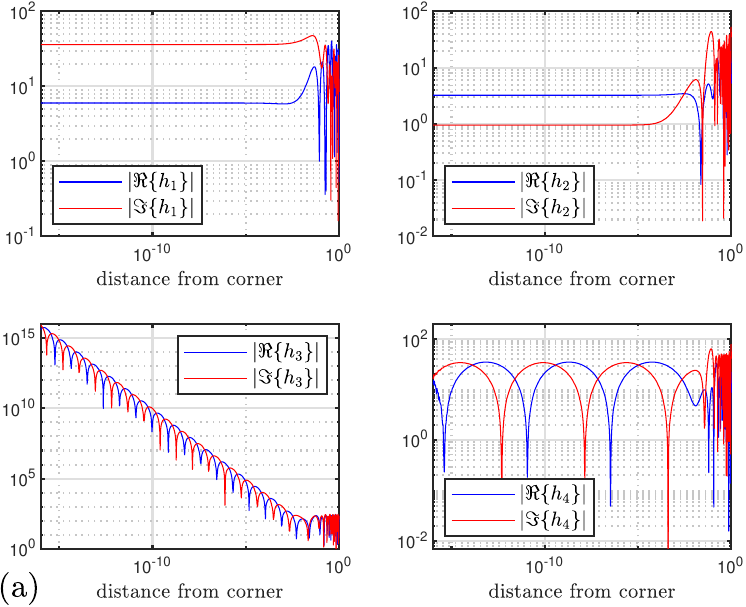}
  \hspace*{5mm}
 \includegraphics[height=58mm]{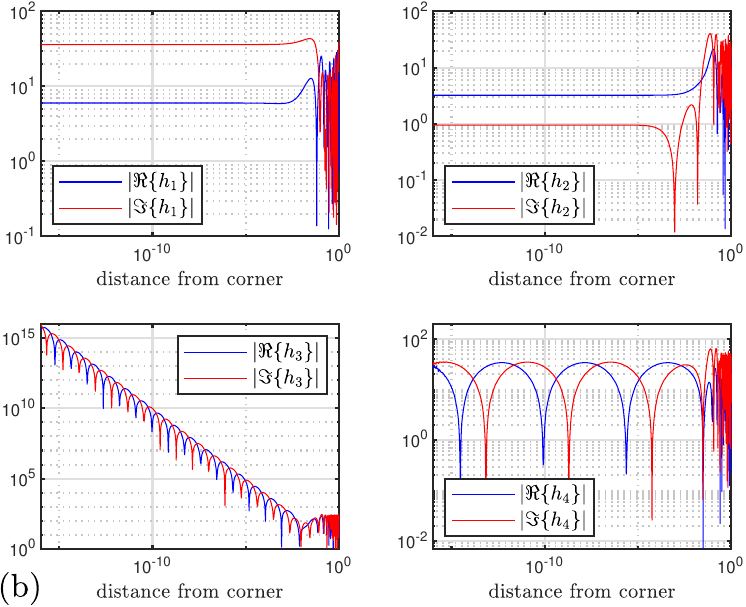}
  \caption{\sf Same as Figure~\ref{fig:densitypositive}, but for
    the plasmonic case.}
  \label{fig:densityplasmon}
\end{figure}

\begin{figure}[h!]
  \centering
 \includegraphics[height=56mm]{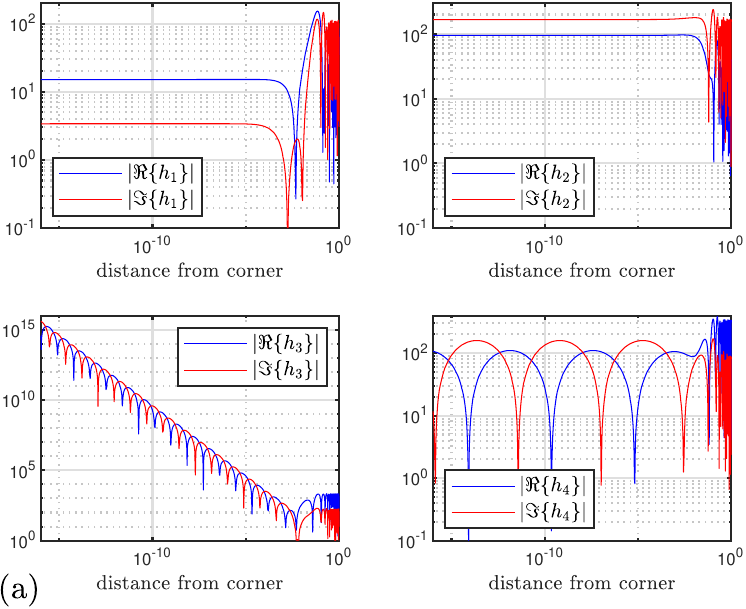}
  \hspace*{5mm}
  \includegraphics[height=56mm]{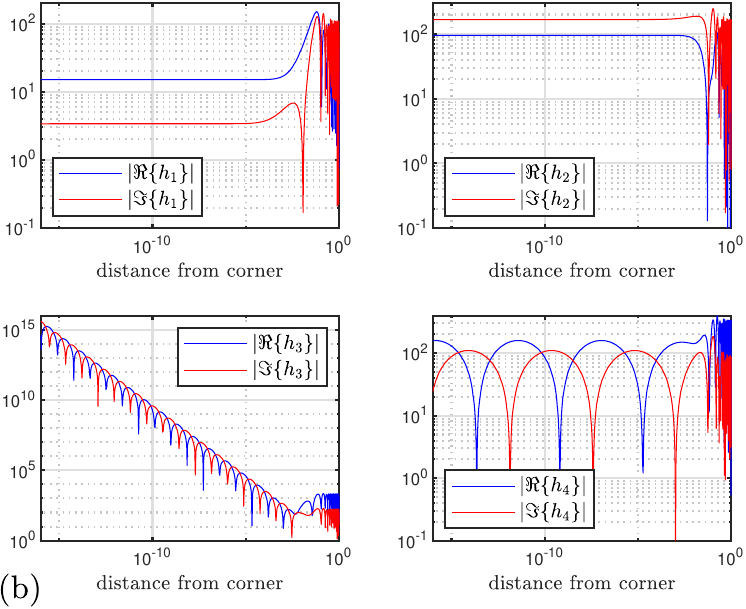}
  \caption{\sf Same as Figure~\ref{fig:densitypositive}, but for
    the reverse plasmonic case.}
  \label{fig:densityantiplasmon}
\end{figure}

\begin{itemize}
\item The positive dielectric case: Here the hypothesis on
  $\hat\epsilon$ and $\hat k$ in Theorem~\ref{thm:2Ddiracint} is
  satisfied. Then $h$ belongs to the energy function space $\mH_2$
  from \eqref{eq:H2space}, meaning that $h_1,h_2\in
  H^{1/2}(\bdy\Omega)$ and $h_3,h_4\in H^{-1/2}(\bdy\Omega)$. The
  result is shown in Figure~\ref{fig:densitypositive}, where indeed
  $h_1$ and $h_2$ are seen to be continuous at the corner vertex (note
  that $h_2\approx 0$). The only singular density is $h_3$, which is
  related to that it is only the $(3,3)$ diagonal element in the
  $4\times 4$ block-operator of \eqref{eq:2DDiracequ} which we do not
  control by the choices of parameters $r,\beta,\alpha',\beta'$ in
  Section~\ref{sec:diracint2d}. Using the automated eigenvalue method
  of~\cite[Sec.~14]{Hels18}, the asymptotic behaviour of $h_3$ near
  the corner is determined to be
  \begin{equation}
  h_3(t)\propto t^\eta,
  \label{eq:asympth}
  \end{equation}
  where $\eta=-0.12319432456634$ and $t$ is the arc length distance
  to the corner vertex. So $h_3$ is in fact in $L_2(\bdy\Omega)$.
  
\item The plasmonic case: Here the hypothesis on $\hat\epsilon$ and
  $\hat k$ in Theorem~\ref{thm:2Ddiracint} is not satisfied since
  $\hat\epsilon<0$ makes $\pm(\hat\epsilon+1)/(\hat\epsilon-1)$ hit
  the essential $H^{-1/2}(\bdy\Omega)$-spectrum of
  \eqref{eq:doublelayer}. Nevertheless, the RCIP-accelerated
  Nystr{\"o}m scheme manages to produce the limit solution $h$ shown
  in Figure~\ref{fig:densityplasmon}. As in the positive dielectric
  case, the densities $h_1,h_2,h_4$ are good, although $h_4$ exhibits
  an oscillatory behaviour. However $h_3\notin H^{-1/2}(\bdy\Omega)$.
  More precisely, its asymptotics near the corner are as
  in~\eqref{eq:asympth} with
  $\eta=-1.00000000000000-i1.57105873276994$. So $h_3\in
  H^{-s}(\bdy\Omega)$ for any $s>1/2$.
  
\item The reverse plasmonic case: the results, shown in
  Figure~\ref{fig:densityantiplasmon}, are very similar to those of
  the plasmonic case. The asymptotics of $h_3$ are as in
  \eqref{eq:asympth} with $\eta=-1.00000000000000+i1.57105873276994$.

\end{itemize}

We end with a remark on the densities $h$ obtained in the plasmonic
and reverse plasmonic cases, which fall outside the energy trace space
$\mH_2$ from \eqref{eq:H2space}. More generally, this energy trace
space belongs to a family of function spaces, where Sobolev regularity
$s=1/2$ and $s-1=-1/2$ is replaced by a more general regularity index
$s$. On Lipschitz domains, the possible range is $0\le s\le 1$. In the
plasmonic cases, our computed densities $h$ belong to the larger
spaces $s<1/2$. For $0\le s<1$, the corresponding norms of the fields
are weighted Sobolev norms using $\int |\nabla U^\pm(x)|^2
\text{dist}(x)^{1-2s}dx$, where $\text{dist}(x)$ denotes distance from
$x$ to $\bdy\Omega$, whereas for the endpoint $s=1$, this must be
replaced by a norm involving a non-tangential maximal function. A
reference for the elementary results for $0<s<1$ is
Costabel~\cite{Costabel:88}. The bounds on double layer potential
operators for the endpoints $s=0,1$ require harmonic analysis and
the Coifman--McIntosh--Meyer theorem~\cite{CMcM:82}.

The essential spectrum of the double layer potential operator
\eqref{eq:doublelayer} depends on the choice of function space, that
is on $s$. For the energy trace space $s=1/2$ this spectrum is a
subset of the real interval $(-1,1)$, but in the endpoint spaces
$s=0,1$, as alluded to in the introduction, this spectrum can be
computed to be a lying ``figure eight'', parametrized as
\begin{equation}   \label{eq:eight}
  \pm\sin(\delta\pi(1+i\xi)/2)/\sin(\pi(1+i\xi)/2), \qquad \xi\in\R,
\end{equation}
where $\delta=\theta/\pi-1$ on a domain with a corner of opening angle
$\theta$.

The point of departure for the investigations reported on in this
paper was the spin integral equation proposed in \cite[Sec.
5]{RosenBoosting:19} for solving the Maxwell transmission problem
\eqref{eq:maxwtransprclassi}. However, it was soon realized that this
was not suitable for the plasmonic and reverse plasmonic cases.
Indeed, the theory developed for  this spin integral equation  makes
use of non-diagonal matrices $P,P',N, N'$ in \eqref{eq:2DDiracequ}
which mix $H^{\pm 1/2}(\bdy \Omega)$ and is limited to the function
space  $L_2(\bdy \Omega)^4$, which is different from $\mH_2$.  
As we have
seen, surface plasmon waves appear in the  function space $\mH_2$
or, in the case of pure meta materials, in
the larger function spaces
$s<1/2$. The numerical algorithm used here
fails for the spin integral equation when
the ``figure eight'' of \eqref{eq:eight} is approached. 
When
$\hat\epsilon= -1.1838$ is approached from above in the complex plane,
this happens near $\hat\epsilon=-1.1838+i0.2168$.

\section*{Acknowledgement}

\noindent
We thank Anders Karlsson for many useful discussions. This work was
supported by the Swedish Research Council under contract
2015-03780.

\bibliographystyle{acm}

\end{document}